\pdfoutput=1
\documentclass[12pt,a4paper]{amsart}
\usepackage[a4paper,inner=2.5cm,outer=2.5cm,top=2.5cm,bottom=2.5cm]{geometry}
\usepackage{amsmath,amssymb,amsthm,enumerate,mathtools,stmaryrd
}
\usepackage{hyperref}
\hypersetup{colorlinks=true,linkcolor=blue,citecolor=teal,filecolor=magenta,urlcolor=cyan}

\usepackage[oldstyle]{libertinus}
\usepackage{libertinust1math}

\usepackage[T1]{fontenc}
\usepackage[utf8]{inputenc}

\usepackage{makecell}
\usepackage{graphicx}

\usepackage{float}

\usepackage{extarrows}

\usepackage{xcolor}

\newcommand{\C}{\mathbb{C}}

\usepackage{url}

\usepackage[backend=bibtex,style=alphabetic,sorting=nyt,isbn=false,url=true,doi=false,maxalphanames=10,minalphanames=4,mincitenames=4,maxcitenames=10,minnames=4,maxnames=10,giveninits=true,maxbibnames=99]{biblatex}
\renewbibmacro{in:}{\ifentrytype{article}{}{\printtext{\bibstring{in}\intitlepunct}}}
\AtEveryBibitem{\iffieldundef{eprint}{}{\clearfield{url}}}
\DefineBibliographyStrings{english}{page = {},pages = {}}
\DeclareFieldFormat{eprint:arxiv}{\footnotesize
	arXiv:\,\href{https://arxiv.org/abs/\thefield{eprint}}{#1}
}
\DeclareFieldFormat{url}{\footnotesize
	\iffieldundef{doi}{url:\,\url{\thefield{url}}}
	{doi:\,\href{https://doi.org/\thefield{doi}}{\nolinkurl{\thefield{doi}}}}
}

\theoremstyle{plain}
\newtheorem{theorem}{Theorem}[section]
\newtheorem{proposition}[theorem]{Proposition}
\newtheorem{corollary}[theorem]{Corollary}
\newtheorem{lemma}[theorem]{Lemma}

\theoremstyle{definition}
\newtheorem{definition}[theorem]{Definition}

\makeatletter
\@addtoreset{proofpart}{theorem}
\makeatother
\theoremstyle{remark}
\newtheorem{remark}[theorem]{Remark}

\def\oM{\overline{\mathcal{M}}}

\newcommand{\Z}{\mathbb{Z}}

\newcommand{\cS}{\mathcal{S}}

\newcommand{\res}{\mathop{\rm res}}

\newcommand\ocM{\overline{\mathcal M}}

\newcommand{\ba}{\mathbf{a}}

\newcommand{\set}[1]{\llbracket {#1} \rrbracket}
\newcommand{\III}{{\rm III}}

\newcommand{\fc}{f}
\newcommand{\Fc}{F}

\DeclareFontFamily{U}{rcjhbltx}{}
\DeclareFontShape{U}{rcjhbltx}{m}{n}{<->rcjhbltx}{}
\DeclareSymbolFont{hebrewletters}{U}{rcjhbltx}{m}{n}

\DeclareMathSymbol{\shin}{\mathord}{hebrewletters}{152}

\newcounter{pstep}

\title[New family of Hurwitz numbers]{A new family of weighted double Hurwitz numbers and a new ELSV-type formula with $\Omega$-classes}

\author[A.~Alexandrov]{A.~Alexandrov}
\address{A.~A.: Center for Geometry and Physics, Institute for Basic Science (IBS), Pohang 37673, Korea
}
\email{alex@ibs.re.kr}

\author[B.~Bychkov]{B.~Bychkov}
\address{B.~B.: Department of Mathematics, University of Haifa, Mount Carmel, 3498838, Haifa, Israel}
\email{bbychkov@hse.ru}

\author[P.~Dunin-Barkowski]{P.~Dunin-Barkowski}
\address{P.~D.-B.: Faculty of Mathematics, HSE University, Usacheva 6, 119048 Moscow, Russia; HSE--Skoltech International Laboratory of Representation Theory and Mathematical Physics, Skoltech, Bolshoy Boulevard 30 bld. 1, 121205 Moscow, Russia; and NRC “Kurchatov Institute” -- ITEP, 117218 Moscow, Russia}
\email{ptdunin@hse.ru}

\author[M.~Kazarian]{M.~Kazarian}
\address{M.~K.: Faculty of Mathematics, HSE University, Usacheva 6, 119048 Moscow, Russia; and I.M. Krichever Scientific School, Skoltech, Bolshoy Boulevard 30 bld. 1, 121205 Moscow, Russia}
\email{kazarian@mccme.ru}

\author[S.~Shadrin]{S.~Shadrin}
\address{S.~S.: Korteweg-de Vries Institute for Mathematics, University of Amsterdam, Postbus 94248, 1090GE Amsterdam, The Netherlands}
\email{S.Shadrin@uva.nl}	

\dedicatory{Dedicated to the memory of Vladimir Igorevich Arnold (1937--2010).}

\begin{document}
	
\begin{abstract} We analyze a new family of weighted double Hurwitz numbers that was introduced as a notable example in the context of the $x-y$ duality for logarithmic topological recursion. We use this family to systematically demonstrate, refine and develop techniques that play a crucial role in the interaction of hypergeometric (Orlov--Scherbin) KP tau functions and intersection theory of moduli spaces of curves. In particular, we discuss the subtleties related to the derivation of the ELSV-type formulas in this context and derive a 
new, explicit ELSV-type formula 
in terms of the so-called $\Omega$-classes. 
\end{abstract}
	
\maketitle
	
\setcounter{tocdepth}{2}
\tableofcontents

\section{Introduction}

\emph{Topological recursion (TR)}~\cite{EO-1st} is a powerful tool in enumerative geometry and combinatorics that connects these domains to the realm of integrability, cohomological field theories, Frobenius manifolds, intersection theory of moduli spaces, etc. etc. Its recent generalization, the so-called \emph{logarithmic topological recursion (LogTR)}~\cite{ABDKS4}, which itself is a natural example for even more universal {\em generalized TR} developed in~\cite{ABDKS-degenerate-irregular}, see also a survey in~\cite{HockSha}, allows for more general input data that consists of a compact Riemann surface $\Sigma$ and a pair of functions $(x,y)$ such that $dx$ and $dy$ are meromorphic.

Most problems in enumerative combinatorics resolved by topological recursion (like maps, hypermaps, constellations, various kinds of weighted Hurwitz numbers, quantum invariants of toric knots) are KP integrable. Nowadays it is quite expected since any topological recursion on the rational spectral curve is KP integrable~\cite{ABDKS-rationalspectralKP}, and in these examples the spectral curve is rational. A more subtle property of this type of problems is that the corresponding exponential generating functions are the instances of the so-called hypergeometric KP tau functions~\cite{KMMM} (also known as Orlov--Scherbin tau functions~\cite{OrlovSch-1,OrlovSch-2}). To this end, two big families of such tau functions were studied in~\cite{bychkov2021topological} and proved to satisfy topological recursion generalizing, in particular, the results of \cite{ACEH}. These, the so-called Family I and Family II, encompass  virtually all known examples of this kind.

Logarithmic topological recursion presents more possibilities, and a new family of examples, the so-called Family III (see Definition~\ref{def:fam3} below) was proposed in~\cite[Sec.~3.11]{ABDKS4} as the one satisfying LogTR. As it was mentioned in \emph{op.~cit.}, this family of examples is interesting in several different aspects: both $x$ and $y$ have logarithmic singularities; neither of the two sides of $x-y$ duality in this case is trivial; and it was also announced that there is an ELSV-type formula for a special case inside this family of examples. The goal of this paper is to put this example under the microscope and to present all results and computations announced in~\cite[Sec.~3.11]{ABDKS4}:

\begin{description}
	\item[Proof of LogTR for Family III (Theorem~\ref{thm:fam3})] The proof that we give here refines a variety of arguments proposed in~\cite{bychkov2021topological}, that is, we analyze the local behavior of the explicit closed formulas~\cite{BDKS1} for correlation differentials near the points of potential singularities. This way we reveal some new combinatorics of the type that is also highly demanded in computations related to free energies, as in~\cite{Hock-SymplecticNonInvariance}.
	\smallskip
	\item[Derivation of a new ELSV-type formula (Theorem~\ref{prop:elsvformula} / Proposition~\ref{prop:elsvformulazr})]
	This paper also contains a new instance of an ELSV-type formula for some special cases of weighted Hurwitz numbers in Family III. This formula contains the so-called $\Omega$-classes \cite{Chiodo,GLN}, also known in the literature as Chiodo classes~\cite{LPSZ}, and the effects coming from the logarithmic singularities of $x$ and $y$ in this case lead to a variety of new subtleties that have to be taken into account.
\end{description}

\begin{remark}
	Let us also note that there exists an alternative proof of LogTR in this case, perhaps more conceptual and more general, but less explicit. It goes through the interplay of symplectic and $x-y$ dualities and is available in~\cite{ABDKS-sympl}. We note also that in~\cite[Sec.~6.5]{ABDKS-sympl} the Family III of hypergeometric KP tau functions is further extended to include more examples, but we do not need this more general definition for the purposes of this paper.
\end{remark}

\subsection{Some background}

The core of this paper is quite technical and requires a lot of care for details, though the techniques that we use here are not new and can be collected from various sources. So we feel that we have to briefly recall some context to explain why this particular family of examples (Family III) is so illuminating and does deserve a detailed discussion.

\subsubsection{Context of LogTR} LogTR was developed in~\cite{ABDKS4} to systematize a number of insights collected by Hock~\cite{hock2023xy} in attempts to apply the $x-y$ swap techniques in the presence of logarithmic singularities. Since then it got quite some applications, see e.g.,~\cite{Hock-SymplecticNonInvariance,banerjee2026gwdtinvariants5dbps,banerjee2025quantumcurvestripgeometries}. It is still a rapidly evolving direction in the huge landscape of applications of topological recursion, and we find it very important to have a fully worked out example where LogTR is structurally indispensable to indicate what are the new technicalities that one might have to deal with. Indeed, it appears to be quite non-trivial to directly trace and eliminate the potentially emerging singularities, with the computations that substantially divert from the arguments in~\cite{bychkov2021topological} and require new combinatorial insights.

\subsubsection{Context of ELSV-type formulas} One of the most impressive applications of topological recursion is a uniform proof of a variety of ELSV-type formulas~\cite{ELSV} connecting enumerative combinatorics to geometry of the moduli spaces of curves. This idea is based on Eynard's formula for differentials~\cite{Eynard-intersections} (recast in the language of cohomological field theories in~\cite{DOSS}). It was first applied in this way in the literature in~\cite{DKOSS-ELSV} in order to give a new proof of the original ELSV formula and in~\cite{JPT-1} in order to give a new proof of the Johnson--Pandharipande--Tseng formulas~\cite{JPT}. Since then it was used in a variety of cases, in particular, to prove the results that were out of reach of existing techniques in algebraic geometry, see e.g.,~\cite{rELSV-2,DKPS-qr-ELSV,BDKLM,GiaKraLew}.

LogTR can be considered a special instance of blobbed TR~\cite{BS-blobbed}, see also~\cite{alexandrov2025blobbedtopologicalrecursionkp}, and in principle~\cite[Sec.~3]{BS-blobbed} provides technique to upgrade the Eynard-type formulas in the presence of blobs. However, LogTR demands an extra term excluded in~\emph{op.~cit.}, which generates the so-called string flow and has to be carefully incorporated. In Gromov--Witten theory this procedure is known under the name \emph{materialization}~\cite{Givental-semisimple}, see also~\cite{Janda-P1,alexandrov2025newspinpolynomialrelations}.

\subsubsection{Context of~\texorpdfstring{$\Omega$}{Omega}-classes}
The so-called $\Omega$-classes~\cite{Bini,Chiodo} become more and more ubiquitous in a variety of applications and are subject of many interesting properties and open conjectures~\cite{GLN,BorotKarevLewanski,blot2025cohomologicalrepresentationsquantumtau}. They are used, for instance, in ELSV-type formulas (see the references above), in explicit formulas for geometrically defined cycles on moduli spaces~\cite{JPPZ}, and in constructions of new types of integrable systems~\cite{blot2025cohomologicalrepresentationsquantumtau,BLS-inprep}, just to name a few.

As the above references indicate, multiple explicit links of the $\Omega$ classes to hypergeometric KP tau functions highlight more and more of their properties, and the new formula that we present here can definitely be used for their further study.

\subsection{Organization of the paper} In Section~\ref{sec:LogTR} we very briefly recall the definition of LogTR. In Section~\ref{sec:FamIII} we introduce the main object of study in this paper: the Family III of weighted double Hurwitz numbers and discuss their basic properties. In Section~\ref{sec:ELSV} we prove a new ELSV-type formula for a particular member of Family~III. In Appendix~\ref{sec:logProj} we prove the logarithmic projection property for Family III in a direct combinatorial way in the vein of~\cite{bychkov2021topological}. Appendix~\ref{sec:formulas} is devoted to computation of coefficients for Theorem~\ref{prop:elsvformula} given explicitly by Proposition~\ref{prop:elsvformulazr}.

\subsection{Notation} Throughout the text we use the following notation:
\begin{itemize}
	\item $\set{n}$ denotes $\{1,\dots,n\}$.
	\item $z_I$ denotes $\{z_i\}_{i\in I}$ for $I\subseteq \set{n}$.
	\item $[u^d]$ denotes the operator that extracts the corresponding coefficient from the whole expression to the right of it, that is, $[u^d]\sum_{i=-\infty}^\infty a_iu^i \coloneqq a_d$.
	\item $\cS(u)$ denotes $u^{-1}(e^{u/2} - e^{-u/2})$.
\end{itemize}

\subsection{Acknowledgments} A.~A. was supported by the Institute for Basic Science (IBS-R003-D1). B.~B. was supported by the ISF Grant 2848/25. P.~D.-B. and M.~K. were supported by the Basic Research Program of HSE University. S.~S. was supported by the Dutch Research Council. 

The paper of Vladimir Igorevich Arnold~\cite{Arnold} is a source that can justly be regarded as having inspired and provided the key ideas for the now flourishing field at the intersection of enumerative combinatorics and algebraic geometry through Hurwitz-type problems and ELSV-type formulas. We take this opportunity to dedicate the present paper to his memory.

\section{Definition of logarithmic topological recursion} \label{sec:LogTR}

In this section we recall the definition of logarithmic topological recursion (LogTR) introduced in~\cite{ABDKS4}.

Let $x$ and $y$ be functions on a compact Riemann surface $\Sigma$ with a symplectic basis of $\mathfrak{A}$ and $\mathfrak{B}$ cycles, and $\{\omega^{(g)}_{n}\}$, $g\geq 0$, $n\geq 1$, $2g-2+n>0$ be a given system of symmetric meromorphic $n$-differentials. We also define $\omega^{(0)}_1 \coloneqq ydx$ and $\omega^{(0)}_2\coloneqq B$, where $B$ is the so-called Bergman kernel, that is, the unique meromorphic bi-differential with a double pole along the diagonal with bi-residue $1$ and no further singularities, whose $\mathfrak{A}$-periods are all equal to $0$.

Assume that
\begin{itemize}
	\item $dx$ and $dy$ are meromorphic with possibly non-vanishing residues at some points (that is, $x$ and $y$ are possibly multivalued with logarithmic singularities);
	\item all zeros of $dx$ are simple;
	\item $y$ is regular at the zero locus of $dx$;
	\item the zero loci of $dx$ and $dy$ are disjoint.
\end{itemize}
Denote the zeros of $dx$ by $p_1,\dots,p_N\in \Sigma$ and let $\sigma_i$ be the deck transformations of $x$ near $p_i$.

\begin{definition} We say that the system of symmetric meromorphic differentials $\{\omega^{(g)}_n\}$, $g\geq 0$, $n\geq 1$ satisfies
	\begin{itemize}
		\item the \emph{linear loop equations}, if for any $g,n\geq 0$, $i=1,\dots,N$
		\begin{equation} \label{eq:LLE-original}
			\omega^{(g)}_{n+1}(z_{\llbracket n \rrbracket},z) + \omega^{(g)}_{n+1}(z_{\llbracket n \rrbracket},\sigma_i(z))
		\end{equation}
		is holomorphic at $z=p_i$ and has at least a simple zero in $z$ at $z=p_i$;
		\item the \emph{quadratic loop equations}, if for any $g,n\geq 0$, $i=1,\dots,N$, the quadratic differential in $z$
		\begin{equation} \label{eq:QLE-original}
			\omega^{(g-1)}_{n+2}(z_{\llbracket n \rrbracket},z,\sigma_i(z)) + \sum_{\substack{g_1+g_2=g\\ I_1\sqcup I_2 = \llbracket n \rrbracket
			}} \omega^{(g_1)}_{|I_1|+1} (z_{I_1},z)\omega^{(g_2)}_{|I_2|+1} (z_{I_2},\sigma_i(z))
		\end{equation}
		is holomorphic at $z=p_i$ and has at least a double zero at $z=p_i$.
	\end{itemize}
\end{definition}

\begin{definition}
	We say that the primitive $y$ of the differential $dy$ possesses a \emph{logarithmic singularity} at some point~$a$ on~$\Sigma$ if $dy$ has a pole at~$a$ with nonzero residue. A logarithmic singularity of~$y$ is called \emph{LogTR-vital} if this pole of $dy$ is simple and $dx$ has no pole at this point.
\end{definition}

Let $a_1,\dots,a_M$ be the LogTR-vital singular points of $y$. We denote the residues of~$dy$ at these points by $\alpha_1^{-1},\dots,\alpha_M^{-1}$, respectively. That is, the principal part of $dy$ near $a_i$ is given by $\alpha_i^{-1} dz / (z-a_i)$, where $z-a_i$ is any local coordinate. %
In the neighborhoods of $a_i$, $i=1,\dots,M$ consider the germs defined as the principal parts of coefficients of positive powers of $\hbar$ in the following expressions:
\begin{align}\label{eq:PrincipalPartsLog}
	\left( \frac{1}{\alpha_i\cS(\alpha_i\hbar \partial_{x})}\log(z-a_i)\right)dx.
\end{align}
These germs contribute to the second term in the definition below.

\begin{definition}\label{def:log-proj} We say that a collection of differentials $\{\omega^{(g)}_{n}\}$ satisfies the \emph{logarithmic topological recursion} (LogTR) on the spectral curve $(\Sigma,x,y)$, if $\omega^{(0)}_1 = ydx$, $\omega^{(0)}_2 = B$, $\{\omega^{(g)}_{n}\}$ satisfy the linear and quadratic loop equations and additionally they satisfy the so-called \emph{logarithmic projection property}: for any $g\geq 0$, $n\geq 1$, $2g-2+n>0$
	\begin{align} \label{eq:LogProjection}
		\omega^{(g)}_{n}(z_{\set{n}}) & = \sum_{i_1,\dots,i_n=1}^N \bigg(\prod_{j=1}^n \res_{z_j'= p_{i_j}} \int^{z_j'}_{p_{i_j}} B(\cdot,z_j)\bigg) \omega^{(g)}_{n}(z'_{\set{n}})
		\\ \notag & \quad
		+ \delta_{n,1}[\hbar^{2g}] \sum_{i=1}^M  \res_{z'= a_i} \bigg(\int^{z'}_{a_i} B(\cdot,z_1)\bigg)
		\left(\frac{1}{\alpha_i\cS(\alpha_i\hbar \partial_{x'})}\log(z'-a_i)\right)dx'.
	\end{align}
\end{definition}

This definition is presented in a more detailed way in~\cite{ABDKS4}.
Note that we dropped the prefix ${}^{\log{}}$ in the notation for $\omega$'s in the present paper as compared to~\cite{ABDKS4}, since LogTR is the only kind of topological recursion that we discuss here.

\section{New Family~III of weighted double Hurwitz numbers} \label{sec:FamIII}

Let $\hat y(z,\hbar)$ and $\hat\psi(\theta,\hbar)$ be two formal power series in two variables such that $\hat y(0,\hbar)=0$ and $\hat\psi(0,0)=0$. We also consider $\psi(\theta)\coloneqq \hat\psi(\theta,0)$ and $y(z) \coloneqq \hat y(z,0)$.

\begin{definition} The Orlov--Scherbin (or hypergeometric) tau function
	is given by the sum over partitions
	\begin{equation}\label{eq:ZOS}
		Z=\sum_{\lambda}\shin_\lambda(p)\shin_\lambda(\hbar^{-1} q)\exp\bigg(\sum_{(i,j)\in \lambda}\hat\psi(\hbar(i-j),\hbar)\bigg),
	\end{equation}
	where $\shin_\lambda(p)$ is the Schur function in the variables $p=(p_1,p_2,\dots)$ labeled by the partition $\lambda$ and the parameters $q_i=q_i(\hbar)$ for the second Schur function are the expansion coefficients defined by $\hat y(z,\hbar)=\sum_{j=1}^\infty q_j(\hbar)z^j$.
\end{definition}

\begin{definition}\label{def:OSdifferentials} The Orlov--Scherbin differentials are defined as
	\begin{align}\label{OSdif}
		\sum_{g=0}^\infty \hbar^{2g-2+n} \omega^{(g)}_n =\left.\prod_{i=1}^n \bigg(d_i \sum_{k_i=1}^\infty X_i^{k_i} \partial_{p_{k_i}} \bigg)\log Z\right|_{p_j=0,\, j=1,2,\dots} + \delta_{n,2}\frac{d_1X_1 d_2 X_2}{(X_1-X_2)^2},	
	\end{align}
where $X_i=X(z_i) = e^{x(z_i)} = z_i \exp(-\psi(y(z_i)))$.
\end{definition}

It is proved in~\cite{BDKS1,bychkov2021topological} that the Orlov--Scherbin differentials \eqref{OSdif} can be considered as expansions of the differentials globally defined on $\mathbb{C}P^1$ (with an affine coordinate $z$) near $z_1=\dots=z_n=0$ in the local coordinates $X_i$.

\begin{definition}[\cite{ABDKS4}] \label{def:fam3}
	The \emph{Family III} of Orlov--Scherbin input data is defined as follows:
	\begin{align} \label{eq:fam3psi}
		\psi(\theta) &\coloneqq \alpha(e^\theta-1), &\hat\psi(\theta,\hbar) &\coloneqq \cS(\hbar\partial_\theta) \psi(\theta),\\ \label{eq:fam3y}
		y(z) &\coloneqq \log R(z), &\hat y(z,\hbar) &\coloneqq \tfrac{1}{\cS(\hbar z\partial_z)} y(z),\\ \label{eq:fam3xz}
		x(z) &\coloneqq \log z -\psi(y(z))
		= \log z - \alpha\left(R(z) -1\right), & &
	\end{align}
	where $\alpha$ is a constant and $R(z)$ is a non-constant rational function such that
	\begin{itemize}
		\item $R(z)$ has simple zeros and poles;
		\item $R(0)=1$;
		\item zeros of $dx$ are simple;
		\item zeros of $dx$ do not coincide with the zeros of $R(z)$.
	\end{itemize}	
\end{definition}

\begin{definition}\label{def:HurwDef} The numbers
	\begin{equation}		
		h^{(g)}_{\mu_1,\dots,\mu_n}\coloneqq %
		\frac{\partial }{\partial p_{\mu_1}}\dots \frac{\partial }{\partial p_{\mu_n}}[\hbar^{2g-2+n}]\log Z\Bigg|_{p_j=0,\, j=1,2,\dots}
	\end{equation}
	defined via the Orlov--Scherbin tau function with the parameters $\hat \psi$ and $\hat y$ as in Definition~\ref{def:fam3} are weighted double Hurwitz numbers in the sense of~\cite{guay2017generating,harnad,als} and we call them Family III of weighted double Hurwitz numbers. 
\end{definition}

These numbers have the combinatorial meaning of certain weighted counts of paths in the Cayley graph of the symmetric group, via a proper generalization of the framework of \cite{guay2017generating, harnad}. Family I and Family II of weighted double Hurwitz numbers were introduced in~\cite{bychkov2021topological}, for the general description of all three families see \cite[Section 6]{ABDKS-sympl}.

\begin{remark} While (a straightforward generalization of) Guay-Paquet--Harnad's framework allows to express the respective numbers for any given representative of Family III as weighted counts of paths in the Cayley graph of the symmetric group as mentioned above, these weighted counts are bulky and not particularly nice looking; for this reason we do not provide them explicitly in the present paper. We do not know any natural  (nicely formulated) enumerative problem that would lead to the Family III; it is an interesting open question. So far this Family III is merely a testing ground that allows us to test the new techniques brought by the theory of LogTR as opposed to the original TR.
\end{remark}

In the present paper we give a direct proof in the vein of~\cite{bychkov2021topological} of the following Theorem.
\begin{theorem}\label{thm:fam3}
	
	With the input data $\hat \psi$ and $\hat y$ of Definition~\ref{def:fam3}, the Orlov--Scherbin differentials
	$\{\omega^{(g)}_n\}$ of Definition~\ref{def:OSdifferentials} satisfy LogTR on the spectral curve $(\mathbb{C}P^1,x(z),y(z))$.
\end{theorem}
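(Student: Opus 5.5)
We will follow the strategy of~\cite{bychkov2021topological}: start from the explicit closed formulas of~\cite{BDKS1} for the Orlov--Scherbin differentials $\omega^{(g)}_n$. These formulas express $\omega^{(g)}_n$ in terms of the rational data $x(z)=\log z-\alpha(R(z)-1)$ and $y(z)=\log R(z)$ through operators of the form
\[
\prod_i \bigg(\sum_{r\ge0}(-\partial_{x_i})^r [u_i^r]\, e^{-u_i\hat\psi\text{-terms}}\dots\bigg).
\]
From this representation we read off three facts:
\begin{itemize}
\item each $\omega^{(g)}_n$ is a global meromorphic differential on $\mathbb{C}P^1$;
\item its poles can occur only at the zeros $p_1,\dots,p_N$ of $dx$ and at the special points of $x,y$ (the zeros and poles of $R$, $z=0$, $z=\infty$);
\item $\omega^{(g)}_n$ is regular at $z=0$, because it expands in powers of $X_i$.
\end{itemize}
LogTR then splits into two parts. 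The first is the linear and quadratic loop equations at the $p_i$. The second is the logarithmic projection property~\eqref{eq:LogProjection}. For the latter it suffices to show that the principal parts of $\omega^{(g)}_n$ in each $z_j$ are located only at the $p_i$ together with the $\delta_{n,1}$ correction at the LogTR-vital points, and that these principal parts are exactly the ones prescribed.

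For the loop equations we plan to use the standard argument of~\cite{BDKS1,bychkov2021topological}, which works verbatim. The KP integrability and the $\hbar$-deformed cut-and-join (quantum curve) structure of Orlov--Scherbin tau functions give the loop equations at simple zeros of $dx$ whenever $y$ is regular there and $dy(p_i)\ne0$. Both conditions hold here: $R$ has no zero at $p_i$ by assumption, and $dx\,dy$ having no common zeros follows from $dx=dz/z-\alpha\,dR$ together with $dy=dR/R$. So the only genuinely new content is the projection property.

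For the projection property we next list the candidate singular points and treat each one.
\begin{itemize}
\item The poles of $R$ are poles of $x$ and of $y$ at which $dx$ has a higher-order pole. The exponential factors in the closed formula, of the form $e^{u(\mathcal S(u\hbar\partial_x)-1)\,y}$ and similar, are regular in the local coordinate $e^{-x}$ there. We will check that no pole survives, in the same way as the analysis at poles of $x$ in Family I/II.
\item At $z=\infty$ the situation depends on $R(\infty)$; in either case we plan a similar local-coordinate argument.
\item The zeros $a$ of $R$ are the LogTR-vital points: there $dy$ has a simple pole with residue $1$, so $\alpha_i=1$, while $dx$ is regular and nonzero. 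Near such a point, $\hat y=\mathcal S(\hbar z\partial_z)^{-1}\log R$ and the factor $e^{\hat\psi}$ produce terms involving $\log(z-a)$.
\end{itemize}

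The main step, and the expected obstacle, is to analyze these terms. We must show that for $n\ge2$, or in the variables not tied to the log, all would-be poles at $a$ cancel. For $n=1$ the surviving principal part must equal the principal part of $[\hbar^{2g}]\,\mathcal S(\hbar\partial_x)^{-1}\log(z-a)\,dx$. Our approach is to write $\log R=\log(z-a)+\text{regular}$. We then show that the $\hbar$-operators acting on $\log(z-a)$ can be commuted, with $z\partial_z$ converted to $\partial_x$ via $dx/dz$, and that only the term linear in $\log(z-a)$ carries singular parts. The nonlinear terms would then be shown regular by a combinatorial identity: a sum over $r$ of $(-\partial_x)^r[u^r]$ applied to powers of the singular part telescopes. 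We expect to need new combinatorial identities of this kind, found by expanding in $\hbar$ and using that $\partial_x$ of $\log(z-a)$ produces only simple poles. This is the step we will carry out in detail, presumably in an appendix.

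Once all principal parts are identified, the proof finishes by a Cauchy/residue argument on $\mathbb{C}P^1$. A meromorphic differential with vanishing $\mathfrak A$-periods, which is automatic in genus $0$, equals the sum over its poles of $\res\int B\cdot\omega$. This gives~\eqref{eq:LogProjection}, and Theorem~\ref{thm:fam3} follows.
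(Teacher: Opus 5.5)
\textbf{Verdict: genuine gap.} Your outline matches the paper's architecture: explicit formulas from \cite{BDKS1,bychkov2021topological}, loop equations from \cite[Theorem~2.21]{bychkov2021topological}, local analysis at the zeros and poles of $R$, at $0$ and at $\infty$, then the residue argument on $\mathbb{C}P^1$. Your regularity argument at $z=0$ via the expansion in positive powers of $X_i$ is fine, and slicker than the paper's. The gap is that the step carrying all the content, the analysis at a zero $a$ of $R$, is only promised, and the mechanism you sketch is not the one that works.

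\textbf{Zeros of $R$.} It is false that only the term linear in $\log(z-a)$ of $e^{u(\cS(u\hbar z\partial_z)/\cS(\hbar z\partial_z)-1)y}$ is singular. At order $\hbar^{2k}$, every product of terms in this exponential has a pole of the same order $2k$. The missing ideas are the following.
\begin{itemize}
\item Work in the coordinate $w=\log z-\log a$. There $z\partial_z=\partial_w$ exactly and $y=\log w+\text{regular}$.
\item Use the exact identity $[\hbar^{2k}]e^{u(\cS(u\hbar\partial_w)/\cS(\hbar\partial_w)-1)\log w}=u(u-1)\cdots(u-2k+1)\,w^{-2k}\,[\hbar^{2k}]\cS(\hbar)^{-u-1}$.
\item The cancellation does not come from telescoping in $\partial_x$. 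In $U_i$ the extraction $[u_i^r]$ is paired with $(t\partial_t)^r$ in the $\hat\psi$-factor $e^{-\alpha vt}(t\partial_t)^re^{\alpha v\cS(v\hbar)t}$ evaluated at $t=R(z_i)$. So the falling factorial becomes $t\partial_t(t\partial_t-1)\cdots(t\partial_t-2k+1)$, which kills all monomials $t^j$ with $j<2k$. After $t=R(z)$ this gives a zero of order at least $2k$ at $a$, compensating $w^{-2k}$.
\item For $n\ge2$, the factors $u_i$ coming from the $w_{k,\ell}$ supply the needed $t\partial_t$.
\item For $n=1$, the prefactor $1/u$ removes it. The surviving singular part (only $k=g$, with the regular part of $y$ and the $\hbar$-corrections of $e^{\alpha v\cS(v\hbar)t}$ dropped) must be matched with $[\hbar^{2g}]\cS(\hbar)^{-1}\,\partial_x^{2g}\log(z-a)$ by an explicit combinatorial identity.
\end{itemize}
This uses the specific deformations of both $\hat y$ and $\hat\psi$. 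Converting $z\partial_z$ into $\partial_x$ via $dx/dz$, as you propose, would destroy exactly this structure.

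\textbf{Poles of $R$.} Your claims here are false as stated. At a pole $B$ of $R$, $e^{-x}$ has an essential singularity, so it is not a local coordinate. The exponential factors are not regular either: $y=\log R=-\log w+\text{regular}$ also has a logarithmic singularity at $B$. By the same identity with $u\mapsto-u$, these factors have poles of order $2k$ at order $\hbar^{2k}$. This is not covered by the Family~I/II analysis. The paper needs the following.
\begin{itemize}
\item A reduction to terms $T_r$, followed by an integration-by-parts induction using $\frac{dx}{dR}+1=O((z-B)^2)$. This estimate is specific to $x=\log z-\alpha(R-1)$.
\item Because $dx$ has a double pole at $B$, one residual simple pole must still be eliminated. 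This uses $\omega^{(g)}_n=d_1\cdots d_nH^{(g)}_n$ with $H^{(g)}_n$ meromorphic \cite[Theorem~5.3]{BDKS1}, so $\omega^{(g)}_n$ cannot have simple poles.
\item Similarly, at $\infty$ an actual power-counting argument is required. It relies on $\hat\psi=\cS(\hbar\partial_\theta)\psi$, since otherwise the $j=0$ term of the second summand in~\eqref{eq:Wg1} would produce a pole.
\end{itemize}

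As it stands, your proposal correctly states what must be shown but gives no working mechanism for the zeros and poles of $R$.
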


\begin{remark}
	The assumption on the simplicity of zeros and poles of $R(z)$ and zeros of $dx$ can be lifted and the statement of the theorem remains true for the generalized TR instead of LogTR and modified $\hat{y}$, see \cite{ABDKS-sympl,ABDKS-degenerate-irregular}.
\end{remark}

\begin{remark} One of the interesting features of Family III as compared with the other two general families of weighted double Hurwitz numbers discussed in~\cite{bychkov2021topological} is the following.
	For Family~I of~\cite{bychkov2021topological} all terms in $\psi(\theta)$  except the logarithmic and linear ones have to be $\hbar$-deformed in $\hat\psi$ via the application of $\cS(\hbar\partial_\theta)$, while $y(z)$ remains $\hbar$-undeformed. For Family~II, on the other hand, $\psi(\theta)$ is very simple and $\hbar$-undeformed, while for $y(z)$ the logarithmic terms have to be $\hbar$-deformed via the application of $1/\cS(\hbar z\partial_z)$. Note that in the case of Family~III both $\psi(\theta)$ and $y(z)$ have to be $\hbar$-deformed with exactly the same operators which were used in Family~I and Family~II for $\psi(\theta)$ and $y(z)$ respectively.
\end{remark}

\begin{remark}
	Note that Family~III is basically the simplest case when the spectral curve for the Orlov--Scherbin differentials has LogTR-vital singularities. Indeed, it turns out that the respective $n$-point differentials for the natural choice of $\hbar$-deformation of $\psi$ and $y$ satisfy the LogTR and not the original TR, while we expect that no $\hbar$-deformations of $\psi$ and $y$ allow to restore the original TR.
	
\end{remark}

By Definition \ref{def:log-proj}, in order to prove Theorem~\ref{thm:fam3}, we need to establish loop equations and the logarithmic projection property. The loop equations are straightforward:

\begin{proposition}\label{prop:fam3loopeq}
	The $n$-point differentials $\{\omega^{(g)}_n\}$ of Theorem~\ref{thm:fam3} satisfy the loop equations.
\end{proposition}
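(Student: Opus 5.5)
The plan is to derive the loop equations from the general result on Orlov--Scherbin differentials in~\cite{bychkov2021topological} (see also~\cite{BDKS1}). That paper proves linear and quadratic loop equations for hypergeometric tau functions under fairly weak local assumptions on $\hat\psi$ and $\hat y$ near the critical points of $x$. The proof there rests on the explicit closed formula for $\omega^{(g)}_n$ from~\cite{BDKS1}, which writes $\omega^{(g)}_{n}$ as a sum over graphs. Each vertex carries an operator $\sum_{r\ge 0}\partial_{y}^{r}[u^r]\big(\ldots e^{u(\mathcal{S}(u\hbar\partial_x)\hat\psi(\hat y)-\dots)}\ldots\big)$, which is applied to functions that are meromorphic and globally defined on $\Sigma=\C P^1$.

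The essential input in that argument is local. Near each zero $p_i$ of $dx$, every term of the formula is a finite linear combination of expressions
\[
\Big(\frac{d}{dx}\Big)^{k}\big(\text{function regular at } p_i\big)\cdot dx .
\]
The regular function involved is built from $\hat\psi$ evaluated on $\hat y$ together with derivatives of $y$, and it must be holomorphic near $p_i$. The loop equations then follow from two standard facts: $\big(\frac{d}{dx}\big)^k$ of a regular function, paired with its deck image under $\sigma_i$, is anti-invariant up to regular terms; and the quadratic combination satisfies the abstract loop equations criterion from the Family I/II papers.

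So the steps I would carry out, in order, are the following.

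First, check that at each $p_i$ both $y=\log R$ and $\hat y=\mathcal{S}(\hbar z\partial_z)^{-1}y$ are regular. This holds because $R(p_i)\neq 0$ by the last assumption of Definition~\ref{def:fam3}, and $R$ has no pole at $p_i$ since $x$ would otherwise be singular there. It follows that $z\partial_z$ applied any number of times to $\log R$ stays regular at $p_i$, coefficientwise in $\hbar$. Likewise $\hat\psi=\mathcal{S}(\hbar\partial_\theta)\alpha(e^\theta-1)$ is entire in $\theta$, so $\hat\psi(\hat y)$ is regular at $p_i$.

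Second, confirm that the remaining hypotheses of the general loop-equation theorem hold: the zeros of $dx$ are simple, and $y$ is regular there with $dy(p_i)\ne 0$. The condition $dy(p_i)\neq 0$ needs a short check. We have $dx=dz/z-\alpha R'\,dz$, and $dx=0$ forces $R'(p_i)=1/(\alpha p_i)\neq 0$. Hence $dy=R'/R\,dz$ is nonzero at $p_i$.

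Third, invoke the theorem of~\cite{bychkov2021topological} on the loop equations for Orlov--Scherbin differentials. Its proof uses only these local properties, never the global form of $\psi$ or $y$. This yields both the linear and quadratic loop equations.

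The main obstacle I expect is making sure that the $\hbar$-deformations used here fit the hypotheses of that theorem. Family III deforms both $\psi$ and $y$ simultaneously, whereas Families I and II each deform only one of them. I would handle this by going back to the closed formula, where $\hat\psi$ and $\hat y$ enter only through the combination $\frac{1}{u\hbar}\big(\mathcal{S}(u\hbar\partial_x)\hat\psi(\hat y)-\psi(y)\big)$ and its derivatives. I would then check that every $\hbar$-coefficient of this combination is a polynomial in $\partial_x$-derivatives of functions regular at $p_i$. That is exactly the property the loop-equation argument requires.

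The logarithmic singularities of $y$ at the zeros and poles of $R$ are irrelevant for this step. The loop equations concern only neighborhoods of the $p_i$, and those points are disjoint from the zeros and poles of $R$. The logarithmic singularities enter only through the logarithmic projection property, which is treated separately.
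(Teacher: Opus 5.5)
Your proposal is correct and follows the paper's proof, which consists of invoking \cite[Theorem~2.21]{bychkov2021topological} after observing that its natural analytic assumptions hold. Your explicit checks supply exactly the details the paper leaves as ``easy to see'': regularity of $y=\log R$, of the $\hbar$-corrections of $\hat y$ and of the $\hat\psi$-data at the zeros $p_i$ of $dx$, simplicity of these zeros, and $dy(p_i)\neq 0$ via $R'(p_i)=1/(\alpha p_i)$. The obstacle you anticipate does not arise: that theorem is stated for arbitrary pairs $(\hat\psi,\hat y)$ satisfying those assumptions, not only for Families~I and~II, so you do not need to return to the closed formula.
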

\begin{proof}
	This is a direct corollary of~\cite[Theorem~2.21]{bychkov2021topological}, as it is easy to see that the \emph{natural analytic assumptions} of~\cite{bychkov2021topological} are satisfied in this case.
\end{proof}

We prove the logarithmic projection property and hence Theorem \ref{thm:fam3} in Appendix~\ref{sec:logProj}. 

\section{ELSV-type formula for family III}\label{sec:ELSV}

\subsection{Classical ELSV-type formulas}
Consider LogTR, which in this case is equivalent to the original TR, for the spectral curve
\begin{equation}\label{eq:Chiodo-rs-curve}
	x=\log z-z^r,\quad y=z^s,\quad B(z_1,z_2)=\frac{dz_1dz_2}{(z_1-z_2)^2}
\end{equation}
for some positive integers $r,s$. By \cite{LPSZ} its differentials are expressed in terms of intersection numbers involving Chiodo classes (for $2g-2+n>0$):
\begin{equation}\label{eq:LPSZ}
	\omega^{(g)}_{n}(z_{\set n})=C^{2-2g-n}\sum_{(a_1,k_1),\dots,(a_n,k_n)}\int\limits_{\ocM_{g,n}}\Omega_{g,n}^{r,s}(a_1,\dots,a_n)\psi_1^{k_1}\dots\psi_n^{k_n}
	\;\prod_{i=1}^n d\xi_{a_i,k_i}(z_i).
\end{equation}
Here $a_i\in\{1,\dots,r\}$, $k_i\in\Z_{\ge0}$, $\Omega_{g,n}^{r,s}$ is the Chiodo class, and
\begin{equation}
	\xi_{a,k}(z)\coloneqq r^{\frac{r-a}{r}}\Bigl(\frac{d}{r\,d x(z)}\Bigr)^k\frac{z^{r-a}}{1-r\,z^r},
	\quad \quad
C\coloneqq\frac{s}{r^{1+s/r}}.
\end{equation}

The function $X=e^x=z\,e^{-z^r}$ can be taken as a local coordinate at the point $z=0$ of the spectral curve. Using the known expansions of the basic $\xi$-functions in this local coordinate,
\begin{equation}\label{eq:xi-expansion}
	\xi_{r-a,k}=\sum_{m=0}^\infty\frac{\bigl(m+\tfrac{a}{r}\bigr)^{m+k}}{m!}r^{m+\frac{a}{r}}X^{m r+a}
\end{equation}
from~\eqref{eq:LPSZ} we get  the corresponding expansion of the LogTR differentials
\begin{equation}\label{eq:oomega-X-expansion}
	\omega^{(g)}_n(z_{\set n})=\sum_{\mu_1,\dots,\mu_n=1}^\infty h^{(g)}_{\mu_1,\dots,\mu_n} \prod_{i=1}^n dX_i^{\mu_i},
\end{equation}
where $X_i=X(z_i)=z_ie^{-z_i^r}$ and
\begin{equation}\label{eq:JPT}
	h^{(g)}_{\mu}=C^{2-2g-n} \,r^{\frac{1}{r}\,\sum_{i=1}^n \mu_i} \prod_{j=1}^n\frac
	{{(\mu_j/r)}^{\lfloor \mu_j/r\rfloor}}
	{\lfloor \mu_j/r\rfloor!}
	\int_{\ocM_{g,n}}\frac{\Omega_{g,n}^{r,s}(\ba)}{\prod_{i=1}^n(1-\frac{\mu_i}{r}\psi_i)}.
\end{equation}
Here $\mu=(\mu_1,\dots,\mu_n)$, $\ba=(a_1,\dots,a_n)$ with $a_i=r-r \langle\mu_i/r\rangle$; $\lfloor\cdot\rfloor$ and $\langle\cdot\rangle$ denote the integer and the fractional part of a number, respectively.

If $s=1$, then the coefficients $h^{(g)}_{\mu_1,\dots,\mu_n}$ defined by the expansion~\eqref{eq:oomega-X-expansion} are called the \emph{$r$-spin Hurwitz numbers}~\cite{Zvonkine,SSZ}. The ELSV-type formula~\eqref{eq:JPT} is known as the ELSV formula for $r=s=1$~\cite{ELSV}, the Johnson--Pandharipande--Tseng formula for $r=s$~\cite{JPT,LPSZ}, Zvonkine's $qr$-ELSV formula for any $r\geq 1$ divisible by $s$~\cite{rELSV-1,rELSV-2,DKPS-qr-ELSV}.

\begin{remark}
	It is possible to get rid of unnecessary rescaling coefficients in the above relations by a more convenient choice of normalization for the spectral curve. Namely, if we take
	\begin{align}
		\tilde x&=r\log z-z^r, & \tilde y & =z^s/s,
		& \tilde X&=e^{\tilde x/r}=z\,e^{-z^r/r},
		& \tilde \xi_{a,k}(z)&=\Bigl(\frac{d}{d \tilde x(z)}\Bigr)^k\frac{z^{r-a}}{1-z^r},
	\end{align}
	then the formulas look nicer with fewer coefficients:
	\begin{align}\label{eq:rescom}
		\tilde \omega^{(g)}_{n}&=\sum_{(a_1,k_1),\dots,(a_n,k_n)}\int_{\oM_{g,n}}\Omega_{g,n}^{r,s}(\ba)\psi_1^{k_1}\dots\psi_n^{k_n}
		\prod_{i=1}^n d\tilde \xi_{a_i,k_i}(z_i)
		=\sum_{\mu_1,\dots,\mu_n=1}^\infty \tilde h^{(g)}_{\mu_1,\dots,\mu_n}\prod_{i=1}^n d\tilde X_i^{\mu_i},	\\	
		\tilde \xi_{r-a,k}&=\sum_{m=0}^\infty\frac{\bigl(m+\tfrac{a}{r}\bigr)^{m+k}}{m!} \tilde X^{m r+a}, \\
		\tilde h^{(g)}_{\mu}&=\prod_{j=1}^n\frac
		{{(\mu_j/r)}^{\lfloor \mu_j/r\rfloor}}
		{\lfloor \mu_j/r\rfloor!}
		\int_{\ocM_{g,n}}\frac{\Omega_{g,n}^{r,s}(\ba)}{\prod_{i=1}^n(1-\frac{\mu_i}{r}\psi_i)}.
	\end{align}
	However, for consistency, we will use in what follows the convention of~\cite{LPSZ} and~\eqref{eq:Chiodo-rs-curve} as the equation of the basic spectral curve (also the spectral curve~\eqref{eq:Chiodo-rs-curve} fits into the Orlov-Scherbin Family~I of~\cite{bychkov2021topological}).
\end{remark}

\subsection{Generalized logarithmic \texorpdfstring{$r$}{r}-spin Hurwitz numbers}

We would like to generalize the ELSV-type formula~\eqref{eq:JPT} to the case of the spectral curve with the same function~$x$ as in~\eqref{eq:Chiodo-rs-curve} but with a different~$y$, now with logarithmic singularities. As an example, we consider the spectral curve
\begin{equation}\label{eq:curvezr}
	x=\log z-z^r,\quad y=\log(1+z^r).
\end{equation}

The differential $dy$ has simple poles at the logarithmic singularities of $y$ and its residues
\begin{equation}
	\alpha^{-1}=\res_{z=a}dy=1
\end{equation}
are independent of the choice of singular point $a$ such that $a^r=-1$. 

Introduce the ($\hbar$-expanded) $1$-differential
\begin{equation}\label{eq:Phi}
	\Phi(z)=(y(z)-z^r)dx(z)+\sum_{a: a^r=-1}
	\res_{\tilde z=a}
	\Bigl(\Bigl(\frac{1}{\cS(\alpha\hbar\partial_{x(\tilde z)})}-1\Bigr)y(\tilde z)\Bigr)dx(\tilde z)
	\int^{\tilde z}B(\cdot,z).
\end{equation}
Define also the convolution operation of two meromorphic $1$-differentials $\mu,\nu$ by
\begin{equation}
	\mu*\nu=-\nu*\mu=\sum_{q: rq^r=1}\res_{z=q}\mu(z)\int^z\nu(\cdot).
\end{equation}
It is well defined if the differentials $\mu,\nu$ have trivial residues at the branch points $q:\;rq^r=1$.
Then, the recomputation procedure expresses the differentials ${}^{\III}\omega^{(g)}_n$ of logarithmic topological recursion for the spectral curve~\eqref{eq:curvezr} in terms of those $\omega^{(g)}_n$ associated with the spectral curve~\eqref{eq:Chiodo-rs-curve} for $s=r$ (with the same~$x$ but different~$y$), see \cite[Proposition 4.5]{ABDKS4} for details:
\begin{equation}\label{eq:logTR-recomp}
	{}^{\III}\omega^{(g)}_n(z_{\set{n}})=[\hbar^{2g}]\Bigl(\delta_{n,1}\Phi(z_1)+
	\sum_{p=0}^\infty\frac{1}{p!}\sum_{g_0=0}^\infty\hbar^{2g_0}\omega^{(g_0)}_{n+p}(z_{\set{n}},\dots)({*}\Phi)^p\Bigr).
\end{equation}
Namely, the first summand in~\eqref{eq:Phi} corresponds to passing from $y=z^r$ to $y=\log(1+z^r)$, and the second summand describes passing from the original TR to LogTR.

Next, from $\Z_r$-symmetry, we observe that $d\xi_{a,k}*\Phi=0$ for $a\ne0\bmod r$. Denote
\begin{equation}
	\fc_k\coloneqq d\xi_{r,k}*\Phi.
\end{equation}
Note that $\fc_k$ is a power series in $\hbar^2$ with numerical coefficients satisfying $\fc_0\bigm|_{\hbar=0}=0$ and $\fc_1\bigm|_{\hbar=0}\ne C$. Then, denoting
\begin{equation}
	\Fc(\psi)=\sum_{k=0}^\infty \fc_k\psi^k
\end{equation}
we get from~\eqref{eq:LPSZ} and~\eqref{eq:logTR-recomp}
\begin{multline}\label{eq:log-omega}
	{}^{\III}\omega^{(g)}_n(z_{\set{n}})=
	\sum_{g_0=0}^g[\hbar^{2(g-g_0)}]
	\sum_{p=0}^\infty\frac{C^{2-2g_0-n-p}}{p!}\\
	\times\sum_{(a_1,k_1),\dots,(a_n,k_n)}
	\int\limits_{\ocM_{g_0,n+p}}
	\Omega^{r,r}_{g_0,n+p}(\ba,r^p)\prod_{j=1}^p \Fc(\psi_{n+j})
	\;\prod_{i=1}^n \psi_i^{k_i} d\xi_{a_i,k_i}(z_i),
	\quad n\ge3.
\end{multline}
Here $(\ba,r^p)=(a_1,\dots,a_n,\underbrace{r,\dots,r}_p)$. The restriction $n\ge3$ is due to non-existence of unstable moduli spaces $\ocM_{0,1},\ocM_{0,2}$ and also due to the first summand in~\eqref{eq:logTR-recomp} that does not belong to the span of $d\xi$-differentials. It is possible to write down the contribution of these terms explicitly, but for simplicity of exposition we restrict ourselves to the case $n\ge3$ here and below in this section.

Let us introduce the \emph{generalized logarithmic $r$-spin Hurwitz numbers}  ${}^{\III}h^{(g)}_{\mu_1,\dots,\mu_n}$ as the coefficients of the expansion of the differentials ${}^{\III}\omega^{(g)}_n$ in the local coordinates $X_i=e^{x(z_i)}$ at the origin:
\begin{equation}\label{eq:hurwIII}
	{}^{\III}\omega^{(g)}_n=\sum_{\mu_1,\dots,\mu_n=1}^\infty {}^{\III} h^{(g)}_{\mu_1,\dots,\mu_n}\prod_{i=1}^ndX_i^{\mu_i}.
\end{equation}
By Theorem~\ref{thm:fam3} they are exactly the special case of Family III of weighted double Hurwitz numbers of Definition~\ref{def:HurwDef} for $R(z)=1+z^r$ and $\alpha=1$
. As mentioned in Section \ref{sec:FamIII}, these numbers have combinatorial meaning of certain weighted counts of paths in the Cayley graph of the symmetric group, via a proper generalization of Guay-Paquet--Harnad's framework~\cite{guay2017generating, harnad}.

Then, expanding both sides of~\eqref{eq:log-omega} and using~\eqref{eq:xi-expansion} we obtain (for $n\geq 3$)
\begin{equation}\label{eq:logJPT-preliminary}
	{}^{\III}h^{(g)}_{\mu}=
	r^{\frac{1}{r}\sum_{i=1}^n \mu_i} \prod_{j=1}^n\frac{{(\mu_j/r)}^{\lfloor \mu_j/r\rfloor}}{\lfloor \mu_j/r\rfloor!}
	\sum_{g_0=0}^g[\hbar^{2(g-g_0)}]
	\sum_{p=0}^\infty\frac{C^{2-2g_0-n-p}}{p!}
	\int\limits_{\ocM_{g_0,n+p}}\frac{\Omega_{g_0,n+p}^{r,r}(\ba,r^p)\prod_{j=1}^p \Fc(\psi_{n+j})}{\prod_{i=1}^n(1-\frac{\mu_i}{r}\psi_i)}.
\end{equation}
We consider the obtained equality as a preliminary version of the required generalized ELSV formula. The final one is obtained by pushing forward the class under integration to $\ocM_{g_0,n}$.

\begin{theorem}\label{prop:elsvformula}
	For $n\ge3$ the generalized logarithmic $r$-spin Hurwitz numbers are given by 
	\begin{equation}\label{eq:mainformula}
		{}^{\III}h^{(g)}_{\mu}=
		r^{\frac{1}{r}\sum_{i=1}^n \mu_i} \prod_{j=1}^n\frac{{(\mu_j/r)}^{\lfloor \mu_j/r\rfloor}}{\lfloor \mu_j/r\rfloor!}
		\sum_{g_0=0}^g[\hbar^{2(g-g_0)}]
		{C^*}^{2-2g_0-n}e^{\bigl(\sum\limits_{i=1}^n \frac{\mu_i}{r}\bigr)u^*}
		\int\limits_{\ocM_{g_0,n}}\frac{e^{\sum_{k=1}^\infty s^*_k\kappa_k}\Omega_{g_0,n}^{r,r}(\ba)}
		{\prod_{i=1}^n(1-\tfrac{\mu_i}{r}\psi_i)},
	\end{equation}
	where $\ba=(a_1,\dots,a_n)$ with $a_i=r-r \langle\mu_i/r\rangle$ and where
	$C^*$, $u^*$ and $s^*_k$ are certain power series in $\hbar^2$. Namely, $u^*$ is a solution of implicit functional equation
	\begin{equation}\label{eq:ustarorigdef}
		C\,u^*=\sum_{m=0}^\infty \fc_m\frac{{u^*}^m}{m!},
	\end{equation}
	and the other $\hbar^2$-series are expressed in terms of $u^*$ via the expansions
	\begin{align} \label{eq:hkstarorigdef}
		\fc_k^*&=\sum_{m=0}^\infty \fc_{k+m}\frac{{u^*}^m}{m!},\qquad k\ge1,
		\\C^*&=C-\fc_1^*,
		\\
    \exp\left(-\sum_{i=1}^\infty s^*_it^i\right)&=1-\sum_{k=1}^\infty\frac{\fc^*_{k+1}}{C-\fc^*_1}t^k.
    \label{eq:skorigdef}
	\end{align}
	
\end{theorem}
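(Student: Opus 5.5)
The plan is to start from the preliminary identity~\eqref{eq:logJPT-preliminary} and, for each fixed $g_0$ and $n$, compute the generating sum over $p$ by pushing the integrand forward along the forgetful map $\pi_p\colon\ocM_{g_0,n+p}\to\ocM_{g_0,n}$.

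\textbf{Step 1: pullback of the Chiodo class.} The first input is the pullback property of the Chiodo class for $s=r$ and an extra marked point with index $a=r$ (trivial twist):
\begin{equation*}
	\Omega^{r,r}_{g_0,n+p}(\ba,r^p)=\pi_p^*\,\Omega^{r,r}_{g_0,n}(\ba).
\end{equation*}
This is standard from Chiodo's GRR formula, since the Bernoulli terms for the forgotten points vanish, and I would cite it. By the projection formula the class $\Omega^{r,r}_{g_0,n}(\ba)$ then comes out of the push-forward. What remains is to compute
\begin{equation*}
	\sum_{p\ge0}\frac{C^{-p}}{p!}\,\pi_{p*}\Bigl(\prod_{j=1}^p\Fc(\psi_{n+j})\prod_{i=1}^n\frac{1}{1-\frac{\mu_i}{r}\psi_i}\Bigr),
\end{equation*}
which is a universal computation with $\psi$- and $\kappa$-classes only.

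\textbf{Step 2: separating the coefficients of $\Fc$.} I would split $\Fc(\psi)=\fc_0+\fc_1\psi+\psi\,\widetilde{\Fc}(\psi)$ and treat the three kinds of insertions by their standard mechanisms.
\begin{itemize}
	\item The $\fc_0$-insertions (no $\psi$) are governed by the string equation. Since $\psi_i=\pi^*\psi_i+D_{i,n+1}$, they act by shifting the variables attached to $\psi_i^k$ and the other insertions. Summed over $p$, this is a translation along the string flow.
	\item Resumming the translation replaces the coefficients $\fc_k$ by $\fc_k^*=\sum_m\fc_{k+m}{u^*}^m/m!$, and produces the factor $e^{(\sum_i\mu_i/r)u^*}$ from the legs carrying $1/(1-\frac{\mu_i}{r}\psi_i)$. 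Here $u^*$ is the fixed point of the string flow, namely the solution of the implicit equation~\eqref{eq:ustarorigdef}; this is essentially a Lagrange-inversion resummation of rooted trees of string insertions.
	\item After this translation, the linear coefficient $\fc_1^*$ is a dilaton insertion. Each such insertion contributes the factor $2g_0-2+n$ plus the number of remaining points. Resumming together with the weight $C^{-1}$ per point turns $C^{2-2g_0-n}$ into ${C^*}^{2-2g_0-n}$ with $C^*=C-\fc_1^*$, and rescales the higher coefficients to $\fc^*_{k+1}/(C-\fc^*_1)$.
\end{itemize}

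\textbf{Step 3: the higher insertions.} The remaining insertions $\psi^{k+1}$, $k\ge1$, are pushed forward using the classical formula
\begin{equation*}
	\sum_{p}\frac1{p!}\,\pi_{p*}\prod_{j}\bigl(t_{k_j}\psi_{n+j}^{k_j+1}\bigr)=\exp\Bigl(\sum_k s_k\kappa_k\Bigr),
	\qquad\text{with}\qquad \exp\Bigl(-\sum s_i t^i\Bigr)=1-\sum t_k t^k.
\end{equation*}
In this formula the $\psi_i$ at the first $n$ points are also corrected, but these corrections are exactly absorbed at the string-flow step. Applying it with $t_k=\fc^*_{k+1}/(C-\fc^*_1)$ gives~\eqref{eq:skorigdef}.

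\textbf{Main obstacle.} I expect the hardest part to be the careful bookkeeping of the $\fc_0$ (string) insertions interacting with the factors $1/(1-\frac{\mu_i}{r}\psi_i)$ and with the other $\psi_{n+j}$. The cleanest route I would try first is not to separate the steps, but to consider the generating series in an auxiliary formal variable $u$ weighting $\fc_0$. I would show that it satisfies the ODE coming from the string equation, $\partial_u=$ (insertion of one extra point with $\psi^0$), and solve this ODE by characteristics. The point where the characteristic equation becomes stationary is the fixed point $u^*$ of~\eqref{eq:ustarorigdef}, and along the characteristic the coefficients transform to $\fc_k^*$. After that, the dilaton and $\kappa$ resummations are routine.
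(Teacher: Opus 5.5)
Your proposal is correct and follows the paper's proof: the unit property $\Omega^{r,r}_{g_0,n+p}(\ba,r^p)=\pi_p^*\Omega^{r,r}_{g_0,n}(\ba)$ with the projection formula, then the string equation to move along the trajectories $\fc_k(u)$ to the point $u^*$, and finally the $\kappa$-class push-forward formula; the only difference is that the paper does not resum the dilaton insertions separately but includes them in the term $s_0\kappa_0$, with $e^{-s_0}=1-\fc_1$ (normalization $C=1$ of Lemma~\ref{lem:push}) and $\kappa_0=2g_0-2+n$. Two small slips: $u^*$ is not a stationary point of the characteristic flow but the point on it where $\fc_0(u^*)=0$, and insertions $\psi_{n+j}^m$ with $m\ge1$ do not correct $\psi_1,\dots,\psi_n$ at all (since $D_{i,n+j}\psi_{n+j}=0$); only the $\psi^0$ string insertions do.
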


\begin{proof}
	Note that the classes $\Omega_{g,n}^{r,r}$ constitute a CohFT with the unit corresponding to the value of the index $a=r$ (cf. \cite[Lemma 2.1]{LPSZ}). In particular, they satisfy
	\begin{equation}
		\Omega_{g_0,n+p}^{r,r}(\ba,r^p)=\pi_{p}^*\Omega_{g_0,n}^{r,r}(\ba),
		\qquad\pi_{p}\colon\ocM_{g_0,n+p}\to\ocM_{g_0,n}.
	\end{equation}
	Therefore, by the projection formula, we get
	\begin{equation}\label{eq:proj}
		\int\limits_{\ocM_{g_0,n+p}}\frac{\Omega_{g_0,n+p}^{r,r}(\ba,r^p)\prod_{j=1}^p \Fc(\psi_{n+j})}{\prod_{i=1}^n(1-\frac{\mu_i}{r}\psi_i)}
		=
		\int\limits_{\ocM_{g_0,n}}\Omega_{g_0,n}^{r,r}(\ba)\;{\pi_{p}}_*\frac{\prod_{j=1}^p \Fc(\psi_{n+j})}{\prod_{i=1}^n(1-\frac{\mu_i}{r}\psi_i)}.
	\end{equation}
	In turn, the push-forward on the right hand side is computed in the following lemma proved in the next section.
	
	\begin{lemma}\label{lem:push}
		Let $\Fc(\psi)=\sum_{k=0}^\infty \fc_k\psi^k$ be a formal power series, $v_1,\dots,v_n$ be some constants, and $2g-2+n>0$. Then, the following identity holds in $H^*(\ocM_{g,n})$:
		\begin{equation}\label{eq:mainpush}
			\sum_{p=0}^\infty \frac{1}{p!}{\pi_{p}}_*\frac{\prod_{j=1}^p \Fc(\psi_{n+j})}{\prod_{i=1}^n(1-v_i\psi_i)}=
			(1-\fc_1^*)^{2-2g-n}e^{\bigl(\sum\limits_{i=1}^n v_i\bigr)u^*}\frac{e^{\sum_{k=1}^\infty s^*_k\kappa_k}}{\prod_{i=1}^n(1-v_i\psi_i)}.
		\end{equation}
		Here,  $u^*$ is a solution of the implicit functional equation
		\begin{equation}\label{eq:ustar}
			u^*=\sum_{m=0}^\infty \fc_m\frac{{u^*}^m}{m!},
		\end{equation}
		and $\fc_k^*$ and $s^*_k$ are expressed in terms of $u^*$ via the expansions
		\begin{align}
			\fc_k^*&=\sum_{m=0}^\infty \fc_{k+m}\frac{{u^*}^m}{m!},\qquad k\ge1,
			\\\exp\left(-\sum_{i=1}^\infty s^*_it^i\right)&=1-\sum_{k=1}^\infty\frac{\fc^*_{k+1}}{1-\fc^*_1}t^k.
		\end{align}
	\end{lemma}
	The formula of Proposition is obtained from~\eqref{eq:logJPT-preliminary} and~\eqref{eq:proj} by applying this lemma with $v_i=\mu_i/r$. To be precise, we use this lemma with two obvious modifications: first, we insert an additional rescaling parameter~$C$, and, second, we allow the dependence of the coefficients~$\fc_k$ of the series $\Fc(\psi)$ in~$\hbar$.
\end{proof}

\begin{remark} Given the methodology we developed here, it is fairly straightforward to write the formulas as in Theorem~\ref{prop:elsvformula} for $n=1$ and $n=2$ as well, they are just a bit more bulky, so we do not provide them here for brevity.
\end{remark}
	
\begin{remark}
	The formula of Lemma~\ref{lem:push} involves an infinite number of summands and the convergence of the formula should be justified. For example, we can treat Eq.~\eqref{eq:mainpush} in the formal expansion in $\fc_0,\fc_1$ considered as formal variables (while the dependence of the coefficient of any monomial in $\fc_0,\fc_1$ in $\fc_k$, $k\ge2$, is polynomial).
	
	The convergence of~\eqref{eq:mainformula} is justified, in turn, by the dependence of $\fc_k$ in $\hbar$ and the property $\fc_0\bigm|_{\hbar=0}=0$ and $\fc_1\bigm|_{\hbar=0}\ne C$. This property guarantees that the right hand side of~\eqref{eq:mainformula} has finitely many nonzero terms only.
\end{remark}

\begin{remark}	
	Note that Lemma \ref{lem:push} allows the following generalization: analogous arguments work if instead of the product $\prod\limits_{i=1}^n\frac 1{1-v_i\psi_i}$ we consider the product of $n$ formal power series $\Fc^{i}(\psi) = \sum\limits_{k=0}^\infty \fc_k^{(i)}\psi^k,\; i=1,\ldots,n.$	
\end{remark}

The following proposition provides an explicit formula for ${}^{\III}h^{(g)}_{\mu}$. Define the following functions depending on some variable $w$:
\begin{align}\label{eqB:uw}
u(w)&=\log(1-w)-r\,w,\quad \partial_u=\frac{w-1}{1+r-r \,w}\partial_w,
\qquad
\\W(w)&=W(w,\hbar)=\frac{1}{\cS(r\hbar\partial_u)}\log(1-w).
\end{align}

\begin{proposition}\label{prop:elsvformulazr}
The coefficients $u^*$, $C^*$, and $s^*_k$ in \eqref{eq:mainformula} are given by
\begin{align}
u^*&=\res_{w=0} u(w)\; d\log(W(w)),
\\C^*&=\res_{w=0}\partial_uW(w)\; d\log(W(w)),
\\\label{eqB:skw}
s^*_k&=
\res_{w=0}[\tau^k]\log\biggl(\frac{\partial_uW(w)}{\frac{\partial_u}{1-\tau\partial_u}W(w)}\biggr) d\log(W(w)).
\end{align}
\end{proposition}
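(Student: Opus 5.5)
Our plan is to compute the generating function of the numbers $\fc_k=d\xi_{r,k}*\Phi$ explicitly in closed form, and then to recast the implicit equations \eqref{eq:ustarorigdef}--\eqref{eq:skorigdef} as residue computations in a suitable uniformizing variable $w$.

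\emph{Step 1: compute $\Fc$.} Since $\xi_{r,k}=(\frac{d}{r\,dx})^k\frac{1}{1-rz^r}$ depends only on $z^r$, we pass to $t=z^r$. Then $r\,dx=d(\log t - rt)$, so $\frac{d}{r\,dx}=\frac{t}{1-rt}\frac{d}{dt}$. Set $w=-t$ (so that the logarithmic singularities $a^r=-1$ sit at $w=1$); then $r\,x=\log(-w)+rw$. The variable $u(w)=\log(1-w)-rw$ and the operator $\partial_u$ of \eqref{eqB:uw} should arise from re-expanding $\xi_{r,k}$ and $\Phi$ near $w=0$ after the change of variables implicit in the convolution $*$. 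The convolution is a sum of residues at the branch points $rq^r=1$. We will deform the contour and collect instead the residues at $z=0$, at $z=\infty$, and at the log points $a^r=-1$.

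The key observation we aim for is that the generating series $\sum_k \fc_k \frac{\psi^k}{\ldots}$, after the substitution that turns $\frac{d}{r\,dx}$ into multiplication by $\psi$ (Laplace-type duality), becomes
\[
\Fc(\psi)\sim \res_{w=0}\frac{1}{1-\psi\partial_u}\,W(w)\cdots .
\]
Here $W=\frac{1}{\cS(r\hbar\partial_u)}\log(1-w)$ absorbs both summands of \eqref{eq:Phi}: the shift $y-z^r$, and the $\hbar$-deformation $\frac{1}{\cS(\alpha\hbar\partial_x)}-1$, with $\alpha=1$ and $\partial_x=r\partial_u$ up to sign. Concretely, we expect an identity of the form
\[
\sum_m \fc_m\frac{U^m}{m!}=C\,U-\bigl(\text{value of } W \text{ along the shift } u\mapsto u+U\bigr).
\]
That is, the operator $e^{U\partial_u}$ translates $u$.

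\emph{Step 2: solve \eqref{eq:ustarorigdef}.} With this translation interpretation, the equation $C u^*=\sum\fc_m u^{*m}/m!$ becomes the statement that a certain shifted function vanishes. This is a one-variable implicit equation in $w$. Lagrange inversion, written in residue form, then gives $u^*=\res_{w=0} u(w)\,d\log W(w)$: the zero of $W$ near $w=0$ (note $W\sim -w$) is located by the logarithmic derivative. Similarly, $\fc_k^*$ are derivatives at the shifted point, so $C^*=C-\fc_1^*$ is $\partial_uW$ evaluated there. By the residue form of Lagrange inversion this is $\res \partial_uW\,d\log W$.

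\emph{Step 3: the $s_k^*$.} By \eqref{eq:skorigdef} we have $-\sum s_k^*\tau^k=\log\bigl(1-\sum_k \fc^*_{k+1}\tau^k/C^*\bigr)$. The series $\sum_k \fc^*_{k}\tau^{k-1}$ is $\frac{\partial_u}{1-\tau\partial_u}W$ evaluated at the shifted point, and $C^*$ is $\partial_uW$ there, so the ratio inside the logarithm matches \eqref{eqB:skw}. The evaluation at the root is again converted into $\res_{w=0}(\cdot)\,d\log W$.

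The main obstacle is Step 1: carefully evaluating the convolution residues, including the $\hbar$-deformed log term and signs, to identify $\Fc$ with the $W$-shift structure. The remaining steps are formal Lagrange inversion.
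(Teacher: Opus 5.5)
Your Steps 2 and 3 match the paper. Once one knows that
\[
W(u)=\frac{u}{r}-\sum_{m\ge0}\fc_m\frac{u^m}{m!},\qquad W=\frac{1}{\cS(\hbar\partial_x)}\log(-t)\ \text{ expanded at } t=z^r=-1 \text{ in } u=r\bigl(x-x(-1)\bigr),
\]
the following hold: \eqref{eq:ustarorigdef} reads $W(u^*)=0$, $C^*=\partial_uW|_{u^*}$, and $\fc^*_k=\frac{\delta_{k,1}}{r}-\partial_u^kW|_{u^*}$. The residue form of Lagrange--B\"urmann, $H(u^*)=\res_{u=0}H\,d\log W$ (coordinate independent, so it may be taken in $w$), then gives all three formulas. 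But this identity (Proposition~\ref{prop:W-series} in the paper) is the actual content of the statement. Your Step 1 only announces it as something ``we expect'', and the ``Laplace-type duality'' heuristic is not an argument. Your set-up is also wrong. With $w=-t$, the point $w=0$ is $z=0$ rather than the logarithmic point, and $u(w)=\log(1-w)-rw$ becomes $y+rt$. That is not $x$ up to scaling and shift, so $\partial_u\ne r^{-1}\partial_x$. The variable in the statement is $w=1+t$, centred at $z^r=-1$, for which $u=\log(-t)-r(1+t)=r\bigl(x-x(-1)\bigr)$.

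Here is the missing computation. In $t=z^r$ one has $\partial_x\log(-t)=\frac{r}{1-rt}$, hence $\xi_{r,k}=r^{-k-1}\partial_x^{k+1}\log(-t)$. So the $\log(-t)=\log(1-w)$ in $W$ comes from the basis functions, not from $y$. Next, replace $\Phi$ by $\Phi_0=\bigl(-t+\frac{1}{\cS(\hbar\partial_x)}y\bigr)dx$, i.e.\ do not take the principal part at $t=-1$. The product $\xi_{r,k}(\Phi_0-\Phi)$ is a global meromorphic differential with poles only at $t=1/r$, so $\fc_k=-r\res_{t=1/r}\xi_{r,k}\Phi_0$. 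Now integrate by parts at $t=1/r$, where $\log(-t)$ and $y$ are single valued. Move one $\partial_x$ onto $y$ and the even operator $\frac{1}{\cS(\hbar\partial_x)}$ onto $\log(-t)$. For $k\ge2$ this gives $\fc_k=\res_{t=1/r}\bigl(\frac{r^{-k}\partial_x^k}{\cS(\hbar\partial_x)}\log(-t)\bigr)\frac{dt}{t+1}$. This is a rational differential with poles only at $t=1/r$ and $t=-1$, so $\fc_k=-\partial_u^kW|_{u=0}$. For $k=0,1$ the same argument works, with the $-t\,dx$ term adding $\delta_{k,1}/r$. This integration by parts is also what makes your proposed contour deformation legitimate. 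Before it, $\xi_{r,k}\Phi$ contains the multivalued $y\,dx=\log(1+t)\,dx$, and the points $z^r=-1$ are branch points of $y$, not poles. So you cannot simply trade the residues at $rq^r=1$ for residues at $z=0$, $z=\infty$ and $z^r=-1$.
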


\begin{proof}
	Follows directly from Theorem~\ref{prop:elsvformula} and formulas of Corollary~\ref{corB:res-formulas} of Appendix~\ref{sec:formulas} that we rewrote in the local coordinate $w=1+t=1+z^r$ on the spectral curve.
\end{proof}

\begin{remark}	
	The statement of Theorem~\ref{prop:elsvformula} does not in fact depend on the form of function $y(z)$. That is, one can generalize Theorem~\ref{prop:elsvformula} to the following. We assume that $y(z)$ is $\mathbb Z_r$-invariant, i.e.
	there exists a function $\widetilde y(t)$ such that $y(z)=\widetilde y(z^r).$
	We furthermore assume that $\widetilde y(t)$ is a sum of a meromorphic function and finitely many logarithmic terms, 
	\begin{equation}
		\widetilde y(t)=\widetilde y_{\mathrm{mer}}(t)+\sum_{j=1}^J \alpha_j \log g_j(t),
	\end{equation}
	where $\widetilde y_{\mathrm{mer}}(t)$ is meromorphic, $\alpha_j\in\mathbb C$, and $g_j(t)$ are meromorphic functions.
	In particular, the differential $dy$ is meromorphic on the $z$-sphere and has at worst simple poles at the logarithmic singularities of $y$. Finally, we assume that these logarithmic singularities are disjoint from the branch points of $x$ (so that all convolutions/residue computations will be well-defined).
\end{remark}

\subsection{Proof of Lemma~\ref{lem:push}}

In this section, we compute the push-forward homomorphism associated with the forgetful map $\pi_{p}\colon\ocM_{g,n+p}\to \ocM_{g,n}$ and prove Lemma~\ref{lem:push}.

First, consider the one-step projection $\pi\colon\ocM_{g,n+1}\to\ocM_{g,n}$. In order to compute $\pi_*P$ for some class $P$ expressed as a polynomial in $\psi$ and $\kappa$ classes we apply the substitution
\begin{equation}\label{eq:pikappa}
	\kappa_m=\pi^*\kappa_m+\psi_{n+1}^m.
\end{equation}
By the projection formula, this reduces the computation of $\pi_*$ to the case when $P$ is a monomial in $\psi$ classes only. Then, we apply the equality
\begin{equation}\label{eq:pipush}
	\pi_*\psi_1^{k_1}\dots\psi_n^{k_n}\psi_{n+1}^m=
	\left\{\begin{array}{ll}
		\sum\limits_{i:\;k_i>0}\psi_1^{k_1}\dots\psi_i^{k_i-1}\dots\psi_n^{k_n},&m=0,\\
		\psi_1^{k_1}\dots\psi_n^{k_n}\kappa_{m-1},&m\ge1.
	\end{array}\right.
\end{equation}
Here $\kappa_m=\pi_*\psi_{n+1}^{m+1}$. If $m>0$, it is the Morita--Mumford class, and for $m=0$ we have $\kappa_0=2g-2+n$. Note that~\eqref{eq:pikappa} is valid both for $m>0$ and $m=0$.

Applying repeatedly this inductive procedure we compute ${\pi_{p}}_*$ on any polynomial in $\psi$ and $\kappa$ classes again as a polynomial in $\psi$ and $\kappa$ classes (including the constant $\kappa_0$).
Let us denote the left hand side of~\eqref{eq:mainpush} by $\Psi(\fc_0,\fc_1,\fc_2,\dots)$ regarding $\fc_k$ as formal variables.
Our goal is to compute~$\Psi$ in a closed form.

Consider a special case $\fc_0=0$. Define $s_k,\, k\in\mathbb{Z}_{\geq 0}$, via the expansion
\begin{equation}
	\exp\left(-\sum_{k=0}^\infty s_kt^k\right)=1-\sum_{i=0}^\infty \fc_{i+1}t^i,
\end{equation}
or, equivalently,
\begin{equation}
	\exp\left(-\sum_{k=1}^\infty s_kt^k\right)=1-\sum_{i=1}^\infty \frac{\fc_{i+1}}{1-\fc_1}t^i,\quad \exp\left(-s_0\right)=1-\fc_1.
\end{equation}
Then, from~\eqref{eq:pipush} we get
\begin{equation}\label{eq:h0=0case}
	\begin{aligned}
		\Psi\bigm|_{\fc_0=0}&=\frac{1}{\prod_{i=1}^n(1-v_i\psi_i)}
		\sum_{p=0}^\infty \frac{1}{p!}{\pi_{p}}_*\prod_{j=1}^p \Fc(\psi_{n+j})\bigm|_{\fc_0=0}
		\\&=\frac{e^{\sum_{k=0}^\infty s_k\kappa_k}}{\prod_{i=1}^n(1-v_i\psi_i)} =(1-\fc_1)^{-\kappa_0}\frac{e^{\sum_{k=1}^\infty s_k\kappa_k}}{\prod_{i=1}^n(1-v_i\psi_i)}.
	\end{aligned}
\end{equation}

The general case can be reduced to the case $\fc_0=0$ by the following arguments. It follows from~\eqref{eq:pipush} that the $H^*(\ocM_{g,n})$-valued function $\Psi(\fc_0,\fc_1,\dots)$ satisfies the following relation called \emph{string equation}:
\begin{equation}
	\frac{\partial\Psi}{\partial \fc_0}-\sum_{k=0}^\infty \fc_{k+1}\frac{\partial\Psi}{\partial \fc_k}=
	\textstyle\left(\sum_{i=1}^n v_i\right)\Psi.
\end{equation}
Let us denote by $\fc_k(u)$ the phase trajectories of the vector field $\sum_{k=0}^\infty \fc_{k+1}\partial_{\fc_k}-\partial_{\fc_0}$:
\begin{equation}
	\fc_k(u)=\sum_{m=0}^\infty \fc_{k+m}\frac{u^m}{m!}-\delta_{k,0}u.
\end{equation}
Then, the string equation implies the identity
\begin{equation}
	\Psi\bigm|_{\fc_k\to \fc_k(u)}=e^{-(\sum_{i=1}^n v_i) u}\Psi
\end{equation}
that holds identically in~$u$. In particular, we can take the value $u=u^*$ determined by the equation $\fc_0(u^*)=0$ which is equivalent to~\eqref{eq:ustar}. Then, denoting $\fc_k^*=\fc_k(u^*)$ we get
\begin{equation}
	\Psi=e^{(\sum_{i=1}^n v_i) u^*}\Psi\bigm|_{\fc_0\to0,\;\fc_k\to \fc^*_k}.
\end{equation}
Substituting~\eqref{eq:h0=0case} we obtain the required formula of Lemma~\ref{lem:push}. This completes its proof.

\appendix

\section{Logarithmic projection property} \label{sec:logProj}

In this Appendix, we provide a direct proof of the projection property for the differentials of Definition~\ref{def:OSdifferentials} with the parameters determined by~\eqref{eq:fam3psi}--\eqref{eq:fam3xz} providing thereby an independent proof of Theorem~\ref{thm:fam3}. To begin with, we first recall an explicit formula for the differentials from~\cite[Proposition~2.3]{bychkov2021topological}, specialized to the present family.

\begin{proposition}[{Corollary of~\cite[Proposition~2.3]{bychkov2021topological}}]\label{prop:fam3explicitf}
	
	{\ }
	
	\begin{itemize}
		\item
		For $n\geq 2$, $(g,n)\neq(0,2)$ we have
		\begin{equation}\label{eq:mainprop}
			\frac{\omega^{(g)}_n(z_{\set{n}})}{\prod_{i=1}^n dx_i}=[\hbar^{2g-2+2n}]
			U_n\dots U_1
			\sum_{\gamma \in \Gamma_n}\prod_{\{v_k,v_\ell\}\in E_\gamma} w_{k,\ell},
		\end{equation}
		where the sum is over all connected simple graphs on $n$ labeled vertices,
		\begin{equation}\label{eq:wkl}
			w_{k,\ell}=e^{\hbar^2u_ku_\ell\cS(u_k\hbar\,z_k\partial_{z_k})\cS(u_\ell\hbar\,z_\ell \partial_{z_\ell})
				\frac{z_k z_\ell}{(z_k-z_\ell)^2}}-1
		\end{equation}
		and $U_{i}$ is the operator acting on a function~$f$ in~$u_i$ and $z_i$ by
		\begin{align}\label{eq:Uihbar}
			U_{i} f&=
			\sum_{j,r=0}^\infty \partial_{x_i}^j\bigg(\frac{d \log z_i}{d x_i}
			\left.\left([v^j]e^{-\alpha v\,e^{\theta}}\partial_\theta^r e^{\alpha v\cS(v\,\hbar)e^{\theta}}\right)\right|_{\theta=y(z_i)}
			[u_i^r] \frac{e^{u_i\left(\frac{\cS(u_i\,\hbar\,z_i\,\partial_{z_i})}{\cS(\hbar\,z_i\,\partial_{z_i})}-1\right)y(z_i)}}{u_i\,\cS(u_i\,\hbar)}f(u_i,z_i,\hbar)\bigg)
\\\notag			&=
			\sum_{j,r=0}^\infty \partial_{x_i}^j\bigg(\frac{d \log z_i}{d x_i}
			\left.\left([v^j]e^{-\alpha v\,t}(t\partial_t)^r e^{\alpha v\cS(v\,\hbar)t}\right)
\right|_{t=R(z_i)}
			[u_i^r] \frac{e^{u_i\left(\frac{\cS(u_i\,\hbar\,z_i\,\partial_{z_i})}{\cS(\hbar\,z_i\,\partial_{z_i})}-1\right)y(z_i)}}{u_i\,\cS(u_i\,\hbar)}f(u_i,z_i,\hbar)\bigg),
		\end{align}
		and $x_i$ stands for $x(z_i)$.
		
		\item	For $(g,n)=(0,2)$ we have
		$\omega^{(0)}_2 =\frac{dz_1dz_2}{(z_1-z_2)^2}$.
		
		\item	 For $n=1$, $g>0$ we have
		\begin{align}\label{eq:Wg1}
			\frac{\omega^{(g)}_1(z_1)}{dx_1}&= [\hbar^{2g}] \,U_1 1 + [\hbar^{2g}]\sum_{j=0}^\infty \partial_{x_1}^{j}\Big(\left([v^{j+1}] e^{\alpha v\left(\cS(v\,\hbar)-1\right)e^\theta}\right)\Bigm|_{\theta=y(z_1)}\; \partial_{x_1}y(z_1)\Big)
\\ \notag			&= [\hbar^{2g}] \,U_1 1 + [\hbar^{2g}]\sum_{j=0}^\infty \partial_{x_1}^{j}\Big([v^{j+1}] e^{\alpha v\left(\cS(v\,\hbar)-1\right)R(z_1)}\; \partial_{x_1}y(z_1)\Big).
		\end{align}
		
		\item	Finally, for $(g,n)=(0,1)$ we have
		$\omega^{(0)}_1 = y(z_1)dx(z_1)$.
	\end{itemize}		
\end{proposition}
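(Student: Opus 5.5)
The plan is to specialize the general explicit formula of \cite[Proposition~2.3]{bychkov2021topological} to the input data \eqref{eq:fam3psi}--\eqref{eq:fam3xz}. That formula expresses $\omega^{(g)}_n/\prod dx_i$ for an arbitrary Orlov--Scherbin tau function as $[\hbar^{2g-2+2n}]$ of a product of operators $U_i$ applied to the sum over connected graphs of products of edge weights. The edge weights $w_{k,\ell}$ depend only on $\hat y$, and they appear through the operator $\cS(u\hbar z\partial_z)$ acting on $z_kz_\ell/(z_k-z_\ell)^2$. They are therefore unchanged by our specialization, and \eqref{eq:wkl} is literally the general formula. So all the work is in identifying the vertex operators $U_i$ and the extra $n=1$ term.

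For the vertex operators, the general $U_i$ contains a factor of the form $[v^j]e^{-v\psi(\theta)}\partial_\theta^r e^{v(\cS(v\hbar\partial_\theta))\hat\psi(\theta,\hbar)/\ldots}$, which in the original formula appears as $\exp\bigl(v\,\cS(v\hbar\partial_\theta)\hat\psi\bigr)$ with $\hat\psi$ given. Here I substitute $\hat\psi=\cS(\hbar\partial_\theta)\psi$ with $\psi=\alpha(e^\theta-1)$. Since $\partial_\theta$ acts on $e^\theta$ as multiplication by $1$, we get $\cS(v\hbar\partial_\theta)\cS(\hbar\partial_\theta)e^\theta=\cS(v\hbar)\cS(\hbar)e^\theta$. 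The key step is to check that, after the normalization built into the general formula (the $\hbar$-deformation there is written relative to $\cS(\hbar\partial_\theta)$), the exponent becomes exactly $\alpha v\cS(v\hbar)e^\theta$, up to the constant $-\alpha$, which cancels against $e^{-v\psi}$. The $u_i$-dependent factor comes from $\hat y=\cS(\hbar z\partial_z)^{-1}y$. In the general formula it appears as $\exp\bigl(u(\cS(u\hbar z\partial_z)\hat y - y)\bigr)/(u\cS(u\hbar))$, and substituting $\hat y$ yields the stated ratio $\cS(u_i\hbar z_i\partial_{z_i})/\cS(\hbar z_i\partial_{z_i})-1$ acting on $y$. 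Finally, $d\log z/dx$ is the general prefactor. The second line of \eqref{eq:Uihbar} then follows from $\theta=y(z)=\log R(z)$: we have $e^\theta=t=R(z_i)$ and $\partial_\theta=t\partial_t$.

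For $n=1$, the general formula has an additional term coming from the $\hbar$-deformation of $\psi$. It has the shape $\sum_j\partial_x^j\bigl([v^{j+1}]e^{v(\cS(v\hbar\partial_\theta)\hat\psi-\psi)}|_{\theta=y}\,\partial_xy\bigr)$ (this term vanishes when $\hat\psi$ is undeformed). The same exponential computation reduces it to \eqref{eq:Wg1}. The cases $(0,1)$ and $(0,2)$ are just the definitions, together with the fact that the Orlov--Scherbin $(0,2)$ differential, after adding the $\delta_{n,2}$ correction, equals the Bergman kernel in $z$. This fact is part of the cited proposition.

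The main obstacle is purely bookkeeping. I need to match precisely the convention in \cite{bychkov2021topological} for how $\hat\psi$ enters the vertex operator: whether the exponent is $v\cS(v\hbar\partial_\theta)\hat\psi$ or is divided by $\cS(\hbar\partial_\theta)$. I also need to confirm that the factors $\cS(\hbar)$ combine to give exactly $\cS(v\hbar)$, and not $\cS(v\hbar)\cS(\hbar)$. I would verify this by comparing the $\hbar\to0$ limit and the first $\hbar^2$ correction with a direct low-order computation from \eqref{eq:ZOS}.
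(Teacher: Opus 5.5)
You take the same route as the paper, which offers no argument beyond specializing \cite[Proposition~2.3]{bychkov2021topological}. Your handling of the $\hat y$-factor, of the prefactor $d\log z_i/dx_i$, of the substitution $e^\theta=t=R(z_i)$, $\partial_\theta=t\partial_t$, and of $(0,2)$ is fine. Note that $\omega^{(0)}_1=y\,dx$ is not a definition here: Definition~\ref{def:OSdifferentials} defines every $\omega^{(g)}_n$ from $\log Z$, so this too comes from the cited result.

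However, the one step with real content is left open, and the form you first write down is the wrong one. If the vertex factor were $e^{-v\psi}\partial_\theta^r e^{v\cS(v\hbar\partial_\theta)\hat\psi}$, then $\hat\psi=\cS(\hbar\partial_\theta)\psi$ would give $e^{\alpha v\cS(v\hbar)\cS(\hbar)e^\theta}$. That disagrees with \eqref{eq:Uihbar}, and with the extra term of \eqref{eq:Wg1}, already at order $\hbar^2$. Appealing to an unspecified ``normalization built into the general formula'' does not resolve this.

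The point can be settled structurally, with no low-order check. In \eqref{eq:ZOS}, $\hat\psi$ enters only through sums over contents, and a string of $v$ consecutive contents centred at $\theta$ contributes
\[
\sum_{m=0}^{v-1}\hat\psi\bigl(\theta+(\tfrac{v-1}{2}-m)\hbar\bigr)=v\,\frac{\cS(v\hbar\partial_\theta)}{\cS(\hbar\partial_\theta)}\,\hat\psi(\theta).
\]
This is why the cited formula involves $v\frac{\cS(v\hbar\partial_\theta)}{\cS(\hbar\partial_\theta)}\hat\psi$, both in $U_i$ and in the extra $n=1$ term. By contrast, $\hat y$ enters through its Taylor coefficients, $\hbar^{-1}\sum_k\frac{q_k}{k}z^k(e^{ku\hbar/2}-e^{-ku\hbar/2})=u\,\cS(u\hbar z\partial_z)\hat y(z)$, with no denominator. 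This asymmetry is exactly what you noticed, and it is why Family~III deforms $\psi$ by $\cS(\hbar\partial_\theta)$ but $y$ by $1/\cS(\hbar z\partial_z)$.

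With the correct form, $\frac{\cS(v\hbar\partial_\theta)}{\cS(\hbar\partial_\theta)}\hat\psi=\cS(v\hbar\partial_\theta)\psi=\alpha(\cS(v\hbar)e^\theta-1)$. The constant $-\alpha v$ cancels against $e^{-v\psi}$, and \eqref{eq:Uihbar} and \eqref{eq:Wg1} follow immediately.

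Relatedly, your parenthetical claim is inaccurate. The extra $n=1$ term does not vanish for undeformed nonlinear $\psi$. It vanishes when $\frac{\cS(v\hbar\partial_\theta)}{\cS(\hbar\partial_\theta)}\hat\psi=\psi$, e.g.\ for linear $\psi$.
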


For $(g,n)\neq(0,2)$ the differentials $\omega_n^{(g)}$ have no diagonal poles~\cite{BDKS1,bychkov2021topological}. Hence, for $2g-2+n>0$, the only possible poles are located at the zeros of $dx$, the zeros and poles of $R(z)$, and at $0$ and $\infty$. The LogTR-vital singularities are precisely the zeros of $R(z)$; throughout the proof we assume that these zeros are simple. Therefore it remains to show that
\begin{itemize}
	\item $\omega_n^{(g)}$ has no poles at the poles of $R(z)$, at $0$, and at $\infty$;
	\item for $n\ge2$ it is regular at the zeros of $R(z)$;
	\item for $n=1$ its principal part at a zero of $R(z)$ is the one prescribed by~\eqref{eq:PrincipalPartsLog}.
\end{itemize}

To lighten the notation we set $\alpha=1$; the general case is recovered by a trivial rescaling.

\subsection{Technical lemmas refining the ones from\texorpdfstring{~\cite{bychkov2021topological}}{ BDBKS21}}

We start with a sharpening of~\cite[Lemma~4.1]{bychkov2021topological}.

\begin{lemma}\label{lem:ulog}
	Let $u$ and $w$ be formal variables. Then for every $k\in \mathbb{Z}_{\geq 0}$,
	\begin{equation}\label{eq:ulog}
		[\hbar^{2k}]e^{u\left(\frac{\cS(u\,\hbar\,\partial_w)}{\cS(\hbar\,\partial_w)}-1\right)\log w } = \dfrac{u(u-1)\cdots (u-2k+1)}{w^{2k}}[\hbar^{2k}]\cS(\hbar)^{-u-1}.
	\end{equation}	
\end{lemma}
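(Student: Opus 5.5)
The plan is to first show that both sides are, for fixed $k$, $w^{-2k}$ times a polynomial in $u$. Then I verify the identity at all positive integers $u=N$, where the operator becomes a finite sum of shifts. Two polynomials agreeing on infinitely many points coincide.

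\emph{Step 1 (structure of the left hand side).} The operator $A\coloneqq u\bigl(\cS(u\hbar\partial_w)/\cS(\hbar\partial_w)-1\bigr)$ is a power series in $(\hbar\partial_w)^2$ with no constant term. Its coefficients are polynomials in $u$. Since $\partial_w^{2m}\log w=-(2m-1)!\,w^{-2m}$, we get $A\log w=\sum_{m\ge1}c_m(u)\,(\hbar/w)^{2m}$ with $c_m$ polynomial in $u$. Hence $[\hbar^{2k}]e^{A\log w}$ involves only finitely many $c_m$ and equals $P_k(u)\,w^{-2k}$ with $P_k$ a polynomial. The right hand side is also $w^{-2k}$ times a polynomial in $u$, because $[\hbar^{2k}]\cS(\hbar)^{-u-1}$ is polynomial in $u$ (as $\cS(\hbar)=1+O(\hbar^2)$). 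So it suffices to prove the identity for $u=N\in\Z_{>0}$.

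\emph{Step 2 (integer $u$).} For $u=N$ we have
\[
\frac{\cS(N\epsilon)}{\cS(\epsilon)}=\frac1N\sum_{j=0}^{N-1}e^{(N-1-2j)\epsilon/2}.
\]
So $A$ acts as $\sum_j e^{c_j\hbar\partial_w}-N$ with $c_j=\tfrac{N-1}{2}-j$, i.e.\ as a sum of shift operators. Therefore
\[
e^{A\log w}=\prod_{j=0}^{N-1}\bigl(1+c_j x\bigr),\qquad x=\hbar/w.
\]
It remains to show that $[x^{m}]\prod_j(1+c_jx)=(N)_m\,[\tau^m]\cS(\tau)^{-N-1}$, where $(N)_m=N(N-1)\cdots(N-m+1)$. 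Odd $m$ vanish on both sides by the symmetry $c_j\mapsto -c_j$, and $\cS$ is even.

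\emph{Step 3 (the key identity, via a residue).} Write
\[
\prod_j(1+c_jx)=x^N\prod_j\bigl(x^{-1}+c_j\bigr)=N!\,x^N\,[s^N](1+s)^{1/x+(N-1)/2}.
\]
Substitute $1+s=e^\tau$ in $[s^N]f=\res_{s=0} f\,ds/s^{N+1}$. This gives
\[
\res_{\tau=0}\frac{e^{\tau/x}e^{\tau(N+1)/2}\,d\tau}{(e^\tau-1)^{N+1}}=\res_{\tau=0}\frac{e^{\tau/x}\,d\tau}{\tau^{N+1}\cS(\tau)^{N+1}}=\sum_{m}\frac{x^{m-N}}{(N-m)!}[\tau^m]\cS(\tau)^{-N-1}.
\]
Multiplying by $N!\,x^N$ yields $\sum_m (N)_m\,x^m[\tau^m]\cS(\tau)^{-N-1}$, which is exactly the required identity (for $m>N$ both sides vanish). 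Taking $m=2k$ and $x=\hbar/w$ gives the lemma for $u=N$, and Step 1 extends it to formal $u$.

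I expect the main obstacle to be Step 3: finding the right generating-function and residue manipulation that turns the elementary symmetric functions of the arithmetic progression $c_j$ into coefficients of $\cS^{-N-1}$. The binomial/Lagrange substitution $1+s=e^\tau$ above is the route I would try first. The polynomiality argument in Step 1 needs care only to confirm that the $\hbar^{2k}$ coefficient involves finitely many terms, which holds because $A\log w$ has no $\hbar^0$ term.
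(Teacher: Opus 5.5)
Your proof is correct, and it shares the paper's skeleton. Both arguments show that the two sides are $w^{-2k}$ times polynomials in $u$. Both then specialize to integer values of $u$, where $u\,\cS(u\hbar\partial_w)/\cS(\hbar\partial_w)$ collapses to a finite sum of shift operators and the left-hand side becomes an explicit rational function of $\hbar/w$. You also justify the polynomiality in $u$ explicitly, which the paper only asserts.

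The difference is in the choice of interpolation points, and hence in how the key identity is proved.

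The paper takes $u=-m-1$:
\begin{itemize}
\item the left-hand side becomes $w^{m+1}/\prod_{i=0}^m\bigl(w-(i-\frac m2)\hbar\bigr)$;
\item the right-hand side involves $\cS(\hbar)^{-u-1}=\cS(\hbar)^m=\hbar^{-m}(e^{\hbar/2}-e^{-\hbar/2})^m$, a finite sum of exponentials;
\item a partial fraction decomposition on one side and the binomial expansion on the other then match summand by summand in $i$;
\item the common factor $(2k+m)!/m!$ is exactly the falling factorial evaluated at $u=-m-1$.
\end{itemize}

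You take $u=N>0$:
\begin{itemize}
\item the left-hand side is the polynomial $\prod_j(1+c_jx)$, so no partial fractions are needed;
\item the right-hand side now involves the negative power $\cS(\tau)^{-N-1}$;
\item you handle this by writing the product as $N!\,x^N[s^N](1+s)^{1/x+(N-1)/2}$ and substituting $1+s=e^\tau$ in the residue;
\item the substitution is legitimate coefficientwise in the formal parameter $1/x$.
\end{itemize}

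What each route buys:
\begin{itemize}
\item The paper's route never has to invert $\cS$, and it makes the agreement visible term by term.
\item Your route packages all coefficients into a single generating-function identity, $\prod_j(1+c_jx)=\sum_m (N)_m\,x^m[\tau^m]\cS(\tau)^{-N-1}$, obtained from one change of variables. The vanishing of both sides for $m>N$ is built in.
\end{itemize}

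Both arguments are complete once polynomiality in $u$ is established.
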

\begin{proof}
	Both sides are polynomials in $u$, so it is enough to check the identity for $u=-m-1$, $m\in\mathbb Z_{\ge0}$. For such $u$ we have
	\begin{align}
\notag		e^{u\left(\frac{\cS(u\hbar\partial_w)}{\cS(\hbar\partial_w)}-1\right)\log w}
		&=\exp\!\Big(\Big(-\frac{e^{\frac{m+1}{2}\hbar\partial_w}-e^{-\frac{m+1}{2}\hbar\partial_w}}{e^{\frac12\hbar\partial_w}-e^{-\frac12\hbar\partial_w}}+m+1\Big)\log w\Big) \\
\notag
		&=\frac{w^{m+1}}{\prod_{i=0}^m\bigl(w-(i-\frac m2)\hbar\bigr)}
\\\label{eq:ulogpoof1}
&=\sum_{i=0}^m\frac{(-1)^{m-i}(i-\frac m2)^m}{i!(m-i)!}\frac{1}{1-(i-\frac m2)\hbar w^{-1}}.
	\end{align}
In the final step, we perform a partial fraction decomposition. The coefficients of this decomposition are computed by taking residues at the corresponding poles.

For the function on the other side of the equality we have similarly
\begin{align}\label{eq:ulogpoof2}
\cS(\hbar)^{-u-1}&=\hbar^{-m}\bigl(e^{\hbar/2}-e^{-\hbar/2}\bigr)^m
=\hbar^{-m}\sum_{i=0}^m\frac{(-1)^{m-i}m!}{i!(m-i)!}e^{(i-\frac{m}{2})\hbar}.
\end{align}
The assertion of Lemma is obtained by comparing the coefficients of $\hbar^{2k}$ in the $i$th terms of the sums on the right hand sides of~\eqref{eq:ulogpoof1} and~\eqref{eq:ulogpoof2} for the same values of~$i$: they differ by an independent of~$i$ common factor
\begin{equation}
\frac{(2k+m)!}{m!}w^{-2k}=\dfrac{u(u-1)\cdots (u-2k+1)}{w^{2k}},
\end{equation}
and this is exactly what the lemma claims.
\end{proof}

\begin{corollary}\label{cor:zdzulog}	
	Let $A\in \mathbb{C}$ be fixed, and let $u$ and $z$ be formal variables. Then for every $k\in \mathbb{Z}_{\geq 0}$,
	\begin{equation}\label{eq:zdzulog}
		[\hbar^{2k}]e^{u\left(\frac{\cS(u\,\hbar\,z\partial_z)}{\cS(\hbar\,z\partial_z)}-1\right)\log\left(\log z-\log A\right) } = \dfrac{u(u-1)\cdots (u-2k+1)}{(\log z- \log A)^{2k}}[\hbar^{2k}]\cS(\hbar)^{-u-1}.
	\end{equation}	
\end{corollary}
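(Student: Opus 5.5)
This follows from Lemma~\ref{lem:ulog} by a change of variables. Set $w \coloneqq \log z - \log A$, so that $w$ is a function of $z$ (formally, a shift of $\log z$ by a constant). Then $z\partial_z = \partial_w$: for any function $f$ we have
\[
z\partial_z f(\log z-\log A) = f'(\log z-\log A),
\]
and the additive constant $-\log A$ does not affect the derivative. Consequently any formal power series in the operator $\hbar\, z\partial_z$, with coefficients depending on $u$ and $\hbar$, acts on functions of $\log z - \log A$ exactly as the same series in $\hbar\,\partial_w$ acts on the corresponding functions of $w$.

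The plan is to apply this to the operator $\frac{\cS(u\hbar z\partial_z)}{\cS(\hbar z\partial_z)}-1$. It is a power series in $\hbar z\partial_z$ with polynomial-in-$u$ coefficients and no constant term, since $\cS(0)=1$. We apply it to $\log(\log z-\log A)=\log w$. The result equals
\[
\Bigl(\tfrac{\cS(u\hbar\partial_w)}{\cS(\hbar\partial_w)}-1\Bigr)\log w ,
\]
expressed back in terms of $z$ through $w=\log z-\log A$. Multiplying by $u$, exponentiating, and extracting $[\hbar^{2k}]$ all commute with the substitution $w=\log z-\log A$. Therefore the left-hand side of~\eqref{eq:zdzulog} equals the left-hand side of~\eqref{eq:ulog} evaluated at $w=\log z-\log A$. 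By Lemma~\ref{lem:ulog} this is
\[
\frac{u(u-1)\cdots(u-2k+1)}{w^{2k}}\,[\hbar^{2k}]\cS(\hbar)^{-u-1}.
\]
Substituting back $w=\log z-\log A$ gives the claim.

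The only point needing care is in what sense these are formal identities. Each $\hbar$-coefficient on the left is a finite expression in derivatives $\partial_w^j\log w$, that is, in negative powers of $w$. So the identity is one of rational functions of $w$ (polynomial in $u$). Substituting $w=\log z-\log A$ is then legitimate for any branch of the logarithm, or for $z$ near $A$ where $w$ is a local coordinate. I expect no real obstacle: the whole content is the chain-rule identity $z\partial_z=\partial_w$.
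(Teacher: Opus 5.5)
Your proposal is correct and is exactly the paper's argument: the paper proves the corollary in one line by applying Lemma~\ref{lem:ulog} with $w=\log z-\log A$, using $\partial_w=z\partial_z$. Your extra remarks on why the substitution is legitimate coefficientwise in $\hbar$ (each coefficient being polynomial in $u$ and in $w^{-1}$) are a harmless elaboration of that same step.
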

\begin{proof}
	Apply Lemma~\ref{lem:ulog} with $w=\log z-\log A$, so that $\partial_w=z\partial_z$.
\end{proof}

\subsection{Zeros of \texorpdfstring{$R(z)$}{R(z)}}

We begin with the behavior at a zero of $R(z)$.

\begin{lemma}\label{lem:UzerosR}
	Let $A$ be a zero of $R(z)$. Let $f(u,z,\hbar)$ be a formal power series in $\hbar$ whose coefficients are polynomial in $u$ and regular in $z$ at $A$. Then, with $U$ denoting one of the operators $U_i$ and $x=x(z)$,
	\begin{equation}
		[\hbar^{2m}]U\bigl(u f(u,z,\hbar)\bigr)
	\end{equation}
	is regular at $z=A$ for every $m\ge0$. Moreover, for every $g\ge1$,
	\begin{equation}
		[\hbar^{2g}]U(1)=\bigl[\hbar^{2g}\bigr]\frac{1}{\cS(\hbar\partial_x)}\log(z-A)+(\mathrm{holomorphic}).
	\end{equation}
\end{lemma}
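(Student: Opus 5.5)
Near $z=A$ we have $R(z)=(z-A)\rho(z)$ with $\rho(A)\ne0$, since the zero is simple. Hence $y(z)=\log(z-A)+\log\rho(z)$, and $\log\rho$ is holomorphic at $A$. Since $A\neq 0$, the coordinate $\log z$ is regular at $A$, and we may write $\log(z-A)=\log(\log z-\log A)+(\mathrm{holomorphic})$.

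The plan is to split the exponential factor in $U$,
\[
e^{u\left(\frac{\cS(u\hbar z\partial_z)}{\cS(\hbar z\partial_z)}-1\right)y(z)},
\]
into its $\log(\log z-\log A)$ part and a part coming from the holomorphic remainder. These two exponentials multiply, since the operator acting on $y$ is linear in $y$. The remainder part is a power series in $\hbar^2$ whose coefficients are holomorphic in $z$ and polynomial in $u$; moreover, each $\hbar^{2k}$ coefficient with $k\ge1$ is divisible by $u$, because the bracket vanishes to first order in $u$.

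The singular part is handled exactly by Corollary~\ref{cor:zdzulog}. Its $\hbar^{2k}$ coefficient is
\[
u(u-1)\cdots(u-2k+1)\,(\log z-\log A)^{-2k}\,[\hbar^{2k}]\cS(\hbar)^{-u-1}.
\]

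The other ingredients of $U$ are as follows.
\begin{itemize}
\item The prefactor $d\log z/dx=1/(1-zR'(z))$ is regular at $A$, since $dx$ does not vanish at zeros of $R$ by assumption.
\item The coefficients $[v^j]e^{-vt}(t\partial_t)^r e^{v\cS(v\hbar)t}$, evaluated at $t=R(z)$, are polynomials in $t$ and hence regular.
\item The outer derivatives $\partial_x^j$ preserve regularity, because $\partial_x=(d\log z/dx)^{-1}\cdots$ is a regular vector field near $A$ with $dx(A)\ne0$.
\end{itemize}
So the only source of a pole is the factor $(\log z-\log A)^{-2k}$, which behaves like $(z-A)^{-2k}$.

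The key step is a counting/cancellation argument. The operator $[u^r]$ is paired with $(t\partial_t)^r$ acting on $e^{v\cS(v\hbar)t}$. The result is a polynomial in $t$ divisible by $t$ when $r\ge1$, and more precisely its $t^\ell$-expansion has the form $t^{\ell}\cdot(\text{poly in }\ell)$. Writing $e^{v\cS t}=\sum_\ell (v\cS)^\ell t^\ell/\ell!$, the sum over $r$ of $\ell^r[u^r]$ acts as the substitution $u=\ell$. The first expression therefore becomes
\[
\sum_\ell \frac{t^\ell}{\ell!}\,\bigl(\ldots\bigr)\big|_{u=\ell}.
\]
The factor $u(u-1)\cdots(u-2k+1)$ evaluated at $u=\ell$ vanishes for $\ell<2k$, so only terms with $t^\ell$, $\ell\ge 2k$, survive. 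Since $t=R(z)$ vanishes to order one at $A$, the factor $t^\ell$ cancels the pole $(z-A)^{-2k}$. This substitution has to be done carefully, because $f$ and the holomorphic factor also depend on $u$. Since everything is polynomial in $u$, I would expand $uf\cdot(\mathrm{hol})$ in monomials $u^a$ and treat $u^a\cdot u^{\underline{2k}}$ by the same substitution.

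Without the extra factor $u$, the surviving term is the one with $\ell=0$ and the factor $1/(u\cS(u\hbar))$. For $uf$ the factor $u$ cancels the $1/u$, leaving only genuine polynomials, which gives regularity. For $f=1$, the $\ell=0$ contribution, i.e.\ the $u^{-1}$ term, remains. To identify it I would use the exact form of Lemma~\ref{lem:ulog}: the $u^{-1}$ piece corresponds to expanding the exponential to first order in $u$, and this produces $\frac{1}{\cS(\hbar z\partial_z)}y$ differentiated appropriately. After the $\partial_x^j$ and $v$-sums (where $e^{-vt}e^{v\cS t}$ at $t$-order $0$ is trivial), these should assemble into $\frac{1}{\cS(\hbar\partial_x)}\log(z-A)$ modulo holomorphic terms.

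The main obstacle is this last identification. One has to convert $z\partial_z$-operators acting on $\log(z-A)$ into $\partial_x$-operators through $d\log z/dx$ and the $\sum_j\partial_x^j[v^j]$ structure, and check that the principal parts match exactly. I would try to verify it first at order $\hbar^2$, and then argue generally by noting that principal parts of $\log(z-A)$ under any $\hbar$-series of vector fields depend only on the leading coefficient.
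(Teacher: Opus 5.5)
Your argument for the first claim is correct and is essentially the paper's. Pairing $[u^r]$ with $(t\partial_t)^r$ turns the falling factorial from Corollary~\ref{cor:zdzulog} into an operator that kills $t^\ell$ for $\ell<2k$, and $t=R(z)$ then absorbs $w^{-2k}$, where $w=\log z-\log A$.

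Your isolation of the singular part of $U(1)$ through the $\ell=0$ term is also sound. At order $\hbar^{2g}$ only $k=g$ contributes, with $\ell=0$ coefficient $-(2g-1)!\,c_g$, where $c_g=[\hbar^{2g}]\cS(\hbar)^{-1}$. Once the part of $t$-degree $\ge 2g$ is discarded as holomorphic, this leads exactly to the finite sum~\eqref{eq:U1-singular-form}. Two small slips here:
\begin{itemize}
\item This is the $u^0$ coefficient of $e^{(\cdots)}/(u\cS(u\hbar))$, not the $u^{-1}$ term, which $[u^r]$, $r\ge0$, discards.
\item The prefactor $e^{-vt}$ is not trivial. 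It produces $\sum_j\frac{(-1)^j}{j!}\partial_x^j\bigl(\frac{dw}{dx}R^j\,\cdot\,\bigr)$, and without it even the leading pole coefficient is wrong.
\end{itemize}

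\textbf{The gap.} The real gap is the step you call the main obstacle, namely the identity
\begin{equation*}
\sum_{j=0}^{2g-1}(-1)^{j+1}\frac{(2g-1)!}{j!}\,\partial_x^j\Bigl(\frac{dw}{dx}\,\frac{R(z)^j}{w^{2g}}\Bigr)=\partial_x^{2g}\log(z-A)+(\mathrm{holomorphic}).
\end{equation*}
Your suggested general reason is false: principal parts of $\log(z-A)$ under an $\hbar$-series of vector fields do not depend only on the leading coefficient. For $V=b(\zeta)\partial_\zeta$ with $\zeta=z-A$ one finds $V^2\log\zeta=-b_0^2\zeta^{-2}-b_0b_1\zeta^{-1}+O(1)$. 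In general the principal part of $V^{2g}\log\zeta$ depends on the whole $(2g-1)$-jet of $b$.

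Concretely, $\frac{1}{\cS(\hbar z\partial_z)}\log(z-A)$ and $\frac{1}{\cS(\hbar\partial_x)}\log(z-A)$ already have different leading poles. For $R=1+z^r$, for instance, $\frac{d\log z}{dx}(A)=\frac{1}{1+r}$. So all $2g$ pole coefficients must be matched exactly, and a check at order $\hbar^2$ does not propagate to higher orders.

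\textbf{The missing idea.} You need the specific relation between $x$, $R$ and $w$. With $\tilde x=x-x(A)$ one has $R=w-\tilde x$; this is exactly where $x=\log z-(R-1)$ enters. Expand $(w-\tilde x)^j$:
\begin{itemize}
\item The $\tilde x$-free term with $j=2g-1$ is $\partial_x^{2g-1}\bigl(\frac{dw}{dx}w^{-1}\bigr)=\partial_x^{2g}\log w$.
\item In every other term, write $\frac{dw}{dx}w^{-k-1}=-\frac1k\partial_x\bigl(w^{-k}\bigr)$ with $k\ge1$ and apply the Leibniz rule. The resulting terms cancel in pairs, leaving only $\partial_x^{2g}\bigl((\tilde x/w)^k\bigr)$. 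These are holomorphic because $\tilde x/w$ is regular at $A$.
\end{itemize}
Conceptually, the $e^{-vt}$-sum is a truncated Lagrange--B\"urmann inversion of $\log z=x+R-1$, which is why it converts $z\partial_z$ into $\partial_x$. The finite telescoping makes this rigorous without any convergence issues. Without some such argument, the second claim of the lemma remains unproved.
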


\begin{proof}
Notice that the function $x$ is holomorphic at $z=A$ and $dx$ is nonvanishing. It follows than both $x(z)-x(A)$ and $w(z)=\log z-\log A$ can be taken as local coordinates at $A$. Write
	\begin{equation}
		y(z)=\log w(z)+\eta_A(z),
	\end{equation}
	where $\eta_A=\log\bigl(R(z)/w(z)\bigr)$ is regular at $A$. Substituting this into~\eqref{eq:Uihbar} and using Corollary~\ref{cor:zdzulog}, we obtain
	\begin{align}\label{eq:fam3Uexpr1-new}
		Uf&=
		\sum_{j,r,k\ge0}\hbar^{2k}\partial_x^j[v^j]\Bigg(\frac{dw}{dx}\frac{1}{w^{2k}}
		\left.\Big(e^{-v t}(t\partial_t)^r e^{v\cS(v\hbar)t}\Big)\right|_{t=R(z)}
		\notag\\
		&\qquad\qquad\times [u^r]u(u-1)\cdots(u-2k+1)
		\Big(
		\frac{E_A(u,z,\hbar)}{u}f(u,z,\hbar)[\hbar^{2k}]\cS(\hbar)^{-u-1}\Bigg),
	\end{align}
	where
	\begin{equation}
		E_A(u,z,\hbar)\coloneqq
		e^{u\left(\frac{\cS(u\hbar z\partial_z)}{\cS(\hbar z\partial_z)}-1\right)\eta_A(z)}\frac{1}{\cS(u\hbar)}
	\end{equation}
	has coefficients polynomial in $u$ and regular in $z$ at $A$.

	For the first claim we replace $f$ by $uf$; this cancels the factor $1/u$ in~\eqref{eq:fam3Uexpr1-new}. For each $r\ge0$, the function	$e^{-vt}(t\partial_t)^r e^{v\cS(v\hbar)t}$ is a series in $\hbar$ with coefficients polynomial in~$t$ and $v$. On the other hand, for a fixed $k\ge0$, the falling factorial in $u$ turns into the differential operator
	\begin{equation}\label{eq:t-falling}
		t\partial_t(t\partial_t-1)\cdots(t\partial_t-2k+1)
	\end{equation}
that kills all monomials in~$t$ of degrees smaller than $2k$. Substituting $t=R(z)$ we obtain a function with zero of order at least~$2k$ at~$A$. This zero compensates the pole of $\frac{1}{w^{2k}}$ so that the function to which the operator $\partial_x^j[v^j]$ is applied is regular at $z=A$. This proves the first claim.
	
	Now consider $U(1)$. In this case, we have no factor~$u$ in~\eqref{eq:fam3Uexpr1-new} any more corresponding to the factor~$t\partial_t$ in~\eqref{eq:t-falling}. However, exactly the same argument shows that any term containing either
	\begin{itemize}
		\item a positive power of $u$ from $E_A(u,z,\hbar)$, or
		\item a positive $\hbar$-correction from $e^{v\cS(v\hbar)t}=e^{vt}(1+\hbar^2t\,\mathrm{reg})$
	\end{itemize}
	is holomorphic at $A$: the first case reduces to the already proved regularity of $U(uf)$, while in the second case the factor~$t\partial_t$ killing constants is not needed since we apply it to a series in~$t$ with vanishing constant term. Therefore, the singular part comes only from replacing $E_A$ by $1$, replacing $e^{v\cS(v\hbar)t}$ by $e^{vt}$, and taking $k=g$. We obtain
	\begin{align}
		[\hbar^{2g}]U(1)
		&=c_g\sum_{j\ge0}\partial_x^j[v^j]\Bigg(\frac{dw}{dx}\frac{1}{w^{2g}}
		\left.\Big(e^{-vt}
		(t\partial_t-1)\cdots(t\partial_t-2g+1)e^{vt}\Big)\right|_{t=R(z)}\Bigg)
		+(\mathrm{holomorphic}),
	\end{align}
	where $c_g\coloneqq[\hbar^{2g}]\,\cS(\hbar)^{-1}$. Since
	\begin{equation}
		e^{-vt}(t\partial_t-1)\cdots(t\partial_t-2g+1)e^{vt}
		=\sum_{j=0}^{2g-1}(-1)^{j+1}\frac{(2g-1)!}{j!}v^jt^j
	\end{equation}
	we obtain
	\begin{equation}\label{eq:U1-singular-form}
		[\hbar^{2g}]U(1)
		=c_g\sum_{j=0}^{2g-1}(-1)^{j+1}\frac{(2g-1)!}{j!}
		\partial_x^j\left(\frac{dw}{dx}\frac{R(z)^j}{w^{2g}}\right)
		+(\mathrm{holomorphic}).
	\end{equation}
Comparing with the required claim we see that the second assertion of Lemma follows from the equality
\begin{equation}\label{eq:lemR0proof-ident}
\sum_{j=0}^{m-1}(-1)^{m-1-j}\frac{(m-1)!}{j!}
		\partial_x^j\left(\frac{dw}{dx}\frac{R(z)^j}{w^{m}}\right)=
\partial_x^{m}\log(z-A)+(\mathrm{holomorphic})
\end{equation}
that holds for any integer $m\ge1$. In fact, we need this equality for $m=2g$ but let us prove it for any positive~$m$.

Set $\tilde x(z)=x(z)-x(A)$. Then, we have $R(z)=w(z)-\tilde x(z)$ and the left hand side of~\eqref{eq:lemR0proof-ident} can be written as
\begin{equation}\label{eq:U1contributions}
\begin{aligned}
\sum_{j=0}^{m-1}&(-1)^{m-1-j}\frac{(m-1)!}{j!}
		\partial_{\tilde x}^j\left(\frac{dw}{d\tilde x}\frac{(w-\tilde x)^j}{w^{m}}\right)
\\&=\sum_{i+\ell\le m-1}(-1)^{m-1-i}\frac{(m-1)!}{i!\ell!}
		\partial_{\tilde x}^{i+\ell}\left(\frac{dw}{d\tilde x}w^{i-m}\tilde x^\ell\right)
\\&=\sum_{k=0}^{m-1}\sum_{\ell=0}^k(-1)^{k}\frac{(m-1)!}{(m-k-1)!\ell!}
		\partial_{\tilde x}^{m-k+\ell-1}\left(\frac{dw}{d\tilde x}w^{-k-1}\tilde x^\ell\right).
\end{aligned}
\end{equation}
The term of this sum with $k=\ell=0$ is equal to
\begin{equation}
		\partial_{\tilde x}^{m-1}\left(\frac{dw}{d\tilde x}w^{-1}\right)
=\partial_{\tilde x}^{m}\log w=\partial_{\tilde x}^{m}\log (z-A)
+(\mathrm{holomorphic}),
\end{equation}
that is, it coincides with the right hand side of~\eqref{eq:lemR0proof-ident}. For the remaining terms of~\eqref{eq:U1contributions} we use the Leibniz rule $\partial_{\tilde x} f\;g=\partial_{\tilde x}(f\,g)-f\partial_{\tilde x} g$ to represent them as
\begin{multline}\label{eq:U1contrib2}
\sum_{k=1}^{m-1}\sum_{\ell=0}^k(-1)^{k}\frac{(m-1)!}{(-k)(m-k-1)!\ell!}
		\partial_{\tilde x}^{m-k+\ell-1}\left(\partial_{\tilde x}w^{-k}\;\tilde x^\ell\right)
\\=
\sum_{k=1}^{m-1}\sum_{\ell=0}^k(-1)^{k}\frac{(m-1)!}{(-k)(m-k-1)!\ell!}
		\partial_{\tilde x}^{m-k+\ell}\left(w^{-k}\tilde x^\ell\right)
\\-\sum_{k=1}^{m-1}\sum_{\ell=1}^k(-1)^{k}\frac{(m-1)!}{(-k)(m-k-1)!(\ell-1)!}
		\partial_{\tilde x}^{m-k+\ell-1}\left(w^{-k}\;\tilde x^{\ell-1}\right).
\end{multline}
The terms of the first sum with $\ell\le k-1$ are cancelled by the corresponding terms of the second summand. Finally, the terms of the first sum with $\ell=k$ are in fact holomorphic. We conclude that the whole sum~\eqref{eq:U1contrib2} is holomorphic. This completes the proof of~\eqref{eq:lemR0proof-ident} that implies, in turn, the second assertion of Lemma.
\end{proof}

The remaining term in~\eqref{eq:Wg1} is easier.

\begin{lemma}\label{lem:w1sectermzerosR}
	Let $A$ be a zero of $R(z)$. For every $g\ge0$, the expression
	\begin{equation}\label{eq:fam3w1secondterm}
		[\hbar^{2g}]\sum_{j=0}^\infty \partial_{x_1}^{j}\left([v^{j+1}] e^{ v\left(\cS(v\,\hbar)-1\right)R(z_1)}\; \partial_{x_1}y(z_1)\right)
	\end{equation}
	is regular at $z_1=A$.
\end{lemma}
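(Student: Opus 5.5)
We plan to show that every term is regular by exploiting the factor $R(z_1)$ in the exponent. Near $z_1=A$ the only singular ingredient is $\partial_{x_1}y(z_1)=\partial_{x_1}\log R(z_1)=R'(z_1)/(R(z_1)\,x'(z_1))$. Here $R'$ and $1/x'$ are regular at $A$, because $dx$ is nonvanishing at $A$ (zeros of $dx$ are disjoint from zeros of $R$). This factor therefore has at most a simple pole, with singular part $\partial_{x_1}y = \frac{1}{R}\cdot(\text{regular})$. The other factor is regular, and the task is to show it vanishes at $A$ to first order.

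For fixed $g$ and $j$, we expand $e^{v(\cS(v\hbar)-1)R}$ in powers of $\hbar$. Since $\cS(v\hbar)-1=\sum_{k\ge1}\frac{(v\hbar)^{2k}}{2^{2k}(2k+1)!}$ has no $\hbar^0$ term, we get
\begin{equation*}
e^{v(\cS(v\hbar)-1)R}=1+\sum_{\ell\ge1}\frac{R^\ell}{\ell!}\,v^\ell\bigl(\cS(v\hbar)-1\bigr)^\ell .
\end{equation*}
The constant term $1$ contributes only to $[v^0]$, so it never enters the coefficient $[v^{j+1}]$ with $j\ge0$. Consequently $[v^{j+1}]e^{v(\cS(v\hbar)-1)R(z_1)}$ is a polynomial in $R(z_1)$ with zero constant term, i.e.\ it equals $R(z_1)\,P_{j,\hbar}(R(z_1))$ with $P_{j,\hbar}$ polynomial. (At fixed $[\hbar^{2g}]$ only finitely many $j$ contribute: the $\ell$-th term has $v$-degree equal to its $\hbar$-degree plus $\ell$, hence at most $2g+g$.)

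Multiplying, the argument of $\partial_{x_1}^j$ becomes
\begin{equation*}
R\,P_{j,\hbar}(R)\cdot\frac{R'}{R\,x'}=P_{j,\hbar}(R)\,\frac{R'}{x'},
\end{equation*}
which is regular at $A$. Since $x_1$ is a local coordinate at $A$, the operators $\partial_{x_1}^j=(x'(z_1))^{-1}\partial_{z_1}$ iterated preserve regularity. The finite sum over $j$ extracted at order $\hbar^{2g}$ is therefore regular, which gives the claim.

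The only point needing care is the observation that the exponent's $\hbar^0$ part vanishes, which forces each extracted coefficient to carry a full power of $R$. The remaining steps are bookkeeping: simple zeros of $R$ give at most a simple pole of $dy$, and the finiteness of the sum at fixed order $\hbar^{2g}$ makes the argument termwise.
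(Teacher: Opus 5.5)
Your proposal is correct and follows the paper's proof. The paper argues exactly this way: every coefficient of a positive power of $v$ in $e^{v(\cS(v\hbar)-1)R(z_1)}$ carries a factor $R(z_1)$, and $R(z_1)\,\partial_{x_1}y(z_1)$ is regular at $A$. Your extra bookkeeping (why $\partial_{x_1}$ preserves regularity at $A$, and why the $j$-sum is finite at fixed order in $\hbar$) makes explicit what the paper leaves implicit.
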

\begin{proof}
The coefficient of any positive power of~$v$ in $e^{ v\left(\cS(v\,\hbar)-1\right)R(z_1)}$ has necessarily a factor $R(z_1)$, and $R(z_1)\partial_{x_1}y(z_1)$ is regular at $A$. Hence~\eqref{eq:fam3w1secondterm} is regular at $A$.
\end{proof}

\subsection{Poles of \texorpdfstring{$R(z)$}{R(z)}}

We now analyze the behavior at a pole of $R(z)$.

\begin{lemma}\label{lem:fam3polesR}
	Let $B$ be a pole of $R(z)$. For $2g-2+n>0$, the differential $\omega_n^{(g)}$ is regular in $z_1$ at $z_1=B$.
\end{lemma}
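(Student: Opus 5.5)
The plan is to mirror the proof of Lemma~\ref{lem:UzerosR}, using the explicit formulas of Proposition~\ref{prop:fam3explicitf}, and to show that every term in~\eqref{eq:mainprop} and~\eqref{eq:Wg1} is regular at $z_1=B$ by counting orders of vanishing and of poles in a local coordinate at $B$.

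\textbf{Local data at $B$.} Since $R$ has a simple pole at $B$, the function $x=\log z-(R-1)$ has a simple pole there and $dx$ has a double pole. Two consequences follow.
\begin{itemize}
\item For a local coordinate $\zeta$ at $B$ we have $\partial_x=O(\zeta^2)\partial_\zeta$. Hence each application of $\partial_x$ raises the order of vanishing at $B$ by at least one, and $\frac{d\log z}{dx}$ vanishes to order $2$.
\item The function $y=\log R=-\log w+\eta_B$ has a logarithmic singularity. Here $w=\log z-\log B$ and $\eta_B$ is regular, and up to a unit $1/R$ is proportional to $w$.
\end{itemize}

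\textbf{Factors of $U_1$.} Apply Corollary~\ref{cor:zdzulog} with $u\mapsto -u$ (because of the minus sign in front of the logarithm). The exponential factor in~\eqref{eq:Uihbar} then produces, at order $\hbar^{2k}$, the term $w^{-2k}$ times the rising factorial $u(u+1)\cdots(u+2k-1)$ times a factor regular in $z$ and polynomial in $u$. Extracting $[u^r]$ and pairing it with $(t\partial_t)^r$ turns this rising factorial into $t\partial_t(t\partial_t+1)\cdots(t\partial_t+2k-1)$. This operator acts on the polynomial in $t$ given by $[v^j]e^{-vt}(\cdot)e^{v\cS(v\hbar)t}$.

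\textbf{Weight count.} I would assign each term the weight (pole order at $B$) $=\deg_t+2k-j-2$. The degree $\deg_t$ counts powers of $R\sim\zeta^{-1}$, the term $2k$ comes from $w^{-2k}$, $-j$ comes from $\partial_x^j$, and $-2$ comes from $\frac{d\log z}{dx}$. Each $\hbar^2$-correction of $e^{v\cS(v\hbar)t}$ carries $v^3t$ and so lowers $\deg_t-j$ by $2$. The aim is to show that this total is $\le 0$ after extracting the fixed coefficient $[\hbar^{2g-2+2n}]$.

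\textbf{Main obstacle.} The difficulty is the $w^{-2k}$ poles. At a zero of $R$, the falling factorial annihilated low powers of $t$ and produced zeros of $R$. At a pole of $R$, the rising factorial annihilates nothing, so naive counting gives pole order up to $2k-2$. My first attempt would be to rewrite the operator in terms of $s=1/t$. Since $t\partial_t=-s\partial_s$, the rising factorial becomes a falling factorial in $s\partial_s$, which kills low powers of $s$ and thus produces zeros in $\zeta$. This requires expanding $e^{-vt}(t\partial_t)^re^{v\cS(v\hbar)t}$ suitably around $t=\infty$, that is, recombining the $j$-sum with $\partial_x^j$ as in the identity~\eqref{eq:lemR0proof-ident}. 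I expect a Leibniz-rule telescoping of the same kind as in~\eqref{eq:U1contrib2} to cancel the excess pole orders. This is the step I regard as the real work.

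\textbf{Remaining contributions.}
\begin{itemize}
\item The edge factors $w_{k,\ell}$ in~\eqref{eq:wkl} are regular at $z_k=B$ when $z_\ell\neq B$, since $z\partial_z$ is a regular vector field there. They only contribute powers of $u_1$, and these are absorbed into the same counting.
\item For the second term of~\eqref{eq:Wg1}, each coefficient $v^{j+1}$ comes with $\hbar^{2a}R^{b}$ where $b\le j+1-2a$, and $\partial_xy$ vanishes to order $1$. The pole order is then at most $b-1-j\le -2a\le 0$, so this term is regular.
\end{itemize}
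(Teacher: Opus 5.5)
There is a genuine gap exactly at the step you call ``the real work'': the cancellation of the excess poles produced by $w^{-2k}$ is never carried out, and the mechanism you suggest for it cannot work. After Corollary~\ref{cor:zdzulog} with $u\mapsto -u$, the order-$\hbar^{2k}$ contribution of the $-\log w$ part of $y$ carries the prefactor $\frac{d\log z}{dx}\,w^{-2k}$, which has a pole of order $2k-2$. After conjugation by $e^{-vt}$ it also carries a rising factorial in the operator $t\partial_t+tv$. This is applied to monomials $v^pt^q$ with $p\ge q\ge 0$ and then fed into $\sum_j\partial_x^j[v^j](\cdot)|_{t=R(z)}$. Neither this operator nor its rewriting in $s=1/t$ annihilates anything beyond constants: $t\partial_t+m=-s\partial_s+m$ just multiplies $t^q=s^{-q}$ by $q+m$, and the $tv$-part only raises degrees. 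The cancellation is not a property of the $t$-operator alone. It comes from the interplay with $\sum_j\partial_x^j[v^j]$ and from the relation $x=-R+\log z+1$, i.e.\ $\frac{dx}{dR}+1=\frac{1}{zR'}=O((z-B)^2)$, which your plan never uses. The paper's key observation is $\sum_j\partial_x^j[v^j](vG)=\partial_x\sum_j\partial_x^j[v^j]G$. Hence inside this sum one may replace $t\partial_tF$ by $tv\frac{dx}{dR}F-\partial_x\bigl(R\frac{dx}{dR}F\bigr)$. Then $\rho_r\,(t\partial_t+tv+r+1)$ turns into $tv\,\rho_r\bigl(\frac{dx}{dR}+1\bigr)+\bigl((r+1)\rho_r-\partial_x(R\frac{dx}{dR}\rho_r)\bigr)$. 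Both new coefficients have pole order at most $r-1$, since the order-$r$ poles in the second one cancel. Induction on $r$ strips off all these factors and ends with $\partial_x^p(\rho_{-1}R^q)$, $p\ge q$, which vanishes at $B$. Without this, or a telescoping identity you have actually established, your argument does not get past the naive pole bound $2k-2$.

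Second, even granting the cancellation, your target is too weak. Your weight measures the pole order of $\omega^{(g)}_n/\prod_i dx_i$, and you aim for weight $\le 0$. But $dx_1$ has a double pole at $B$, so a regular quotient only gives $\omega^{(g)}_n$ with a possible double pole. The paper's term-by-term count certifies a simple zero of the quotient, hence at most a simple pole of $\omega^{(g)}_n$. It then closes the argument with an ingredient missing from your plan: by \cite[Theorem~5.3]{BDKS1}, $\omega^{(g)}_n=d_1\cdots d_nH^{(g)}_n$ with $H^{(g)}_n$ meromorphic, and the differential of a meromorphic function has no simple poles. For the second term of \eqref{eq:Wg1} your own count is actually better than you state: $j+1=2a+b$ with $a\ge b$ forces $a\ge1$, so the order is at most $-2$, and that term gives a regular differential directly.
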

\begin{proof}
	Let $U$ denote the operator~\eqref{eq:Uihbar} with the index $i$ suppressed. Since $R$ has a simple pole at $B$, formula~\eqref{eq:fam3xz} shows that $x$ also has a simple pole there and $dx$ has a pole of order~$2$.
Using Corollary~\ref{cor:zdzulog} with $u\mapsto -u$, the same rearrangement as in the proof of Lemma~\ref{lem:UzerosR} shows that every coefficient $[\hbar^{2m}]Uf$ is a finite linear combination of terms obtained from
	\begin{multline}\label{eq:poles-master}
		\sum_{j\ge0,k \geq 1}\hbar^{2k}\partial_x^j\Bigg(\rho_{2k-2}(z)
		\left.\Big([v^j]\widetilde f(t\partial_t+tv,z,\hbar)
		(t\partial_t+tv+1)\cdots(t\partial_t+tv+2k-1)e^{v(\cS(v\hbar)-1)t}\Big)\right|_{t=R(z)}\Bigg)\\
		+\sum_{j\ge0}\partial_x^j\Bigg(\rho_{-2}(z)\left.\Big([v^j]\widetilde f(t\partial_t+tv,z,\hbar)
		e^{v(\cS(v\hbar)-1)t}\Big)\right|_{t=R(z)}\Bigg),
\end{multline}
	where $\widetilde f$ is regular in $z$ at~$B$ and polynomial in its first argument while $\rho_{2k-2}(z)$ has a pole of order at most~$2k-2$, and $\rho_{-2}(z)$ has a zero of order at least 2.

After expanding the regular coefficients in \eqref{eq:poles-master}, it is enough to consider expressions
\begin{equation}
T_r\coloneqq\sum_{j\ge0}\partial_x^j[v^j]\bigl(\rho_r(z)\,D_{r+1}\cdots D_1\,v^p t^q\bigr)\Big|_{t=R(z)},
\qquad r\ge -1,\quad p\ge q,	
\end{equation}
where for $r=-1$ the empty product $D_{r+1}\cdots D_1$ is understood as $1$, $D_k\coloneqq t\partial_t+t v+k$, and
$\rho_r(z)$ has a pole of order at most $r$ for $r\ge0$, while $\rho_{-1}(z)$ has at least a
simple zero at $B$. 

We claim that every $T_r$ is holomorphic at $B$ and has at least a simple zero there.

For $r=-1$ we have
\begin{equation}
T_{-1}=\partial_x^p\bigl(\rho_{-1}(z)R(z)^q\bigr).
\end{equation}
Since $R$ has a simple pole at $B$ and $\partial_x$ raises the order at $B$ by $1$ (because $dx$
has a double pole there), while $\rho_{-1}$ has at least a simple zero, the condition $p\ge q$
implies that $T_{-1}$ has at least a simple zero at $B$.

For $r\ge0$ we use the following identity: for any function $F(z,t,v)$ polynomial in~$z$ and~$v$ we have
\begin{multline}
\sum_{j\ge0}\partial_x^j[v^j]\bigl(t\partial_t F(z,t,v)\bigr)\Bigm|_{t=R(z)}
\\=\sum_{j\ge0}\partial_x^j[v^j]\left(
\Bigl(\partial_x+\frac{dR}{dx}\partial_t\Bigr)
R\frac{dx}{dR}F-\frac{\partial\bigl(R\tfrac{dx}{dR}F\bigr)}{\partial x}
\right)\Bigm|_{t=R(z)}
\\=\sum_{j\ge0}\partial_x^j[v^j]\left(
t\,v\,\frac{dx}{dR}F-\frac{\partial\bigl(R\tfrac{dx}{dR}F\bigr)}{\partial x}
\right)\Bigm|_{t=R(z)}.
\end{multline}
From this identity, denoting $W_{r-1}(t,v)=D_{r}\dots D_{1}v^pt^q$ we compute:
\begin{multline}
T_r=\sum_{j\ge0}\partial_x^j[v^j]\Bigl(\rho_r(z)(t\partial_t+v t+(r+1))W_{r-1}(v,t)\Bigr)\Bigm|_{t=R(z)}
\\=
\sum_{j\ge0}\partial_x^j[v^j]\Bigl(\Bigl(t\,v\,\rho_r(z)\left(\tfrac{dx}{dR}+1\right)+
\bigl((r+1)\rho_r(z)-\frac{d(R\tfrac{dx}{dR}\rho_r(z))}{dx}
\Bigr)W_{r-1}(t,v)\Bigr)\Bigm|_{t=R(z)}.
\end{multline}
Let us show that the factor in front of $W_{r-1}$ has a pole of order at most $r-1$. Indeed, we have $x(z)=-R(z)+\text{(reg)}$, so $\tfrac{dx}{dz} + \tfrac{dR}{dz} = O(1)$, but both $\tfrac{dx}{dz}$ and $\tfrac{dR}{dz}$ have double poles at $B$. Therefore $\tfrac{dx}{dR}+1=\tfrac{\tfrac{dx}{dz} + \tfrac{dR}{dz}}{\tfrac{dR}{dz}} = O((z-B)^2)$ and the whole summand $tv\rho_r(z)\bigl(\tfrac{dx}{dR}+1)$ has a pole of order at most~$r-1$.

By a direct local computation, one can see that the leading terms of poles (of order~$r$) of the two summands in $(r+1)\rho_r(z)-\frac{d\bigl(R\tfrac{dx}{dR}\rho_r(z)\bigr)}{dx}$ cancel each other and the result has a pole of order at most~$r-1$.

We obtain that any function of type $T_r$ can be rewritten as a function of type $T_{r-1}$. By induction, we conclude that $T_r$ is holomorphic with at least simple zero.
	
	Thus $[\hbar^{2m}]Uf$ has at least a simple zero at $B$ whenever $f$ is regular in $z$ at $B$ and polynomial in $u$. Note that each product $\prod_{\{v_k,v_\ell\}\in E_\gamma}w_{k,\ell}$ in~\eqref{eq:mainprop} is of this form.

	Now consider the second term in~\eqref{eq:Wg1}. Since $\cS(v\hbar)-1=O(v^2)$, its coefficient of $\hbar^{2g}$ is a finite linear combination of terms of the form
	\begin{equation}
		\partial_x^j\bigl(R(z)^i\partial_xy(z)\bigr),\qquad i\le j.
	\end{equation}
	The same pole-counting argument shows that these terms have at least a simple zero at $B$ as well.
	
	What remains is to note that after multiplication by $dx_1\cdots dx_n$ the resulting $\omega^{(g)}_n$ is holomorphic. Consider a given variable $z_i$. Since $dx(z_i)$ has a pole of order 2 at $z_i=B$, after the multiplication by $dx(z_i)$ the resulting expression can have at most a simple pole in $z_i$ at $z_i=B$. But~\cite[Theorem~5.3]{BDKS1} implies that $\omega^{(g)}_n=d_1\dots d_n H^{(g)}_{n}(z_1,\dots, z_n)$ where $H^{(g)}_{n}$ is a meromorphic function, and the differential of a meromorphic function can never have a simple pole. Thus  $\omega^{(g)}_n$ is holomorphic.
	This proves the lemma.

\end{proof}

\begin{remark}
	In Lemmas~\ref{lem:UzerosR} and~\ref{lem:fam3polesR} it is essential that we use precisely the $\hbar$-deformation~\eqref{eq:fam3y}. Otherwise Corollary~\ref{cor:zdzulog} would not apply, and the cancellations at the zeros and poles of $R$ would be lost.
\end{remark}

\subsection{Zero and infinity}

\begin{lemma}\label{lem:fam3zero}
	For $2g-2+n>0$, the differential $\omega_n^{(g)}$ is regular in $z_1$ at $z_1=0$.
\end{lemma}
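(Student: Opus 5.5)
We propose to analyze the explicit formulas of Proposition~\ref{prop:fam3explicitf} near $z_1=0$ in the same spirit as Lemma~\ref{lem:fam3polesR}, but now using the special structure of the point $0$. Near $z=0$ we have $R(z)=1+O(z)$, so $y(z)=\log R(z)=O(z)$ is holomorphic. Also $x=\log z-(R(z)-1)$, so $X=e^x$ is a local coordinate with $X\sim z$, and $\partial_x=\frac{z}{1-zR'(z)}\partial_z$ is a holomorphic vector field vanishing at $0$. In particular $\frac{d\log z}{dx}$ is holomorphic at $0$. Hence every operator entering $U_1$ preserves the space of functions holomorphic at $0$. This holds for $\partial_x^j$, for multiplication by the polynomials in $t=R(z)$, $v$ coming from $e^{-vt}(t\partial_t)^re^{v\cS(v\hbar)t}$, and for the factor $e^{u(\cS(u\hbar z\partial_z)/\cS(\hbar z\partial_z)-1)y(z)}$. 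The last claim uses the fact that $z\partial_z$ preserves holomorphy and $y$ is holomorphic at $0$; unlike at zeros of $R$, no $\log$-singularity is present here.

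The key steps are as follows. First, we check that for $n\ge2$ each weight $w_{1,\ell}$ in \eqref{eq:wkl} is holomorphic in $z_1$ at $0$ for generic $z_\ell\neq0$. This holds because $\frac{z_1z_\ell}{(z_1-z_\ell)^2}$ vanishes at $z_1=0$, and $z_1\partial_{z_1}$ keeps it holomorphic. The $\hbar$-expansion coefficients are therefore holomorphic in $z_1$ and polynomial in $u_1$. Second, applying $U_1$ to such a function gives, coefficientwise in $\hbar$, a finite sum of holomorphic expressions. The $[u^r]$ extraction and the sum over $j$ are finite for a fixed $\hbar$-degree, because each positive power of $v$ beyond $t$-terms comes with $\hbar^2$, or, more precisely, because the degree in $\hbar$ bounds $r$ and $j$ as in \cite{bychkov2021topological}. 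Then $\omega^{(g)}_n/\prod dx_i$ is holomorphic in $z_1$ at $0$. Third, for $n=1$ we treat the second term of \eqref{eq:Wg1} the same way, since $R$ and $\partial_xy$ are holomorphic at $0$. Fourth, we multiply by $dx_1=\frac{(1-zR')dz_1}{z_1}$, which has a simple pole at $0$. The result has at most a simple pole at $z_1=0$, and the argument via \cite[Theorem~5.3]{BDKS1} used at the end of Lemma~\ref{lem:fam3polesR} excludes a residue-carrying simple pole: $\omega^{(g)}_n=d_1\cdots d_nH^{(g)}_n$, so it cannot have a simple pole. Hence $\omega^{(g)}_n$ is regular.

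As a cross-check and alternative, we note that the expansion \eqref{OSdif} of $\omega^{(g)}_n$ near $z_1=0$ in $X_1$ contains only $d(X_1^{\mu})$ with $\mu\ge1$, which is manifestly holomorphic at $X_1=0$. Since $X_1$ is a local coordinate at $z_1=0$, this alone gives regularity, provided we know that the global meromorphic differential agrees with this convergent or formal expansion, as stated after Definition~\ref{def:OSdifferentials}. We would present this short argument as the main proof, with the explicit-formula analysis above as backup.

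The main obstacle is the rigor of the finiteness and holomorphy claims in the second step: one must check that for fixed $\hbar$-degree only finitely many $(j,r)$ contribute. Equivalently, one must justify that the identification of the formal $X$-expansion with the global differential is legitimate at $z=0$. For the explicit-formula route, the first thing we would try is to track the $\hbar$-grading of the $U$-operator exactly as in \cite{bychkov2021topological}.
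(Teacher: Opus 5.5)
Your fallback argument is exactly the paper's proof. The paper notes that $y$, the weights $w_{1,\ell}$ and $\frac{d\log z_1}{dx_1}=\frac{1}{1-z_1R'(z_1)}$ are regular at $z_1=0$, and that $\partial_{x_1}=\frac{z_1}{1-z_1R'(z_1)}\partial_{z_1}$ preserves regularity. It then removes the possible simple pole created by $dx_1$ using the no-simple-pole consequence of \cite[Theorem~5.3]{BDKS1}.

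The argument you want to lead with is genuinely different, and it is also correct. You read regularity off \eqref{OSdif}, where for $2g-2+n>0$ only $d(X_i^{\mu_i})$ with $\mu_i\ge 1$ occur. You combine this with the identification of that series with the expansion of the global differential at $z_1=\dots=z_n=0$. This is shorter, and it makes clear that regularity at $0$ is built into the construction. However, it outsources all the content to that identification, which itself rests on the same explicit formulas read as identities of expansions at the origin.

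It also needs one extra line that you did not supply. The identification is a joint expansion at the origin, so it only gives holomorphy on a polydisc around $(0,\dots,0)$. To get regularity in $z_1$ at $z_1=0$ for generic fixed $z_2,\dots,z_n$, argue as follows. The polar locus of the global differential is a closed analytic hypersurface. If it contained $\{z_1=0\}$, it would contain the origin. Otherwise it meets $\{z_1=0\}$ in a proper analytic subset. Alternatively, use the fact recalled at the start of the appendix: there are no diagonal poles, and all poles lie on hyperplanes $z_i=\mathrm{const}$. The paper's route handles arbitrary fixed $z_2,\dots,z_n$ directly. It also runs parallel to Lemmas~\ref{lem:fam3polesR} and~\ref{lem:fam3infty}, where no expansion shortcut is available.

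Finally, the finiteness issue you flag as the main obstacle is harmless. In $u\bigl(\frac{\cS(u\hbar z\partial_z)}{\cS(\hbar z\partial_z)}-1\bigr)$, $\frac{1}{\cS(u\hbar)}$ and $v(\cS(v\hbar)-1)$, each $\hbar^{2k}$ carries at most $2k+1$ powers of $u$ or $v$. Hence at fixed $\hbar$-degree only finitely many $(j,r)$ contribute, and the paper uses this tacitly throughout.
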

\begin{proof}
	The expressions $\omega_n^{(g)} / \prod dx_i$ given by~\eqref{eq:mainprop} and~\eqref{eq:Wg1} are regular at $z_1=0$. Indeed, the factors $w_{k,\ell}$ are regular at $z_1=0$ and $y(z_1)=\log R(z_1)$ is regular at $z_1=0$ because $R(0)=1$. The prefactor in~\eqref{eq:Uihbar} is
	\begin{align}
		\frac{d\log z_1}{dx_1} &= \frac{1}{1-z_1R'(z_1)},
	\end{align}
	which is regular at $z_1=0$. The differential operator
	\begin{align}
		\partial_{x_1} &= \frac{z_1}{1-z_1R'(z_1)}\partial_{z_1}
	\end{align}
	preserves regularity. 
	
	To obtain $\omega^{(g)}_n$, we multiply by $dx_1\cdots dx_n$. We have
	\begin{align}
		dx_1 &= \frac{1-z_1R'(z_1)}{z_1}dz_1.
	\end{align}
	Multiplication by $dx_1$ produces at worst a simple pole at $z_1=0$. As noted in the proof of Lemma~\ref{lem:fam3polesR}, the differentials $\omega^{(g)}_n$ cannot have simple poles. Therefore, $\omega^{(g)}_n$ is regular at $z_1=0$.
\end{proof}

\begin{lemma}\label{lem:fam3infty}
	For $2g-2+n>0$, the differential $\omega_n^{(g)}$ is regular in $z_1$ at $z_1=\infty$.
\end{lemma}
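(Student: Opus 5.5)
We follow the scheme of Lemmas~\ref{lem:fam3zero} and~\ref{lem:fam3polesR}. First we check that $\omega^{(g)}_n/\prod_i dx_i$ vanishes at $z_1=\infty$ to sufficiently high order, then multiply by $dx_1$, and finally use the fact that $\omega^{(g)}_n=d_1\cdots d_nH^{(g)}_n$ (\cite[Theorem~5.3]{BDKS1}) cannot have a simple pole.

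Several cases must be distinguished according to the value $R(\infty)$:
\begin{itemize}
\item If $R(\infty)$ is finite and nonzero, then $x=\log z-R(z)+1$, so $dx_1=(1/z_1-R'(z_1))\,dz_1$ has a simple pole at $\infty$ in the local coordinate $\zeta=1/z_1$. Moreover $\partial_{x_1}=\frac{z_1}{1-z_1R'}\partial_{z_1}$ behaves like $-\zeta\partial_\zeta$ up to a regular invertible factor, so it preserves regularity. Also $y=\log R$ is regular at $\infty$.
\item If $R(\infty)=0$, the point $\infty$ is a zero of $R$. By the assumption that zeros of $R$ are simple this case must be handled like Lemma~\ref{lem:UzerosR}. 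However, $\log z$ is singular there and $\infty$ is then not LogTR-vital, since $dx$ has a pole at $\infty$.
\item If $R$ has a pole at $\infty$, the situation mirrors Lemma~\ref{lem:fam3polesR}.
\end{itemize}

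\textbf{Case $R(\infty)\ne0,\infty$.} We argue exactly as in Lemma~\ref{lem:fam3zero}. The factors $w_{k,\ell}$ are built from $\frac{z_kz_\ell}{(z_k-z_\ell)^2}$ acted on by powers of $z\partial_z$, so they are regular at $z_1=\infty$; indeed $\frac{z_1z_\ell}{(z_1-z_\ell)^2}$ vanishes there. The operator $U_1$ involves $\frac{d\log z_1}{dx_1}=(1-z_1R'(z_1))^{-1}$, which is regular at $\infty$ since $z_1R'(z_1)\to0$. It also involves $R(z_1)$, regular, and the exponent $u(\cS(u\hbar z\partial_z)/\cS(\hbar z\partial_z)-1)y$, which is regular because $z\partial_z$ preserves regularity at $\infty$. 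The same holds for the second term of~\eqref{eq:Wg1}. After multiplying by $dx_1$ we get at most a simple pole, which is excluded. Hence $\omega^{(g)}_n$ is regular at $\infty$.

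\textbf{Case $R$ has a simple pole at $\infty$.} We copy the proof of Lemma~\ref{lem:fam3polesR} with $B=\infty$ and local coordinate $\zeta=1/z$. Now $x=\log z-R+1$ has a simple pole plus a $\log$ term, and $dx$ has a double pole in $\zeta$. The key facts used there are:
\begin{itemize}
\item $x=-R+(\text{mild})$, hence $\frac{dx}{dR}+1=O(\zeta^2)$, which still holds because $d\log z/dR=O(\zeta^2)$;
\item $\partial_x$ raises vanishing order by one.
\end{itemize}
In applying Corollary~\ref{cor:zdzulog}, the decomposition $y=\log(\text{local coordinate})+\eta$ must be replaced by $y=-\log\zeta+\eta$. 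We then invoke the lemma with $u\mapsto -u$ as before, and the remaining steps go through verbatim.

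\textbf{Case $R(\infty)=0$.} We write $y=\log\zeta+\eta$ with $w=\log\zeta$ playing the role of the local log coordinate; note $z\partial_z=-\zeta\partial_\zeta$. The argument of Lemma~\ref{lem:UzerosR} then gives that $[\hbar^{2m}]U(uf)$ is regular. For $U(1)$ and the extra term in~\eqref{eq:Wg1}, $\infty$ is not LogTR-vital, so we need full regularity rather than a prescribed principal part. Since $dx\sim d\log z$ has only a simple pole here, we expect $\omega/dx$ to acquire at least a simple zero, so that multiplication by $dx_1$ keeps things regular.

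The main obstacle is this last case and the pole case at $\infty$. There the logarithmic term $\log z$ in $x$ interacts with the singularity of $R$, so the local bookkeeping (order of vanishing gained by each $\partial_x$, and the cancellation of leading poles) must be redone. We would first try to reduce these cases via the symmetry $z\mapsto1/z$ and an adjusted $R$, checking that the order-counting inequalities of Lemmas~\ref{lem:UzerosR} and~\ref{lem:fam3polesR} survive.
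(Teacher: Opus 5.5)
Your case $R(\infty)\neq0,\infty$ is correct and is exactly the paper's argument. The two remaining cases, however, are not proved, and the templates you propose for them do not transfer to $\infty$.

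In the pole case, the estimate you claim, $\frac{dx}{dR}+1=\frac{d\log z}{dR}=O(\zeta^2)$, is false. With $R=c/\zeta+O(1)$ one has $d\log z=-d\zeta/\zeta$ and $dR\sim-c\,d\zeta/\zeta^2$, so $\frac{d\log z}{dR}=\zeta/c+O(\zeta^2)$. At a finite pole $B$ the second order of vanishing came from $d\log z$ being holomorphic at $B$, and that fails at $\infty$. Hence the summand $tv\rho_r(\frac{dx}{dR}+1)$ can have pole order $r$, and the reduction $T_r\to T_{r-1}$ of Lemma~\ref{lem:fam3polesR} does not close.

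Corollary~\ref{cor:zdzulog} does not apply either. Near $\infty$ one has $y=\pm\log z+\eta$ with $\eta$ regular, not $\pm\log(\log z-\log A)+\eta$. So the mechanism of Lemmas~\ref{lem:UzerosR} and~\ref{lem:fam3polesR}, a factor $w^{-2k}$ compensated by a falling factorial in $t\partial_t$, has no counterpart there. In the case $R(\infty)=0$ you end with an expectation rather than an argument. The substitution $z\mapsto1/z$ also gives no reduction, since it does not preserve the shape $x=\log z-R+1$, $R(0)=1$.

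\textbf{The missing observation.} $z\partial_z=-\zeta\partial_\zeta$ is the Euler field at $\infty$. The operator $\frac{\cS(u\hbar z\partial_z)}{\cS(\hbar z\partial_z)}-1$ consists of positive even powers of $z\partial_z$, which annihilate the $\log z$ part of $y=\log R$. So the exponential factor in~\eqref{eq:Uihbar} is regular at $\infty$ for every value of $R(\infty)$, and no cancellation of the type in Corollary~\ref{cor:zdzulog} is needed.

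\textbf{Case $R(\infty)=0$.} The argument of Lemma~\ref{lem:fam3zero} then goes through unchanged. Indeed $\frac{d\log z}{dx}$, $\partial_x y$ and the action of $\partial_x$ are all regular there, and $dx_1$ has only a simple pole.

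\textbf{Pole of order $\Delta$ at $\infty$.} The paper replaces the $T_r$ induction by degree counting in $z$:
\begin{itemize}
\item $R=O(z^\Delta)$ and $\frac{d\log z}{dx}=O(z^{-\Delta})$;
\item $\partial_x=\frac{z}{1-zR'}\partial_z$ kills constants and lowers the degree by $\Delta$;
\item the coefficients of $e^{-vt}(t\partial_t)^re^{v\cS(v\hbar)t}$ are combinations of $v^\ell t^i$ with $i\le\ell$.
\end{itemize}
Hence every term $\partial_x^\ell\bigl(\frac{d\log z}{dx}R^i\,\mathrm{reg}\bigr)$ is $O(z^{-\Delta})$. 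After multiplying by $dx=O(z^{\Delta-1})dz$, at most a simple pole survives, and this is excluded by $\omega^{(g)}_n=d_1\cdots d_nH^{(g)}_n$. In the second summand of~\eqref{eq:Wg1}, $\cS(v\hbar)-1=O(v^2)$ forces $\ell\ge3i-1$, which gives regularity outright.
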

\begin{proof}
	Write $z=z_1$. If $R(z)$ has a finite limit at $\infty$, then after the change of variable $t=1/z$ the argument is exactly the same as in Lemma~\ref{lem:fam3zero} (if $R(\infty)=0$, the resulting logarithmic singularity of $y$ at $t=0$ is annihilated by the operator $\frac{\cS(u\hbar t\partial_t)}{\cS(\hbar t\partial_t)}-1$ in~\eqref{eq:Uihbar}, which consists of positive even powers of $t\partial_t$). We therefore assume that
	\begin{equation}
		R(z)=\frac{P_1(z)}{P_2(z)},\qquad \Delta\coloneqq\deg P_1-\deg P_2>0.
	\end{equation}
	Then
	\begin{equation}
		R(z) = O(z^\Delta),
		\qquad
		\frac{d\log z}{dx} =  O(z^{-\Delta}),
		\qquad
		\frac{dz}{dx} = O(z^{-\Delta+1})
		\qquad (z\to\infty).
	\end{equation}
	
	The product $\prod w_{k,\ell}$ is regular at $\infty$, and for every regular $f$ the factor
	\begin{equation}
		[\hbar^{2m}][u^r]\frac{e^{u(\frac{\cS(u\hbar z\partial_z)}{\cS(\hbar z\partial_z)}-1)y(z)}}{u\cS(u\hbar)}f(u,z,\hbar)
	\end{equation}
	is also regular at $\infty$, because the operator in the exponent contains at least one $\partial_z$ acting on $y(z)=\log R(z)$. Furthermore,
	\begin{equation}
		[\hbar^{2m}]\Bigl(e^{-vt}(t\partial_t)^re^{v\cS(v\hbar)t}\Bigr)\Bigm|_{t=R(z)}
	\end{equation}
	is a finite linear combination of monomials $v^\ell R(z)^i$ with $0\le i\le \ell$. Hence every coefficient $[\hbar^{2m}]Uf$ is a finite linear combination of terms of the form
	\begin{equation}
		\left(\frac{dz}{dx}\partial_z\right)^\ell\left(\frac{d\log z}{dx}R(z)^i\,\mathrm{reg}(z)\right),
		\qquad 0\le i\le \ell,
	\end{equation}
	where $\mathrm{reg}(z)$ is regular at $\infty$. Since
	\begin{equation}
		\frac{d\log z}{dx}R(z)^i=O(z^{(i-1)\Delta}),
	\end{equation}
	after applying $\left(\frac{dz}{dx}\partial_z\right)^{i-1}$ we obtain $c_0+c_1/z+O(z^{-2})$. If $\ell > 0$, one more application yields $O(z^{-\Delta-1})$. If $\ell=0$ (which implies $i=0$), the term is $O(z^{-\Delta})$. Therefore every such term is at worst $O(z^{-\Delta})$, and after multiplication by
	\begin{equation}
		dx = O(z^{\Delta-1})dz
	\end{equation}
	we get at worst $O(z^{-1})dz$, which corresponds to at worst a simple pole at $z=\infty$. As explained at the end of the proof of Lemma~\ref{lem:fam3polesR}, the differentials $\omega_n^{(g)}$ cannot have simple poles, so all possible simple poles must cancel in the end.
	
	For the second summand in~\eqref{eq:Wg1}, note that $\cS(v\hbar)-1=O(v^2)$. Hence the coefficient of $R(z)^i$ in
	\begin{equation}
		e^{v(\cS(v\hbar)-1)R(z)}
	\end{equation}
	contains at least $v^{3i}$. After extracting $[v^{j+1}]$, this yields terms of the form
	\begin{equation}
		\left(\frac{dz}{dx}\partial_z\right)^\ell\bigl(R(z)^i\partial_xy(z)\bigr),
		\qquad \ell\ge 3i-1,
	\end{equation}
	while $\partial_xy(z)=\frac{dz}{dx}\partial_z\log R(z)=O(z^{-\Delta})$. The same power counting therefore gives again $O(z^{-\Delta-1})$, and the product with $dx$ is regular at $\infty$. This proves the lemma.
\end{proof}

\begin{remark}
	At infinity it is essential that $\hat\psi$ is the deformation~\eqref{eq:fam3psi}; otherwise the $j=0$ term in the second summand of~\eqref{eq:Wg1} would produce a pole.
\end{remark}

\subsection{Proof of LogTR for Family~III}

\begin{proof}[Proof of Theorem~\ref{thm:fam3}]
	The loop equations hold by Proposition~\ref{prop:fam3loopeq}, so it remains to check the logarithmic projection property.
	
	By Lemmas~\ref{lem:fam3polesR}, \ref{lem:fam3zero}, and~\ref{lem:fam3infty}, the differentials $\omega_n^{(g)}$ have no poles at the poles of $R(z)$, at $0$, or at $\infty$. If $n\ge2$, then every term in~\eqref{eq:mainprop} is divisible by $u_1\cdots u_n$; hence the first part of Lemma~\ref{lem:UzerosR} applies and shows that $\omega_n^{(g)}$ is regular at the zeros of $R(z)$.
	
	For $n=1$, Lemma~\ref{lem:UzerosR} gives the singular part of the first term in~\eqref{eq:Wg1}, while Lemma~\ref{lem:w1sectermzerosR} shows that the second term is regular at the zeros of $R(z)$. Therefore the principal part of $\omega_1^{(g)}$ at every zero of $R(z)$ is exactly the one prescribed by~\eqref{eq:PrincipalPartsLog}. This proves the logarithmic projection property and completes the proof of Theorem~\ref{thm:fam3}.
\end{proof}

\section{Closed formulas for coefficients} \label{sec:formulas}
This section is devoted to computing the coefficients $\fc_k$, $u^*$, $\fc_k^*$, and $s_k^*$ of Theorem~\ref{prop:elsvformula} for the specific case of spectral curve~\eqref{eq:curvezr}, in order to derive Proposition~\ref{prop:elsvformulazr}.

All the functions of the spectral curve data~\eqref{eq:curvezr} are invariant under the action of the group $\Z_r$ of $r$-th roots of unity. Therefore, they can be expressed as single-valued functions of $t=z^r$. Moreover, all the residues entering the formulas for the coefficients are the same for all poles with common value of~$t$. It follows that they can be replaced by just a single residue of a differential on the $t$-line multiplied by the factor~$r$. In other words, all computations can be done entirely in the coordinate~$t=z^r$.
Let us summarize the definitions that we need to perform computations in terms of the coordinate $t$.

The equation of the spectral curve \eqref{eq:curvezr} in terms of~$t$ becomes
\begin{equation}
x(t)=\frac{1}{r}\log t-t,
\quad y(t)=\log(1+t).
\end{equation}
The differential $dx(t)=\frac{1-r t}{rt}dt$ has a single zero at the point $t=1/r$ of the $t$-line and $y(t)$ has a logarithmic singularity at $t=-1$. The definitions of other objects related to this spectral curve take in terms of~$t$ the following form:
\begin{align}
\xi_k(t)&=\xi_{r,k}=r^{-k}\partial_x^{k}\frac{1}{1-r\,t},\qquad \partial_x=\frac{r t}{1-r t}\partial_t,
\\C&=\frac{1}{r},
\\[-2ex]\Phi(t)&=(y-t)\,dx+\left(\res_{\tilde t=-1}\frac{\left(\left(\frac{1}{\cS(\hbar\partial_{x(\tilde t)})}-1\right)y(\tilde t)\right)
dx(\tilde t)}{t-\tilde t}\right)dt,
\\\label{eqB:hk0}
\fc_k&=d\xi_k * \Phi=-r\res_{t=1/r}\xi_k(t)\,\Phi(t)
.
\end{align}

Recall that $\Phi,\fc_k$ are expanded as power series in~$\hbar^2$, and for brevity we drop the dependence on~$\hbar$ from notations. A more explicit computation of these coefficients is provided by the following Proposition. Consider the following function (a formal power series)
\begin{equation}\label{eqB:Wu}
W(u,\hbar)=\frac{u}{r}-\sum_{m=0}^\infty \fc_m\frac{u^m}{m!}.
\end{equation}

Let us identify the variable $u$ with the local coordinate on the spectral curve near the point $t=-1$ defined by
\begin{equation}
u(t)=r\;(x(t)-x(-1))=\log(-t)-r(t+1)
\end{equation}
With the identification, $W$ becomes a function on the spectral curve (expanded in~$\hbar^2$). 
 
\begin{proposition}\label{prop:W-series}
The function~$W$ regarded as a function on the spectral curve is given by
\begin{equation}\label{eqB:Wt}
W(u,\hbar)=\frac{1}{\cS(\hbar\partial_x)}\log(-t).
\end{equation}
\end{proposition}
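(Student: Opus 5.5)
We would prove the identity by rewriting the whole generating series $\sum_m \fc_m u^m/m!$ as one residue, and then evaluating that residue by moving the contour from the branch point $t=1/r$ to the remaining singular points of the integrand. The first step is to use the definition~\eqref{eqB:hk0} together with $\xi_k=r^{-k}\partial_x^k\frac{1}{1-rt}$. This gives, as formal series in $u$,
\begin{equation*}
\sum_{m\ge0}\fc_m\frac{u^m}{m!}=-r\res_{t'=1/r}\Bigl(e^{\frac{u}{r}\partial_{x'}}\frac{1}{1-rt'}\Bigr)\Phi(t').
\end{equation*}
Since $\frac{1}{1-rt}=\frac1r\,\frac{d\log t}{dx}$, the operator $e^{\frac ur\partial_{x'}}$ acts on an exact derivative in~$x'$. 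We would then integrate by parts termwise: $\res_{1/r}(\partial_x^m g)\,\Phi=(-1)^m\res_{1/r} g\,\partial_x^m(\Phi/dx)\,dx$. This is legitimate because all the differentials involved have zero residues at the branch point. The result is
\begin{equation*}
\sum_{m\ge0}\fc_m\frac{u^m}{m!}=-\res_{t'=1/r}\, d\log t'\;\Bigl(e^{-\frac{u}{r}\partial_{x'}}\frac{\Phi}{dx'}\Bigr).
\end{equation*}
In other words, the shift has been transferred to $\Phi/dx$, which is regular at $t'=1/r$. Each $u^m$-coefficient is then an honest meromorphic differential in $t'$.

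The second step is to evaluate each coefficient residue by a contour deformation on the $t'$-line. We move the contour off $t'=1/r$ onto the other singularities of the integrand, namely $t'=0$, $t'=\infty$ and the logarithmic point $t'=-1$, with the branch cut of $\log(1+t')$ and of $\log t'$ taken into account. We then treat the two summands of $\Phi$ separately.
\begin{itemize}
\item For the summand $(y-t)\,dx$, the function $\Phi/dx=\log(1+t')-t'$ enters, and $d\log t'$ has simple poles at $0$ and $\infty$. The contributions from $0$ and $\infty$ should produce the linear term $u/r$ in~\eqref{eqB:Wu}, together with the $\hbar$-independent part $\log(-t)$ written in the coordinate $u=r(x(t)-x(-1))$.
\item For the second summand, the residue at $\tilde t=-1$ of $\bigl(\frac1{\cS(\hbar\partial_x)}-1\bigr)y\,dx$ against the Cauchy kernel, we exchange the two residues. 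This should give exactly the correction $\bigl(\frac1{\cS(\hbar\partial_x)}-1\bigr)\log(-t)$. Here we use that near $t=-1$ one has $\log(1+t)=\log(-t)\cdot(\text{unit})$ up to regular terms, that $u$ is a local coordinate there with $\partial_u=\frac1r\partial_x$, and that the shift $e^{-\frac ur\partial_x}$ recentres the expansion from $x(-1)$ to $x(t)$.
\end{itemize}
Adding the two contributions and comparing with $W=\frac ur-\sum\fc_m\frac{u^m}{m!}$ should yield~\eqref{eqB:Wt}.

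As a cross-check, and possibly as a cleaner route, we would verify the identity after applying $\partial_u=\frac1r\partial_x$ to both sides. On the left this gives $\frac1r-\sum_m\fc_{m+1}\frac{u^m}{m!}$. On the right it gives $\frac1r\partial_x\frac1{\cS(\hbar\partial_x)}\log(-t)$, which is single valued, so the multivaluedness is removed. The constant of integration would then be fixed at $u=0$ (i.e.\ $t=-1$), by comparing $-\fc_0$ with the regularised value of the right-hand side. This comparison reduces to the principal part~\eqref{eq:PrincipalPartsLog}.

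We expect the main obstacle to be the contour-deformation step. One has to make the formal shift $e^{\mp\frac ur\partial_x}$ compatible with evaluating residues at points other than $1/r$, where $x$ is singular ($t'=0,\infty$) or $y$ is logarithmic ($t'=-1$). One also has to track the branch cuts, so that only the prescribed principal parts survive and the $\frac1{\cS}$ operator emerges with the correct normalisation. Working coefficientwise in $u$ and $\hbar^2$, and using the derivative form above, should keep every differential meromorphic and make this bookkeeping manageable.
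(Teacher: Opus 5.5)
Your skeleton (write $\fc_m$ as a residue at the branch point $t=1/r$, integrate by parts in $x$, then move the residue to $t=-1$ and $\infty$) is the right one and is essentially the paper's. As written, however, the proposal misplaces where the contributions come from. It also leaves unproved the one step that produces $\frac{1}{\cS}$ acting on $\log(-t)$, and the justification you offer for that step is false.

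\begin{itemize}
\item $\log(-t)$ is holomorphic at $t=-1$, with a simple zero there; it is $y=\log(1+t)$ that is singular. So there is no relation $\log(1+t)=\log(-t)\cdot(\text{unit})$, and nothing needs to be ``regularised'' at $u=0$.
\item $\Phi/dx$ is not regular at $t=1/r$. The principal-part summand of $\Phi$ is holomorphic and nonvanishing at $1/r$; at order $\hbar^2$ it equals $-\frac{\hbar^2}{24}\frac{r\,dt}{(r+1)(t+1)^2}$. Since $dx$ vanishes at $1/r$, the quotient has a pole. If $\Phi/dx$ were regular you would get $\fc_0=0$, whereas in fact $\fc_0=-\frac{r^3}{24(r+1)^3}\hbar^2+O(\hbar^4)$.
\item For the summand $(y-t)\,dx$ no branch cuts arise. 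The $m=0$ term is already regular at $1/r$. For $m\ge1$ the integrand $\frac{dt}{t}\,\partial_x^m(y-t)$ is rational, with no pole at $t=0$. The point $\infty$ gives only $\delta_{m,1}/r$, which cancels the $u/r$ in $W$. The $\hbar$-independent part $-r^{-m}\partial_x^m\log(-t)|_{t=-1}$ comes from $t=-1$, through the simple pole of $dy=\frac{dt}{1+t}$, not from $0$ and $\infty$.
\end{itemize}

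What is missing is the mechanism for the $\frac{1}{\cS}$-correction. After exchanging residues, the second summand contributes $r\res_{t=-1}\xi_m\bigl((\tfrac{1}{\cS(\hbar\partial_x)}-1)y\bigr)dx$. Write $\xi_m=r^{-m-1}\partial_x^{m+1}\log(-t)$ and integrate by parts to transfer the even operator $\frac{1}{\cS}-1$ onto $\log(-t)$. This is legitimate because the operator involves only $\partial_x^{2j}$ with $j\ge1$, and each $\partial_x^{2j}y$ is meromorphic at $-1$. Continue until only $\partial_xy\,dx=dy=\frac{dt}{1+t}$ remains. The residue then becomes an evaluation and gives $-r^{-m}\bigl((\tfrac{1}{\cS}-1)\partial_x^m\log(-t)\bigr)|_{t=-1}$. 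Combined with the first summand this yields $\fc_m=\frac{\delta_{m,1}}{r}-\frac{r^{-m}\partial_x^m}{\cS(\hbar\partial_x)}\log(-t)|_{t=-1}$, which is exactly the Taylor expansion of the claim at $u=0$.

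The paper organises this more economically. It replaces $\Phi$ by the global $\Phi_0=(-t+\frac{1}{\cS}y)\,dx$; the product $\xi_k(\Phi_0-\Phi)$ has its only pole at $1/r$, so it contributes nothing. It then integrates by parts at $t=1/r$, where $y$ is holomorphic, to reach $\res_{t=1/r}\bigl(\frac{r^{-k}\partial_x^k}{\cS}\log(-t)\bigr)\frac{dt}{t+1}$. Finally it moves this single rational residue to $t=-1$. No residue exchange and no separate treatment of the two summands are needed.
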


Notice that the function $\log(-t)$ is well defined and regular at the point $t=-1$. Using that
$
\partial_x\log(-t)=\frac{r}{1-rt}=r\,\xi_0,%
$
we obtain:
\begin{equation}
\begin{aligned}
W&=\log(-t)-\frac{r^2}{24}\xi_1\hbar^2+\frac{7\,r^4}{5760}\xi_3\hbar^4+\dots
\\&=\log(-t)-\frac{r^2}{24}\frac{r t}{(1-r t)^3}\hbar^2+\frac{7\,r^4}{5760}\frac{r t \left(1+8 r t+6 r^2 t^2\right)}{(1-r t)^7}\hbar^4+\dots
\end{aligned}
\end{equation}

\begin{proof}
For the local coordinate $u$, we have
\begin{equation}
\partial_u=r^{-1}\partial_x=\frac{t}{1-r t}\partial_t.
\end{equation}
Therefore, the Taylor coefficients of the function~\eqref{eqB:Wt} at the point $t=-1$ are given by
\begin{equation}
\partial_u^kW(u)\bigm|_{u=0}=\partial_{u(t)}^kW(u(t))\bigm|_{t=-1}=
\frac{r^{-k}\partial_{x}^k}{\cS(\hbar\partial_{x})}\log(-t)\bigm|_{t=-1}.
\end{equation}
Comparing with the Taylor expansion of~\eqref{eqB:Wu} we see that Proposition is equivalent to the following equality
\begin{equation}\label{eqB:hk2}
\fc_k=\frac{\delta_{k,1}}{r}-\frac{r^{-k}\partial_{x}^k}{\cS(\hbar\partial_{x})}\log(-t)\biggm|_{t=-1}.
\end{equation}
Let us prove it by evaluating explicitly the coefficients $\fc_k$ from their definition. We do it in two steps. In the first step, we introduce the differential
\begin{equation}
\Phi_0(t)=(y-t)\,dx+\left(\Bigl(\frac{1}{\cS(\hbar\partial_{x})}-1\Bigr)y\right)dx=\Bigl(-t+\frac{1}{\cS(\hbar\partial_x)}y\Bigr)dx.
\end{equation}
Compared with $\Phi$ given by \eqref{eq:Phi}, here we do not extract the principal part of the second summand. We claim, however, that we have
\begin{equation}
\label{eqB:hk1}
\fc_k=d\xi_k*\Phi_0=-r\res_{t=1/r}\xi_k(t)\,\Phi_0(t),
\end{equation}
Indeed, by construction, the differentials $\Phi_0$ and $\Phi$ have the same principal parts of the pole at $t=-1$ but $\Phi_0$ has an extra pole at $t=1/r$ produced by iterative applications of $\partial_x$. It follows that their difference $\Phi_0-\Phi$ has a unique pole at $t=1/r$. The product $\xi_k\,(\Phi_0-\Phi)$ also has a unique pole at $t=1/r$. Since this is a unique pole of a global meromorphic differential, its residue is necessarily vanishing. Hence, the residues~\eqref{eqB:hk0} and~\eqref{eqB:hk1} coincide.

Next, we apply repeatedly the equality $\res(\partial_xf\;g\;dx)=-\res(f\;\partial_xg\;dx)$ to the obtained expression for $\fc_k$. In the case $k\ge2$, ignoring the terms that do not contribute for $k\ge2$ and using that $\frac{1}{1-rt}=r^{-1}\partial_x\log(-t)$, we compute:
\begin{equation}
\begin{aligned}
\fc_k&=-r \res_{t=1/r}\xi_k\;
\left(\frac{1}{\cS(\hbar\partial_{x})}y\right)\;dx
=-\res_{t=1/r}r^{-k}\partial_x^{k}\partial_x\log(-t)\;
\left(\frac{1}{\cS(\hbar\partial_{x})}y\right)\;dx
\\&
=\res_{t=1/r}r^{-k}\partial_x^{k}\log(-t)\;
\left(\frac{1}{\cS(\hbar\partial_{x})}\partial_xy\right)\;dx
=\res_{t=1/r}
\left(\frac{r^{-k}\partial_x^{k}}{\cS(\hbar\partial_{x})}\log(-t)\right)\;\partial_xy\,dx
\\&
=\res_{t=1/r}
\left(\frac{r^{-k}\partial_x^{k}}{\cS(\hbar\partial_{x})}\log(-t)\right)\;\frac{dt}{t+1}
=- \res_{t=-1}
\left(\frac{r^{-k}\partial_x^{k}}{\cS(\hbar\partial_{x})}\log(-t)\right)\;\frac{dt}{t+1}.
\end{aligned}
\end{equation}
This gives exactly~\eqref{eqB:hk2}. In the cases $k=0$ and $k=1$ the computations are similar, appropriately accounting for exceptional terms. This proves~\eqref{eqB:hk2}, and hence, Proposition.
\end{proof}

Next, we consider the implicit equation on the value $u^*$. With the introduced notations it reads
\begin{equation}\label{eqB:impliciteq}
W(u^*,\hbar)=0.
\end{equation}
A solution to this equation is a point $t^*=-1+O(\hbar^2)$, i.e. $u^*=O(\hbar^2)$, on the spectral curve in a vicinity of $t=-1$.
This equation is nondegenerate since $W(u,0)=\log(-t)$ has a nonzero derivative at the considered point. Therefore, its solution is uniquely defined as a series in~$\hbar$. For the local coordinate~$t$ it is given by
\begin{equation}
t^*=t(u^*)=-1+\frac{r^3 }{24\,(r+1)^3}\hbar ^2-\frac{r^5 \left(27 r^2-41 r+7\right) }{5760\,(r+1)^7}\hbar^4
+\dots
\end{equation}

There is a convenient way to find the solution explicitly using Lagrange inversion.

\begin{proposition}[Lagrange--B\"urmann inversion formula~\cite{Lagrange}]
\label{prop:LB}
For any power series $H(u)$ we have
\begin{equation}\label{eqB:L-B}
\begin{aligned}
H(u^*)&=H(0)-\res_{u=0}\log\bigl(W(u)/u\bigr)\,\partial_u H(u)\,du
\\&=\res_{u=0}H(u)\;\partial_u\!\log\bigl(W(u)\bigr)\,du.
\end{aligned}
\end{equation}
\end{proposition}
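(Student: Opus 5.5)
The plan is to treat everything as formal power series in $\hbar^2$ whose coefficients are Laurent series in $u$, and to reduce both identities in~\eqref{eqB:L-B} to an explicit factorization of $W$ through its root $u^*$. The residue $\res_{u=0}$ means: expand in $\hbar$ first, then take the coefficient of $u^{-1}du$ in each $\hbar$-coefficient. The two inputs are $W(0,\hbar)=O(\hbar^2)$, which is $-\fc_0$ with $\fc_0|_{\hbar=0}=0$, and $\partial_uW(0,0)\ne0$, which is the nondegeneracy noted above. Together these give a unique solution $u^*=O(\hbar^2)$ of~\eqref{eqB:impliciteq}.

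\textbf{Step 1: factor out the root.} Since $W(u^*)=0$, Taylor expansion of $W$ around $u^*$ gives
\[
W(u)=(u-u^*)K(u),\qquad K(u)=\sum_{m\ge1}\frac{\partial_u^mW(u^*)}{m!}(u-u^*)^{m-1}.
\]
This is a well-defined power series in $u$ and $\hbar^2$ because $u^*=O(\hbar^2)$, so re-expanding $(u-u^*)^{m-1}$ in $u$ converges $\hbar$-adically. Its constant term $K(0)=\partial_uW(0,0)+O(\hbar^2)$ is invertible. Hence $\log K(u)$ is a genuine power series in $u$ and $\hbar^2$, and $\partial_u\log K$ has no negative powers of $u$.

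\textbf{Step 2: the second formula.} Write $\partial_u\log W=\frac{1}{u-u^*}+\partial_u\log K$. Expanding $\frac{1}{u-u^*}=\sum_{m\ge0}{u^*}^m u^{-m-1}$ is legitimate in the formal $\hbar$-adic sense, again because $u^*=O(\hbar^2)$. For $H(u)=\sum H_ku^k$ this gives
\[
\res_{u=0}\frac{H(u)\,du}{u-u^*}=\sum_m H_m{u^*}^m=H(u^*),
\]
while $\res_{u=0}H\,\partial_u\log K\,du=0$ because the integrand is regular at $u=0$.

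\textbf{Step 3: the first formula.} Write $W/u=K(u)\bigl(1-u^*/u\bigr)$, so that
\[
\log(W/u)=\log K+\log\bigl(1-u^*/u\bigr)
\]
is a well-defined element of our ring. In the ring of Laurent series the residue of any total derivative vanishes, so integration by parts gives $-\res\log(W/u)\,H'\,du=\res H\,\partial_u\log(W/u)\,du$. Adding $H(0)=\res_{u=0}H\,du/u$ and using $\partial_u\log(W/u)+1/u=\partial_u\log W$ recovers the second line of~\eqref{eqB:L-B}.

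The only delicate point is Step 1 together with the bookkeeping needed to justify the formal manipulations: we must check that every expansion converges coefficientwise in $\hbar$ and that the residue commutes with the $\hbar$-expansion. Both follow from $u^*=O(\hbar^2)$, since for each fixed power of $\hbar$ only finitely many terms contribute.
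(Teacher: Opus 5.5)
Your proof is correct, but it takes a different route from the paper's.

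\textbf{The paper's route.} The paper does not start from the root $u^*$. It sets $G(u)=W(u)-W(0)$ and uses $\epsilon=G(u)$ as a new local coordinate, which is possible since $G'(0)\neq0$. By invariance of residues under this change of variables it computes $[\epsilon^k]H(u(\epsilon))=\frac1k\res_{u=0}G(u)^{-k}\,\partial_uH(u)\,du$. It resums these into $H(u(\epsilon))=-\res_{u=0}\log\bigl((G(u)-\epsilon)/u\bigr)\,\partial_uH(u)\,du$ for $H(0)=0$, and only then substitutes $\epsilon=-W(0)$. The case $H(0)\neq0$ and the second line of~\eqref{eqB:L-B} are obtained at the end by rewriting.

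\textbf{Your route.} You take existence and uniqueness of $u^*=O(\hbar^2)$ as given; the paper also asserts this just before the proposition. You factor $W=(u-u^*)K$ with $K$ a unit and read off $H(u^*)$ from the simple pole $1/(u-u^*)=\sum_m {u^*}^m u^{-m-1}$, which is essentially the argument principle. You prove the second line first and deduce the first by integration by parts.

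\textbf{Comparison.} Your argument is shorter, treats $H(0)\neq0$ uniformly, and never computes the compositional inverse. The paper's argument is constructive: it produces $u^*=u(-W(0))$, and hence the solution itself, along the way. It also gives the full $\epsilon$-dependent inversion formula, in the change-of-coordinates spirit used throughout that appendix.

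One point you use implicitly should be stated. Your factorized $1/W$ coincides with the paper's convention of expanding in $\hbar$ first. This holds because $W|_{\hbar=0}\neq0$ makes $W$ a unit in the integral domain $\mathbb{C}((u))[[\hbar]]$, where inverses are unique.
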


\begin{remark}
Here and below, we drop the dependence of~$W$ on~$\hbar$ from notations, for brevity.
We consider the differentials under the residues as formal power series in~$\hbar$. The coefficient of each monomial in~$\hbar$ has a pole in~$u$ of finite order and its residue is well defined.
\end{remark}

\begin{proof}%
	Essentially, the Lagrange--Bürmann formula is a corollary of invariance of residue of a meromorphic differential under a change of variables. Let us rewrite the implicit equation in the form $G(u)=-W(0)$ where $G(u)=W(u)-W(0)$. Since $G'(0)\neq 0$, we can regard $\epsilon=G(u)$ as a local change of variables. With $u(\epsilon)$ being the inverse change, the solution to the original equation is obtained by the substitution $u^*=u(-W(0))$. This substitution is well defined since $W(0)=O(\hbar)$ as a series in~$\hbar$.
	Thus, we need to compute the inverse function~$u(\epsilon)$ and also the composition $H(u(\epsilon))$ in the expansion in~$\epsilon$. We have:
	\begin{multline}
		[\epsilon^k]H(u(\epsilon))=\frac1{k}[\epsilon^{k-1}]\partial_\epsilon H(u(\epsilon))=
		\frac{1}{k}\res_{\epsilon=0}\frac{\partial_\epsilon H(u(\epsilon))d\epsilon}{\epsilon^k}
		\\=\frac{1}{k}\res_{u=0}\Bigl(\frac{1}{G(u)}\Bigr)^k \partial_u H(u)du
		=-[\epsilon^k]\res_{u=0}\log\Bigl(1-\frac{\epsilon}{G(u)}\Bigr) \partial_u H(u)du.
	\end{multline}
	This leads to the equality
	\begin{equation}
		H(u(\epsilon))=-\res_{u=0}\log\Bigl(\frac{G(u)-\epsilon}{G(u)}\Bigr) \partial_u H(u)du
		=-\res_{u=0}\log\Bigl(\frac{G(u)-\epsilon}{u}\Bigr) \partial_u H(u)du.
	\end{equation}
	The last equality holds since $\log(G(u)/u)$ is regular and does not contribute to the residue. This computation is applicable if $H(u)$ has no constant term, that is, if $H(0)=0$. Substituting $\epsilon=-W(0)$ we obtain the first equality~\eqref{eqB:L-B} in the case $H(0)=0$. In the case of an arbitrary value of $H(0)$ its contribution on the right hand side of~\eqref{eqB:L-B} should be added manually or one can rewrite the obtained equality as in the second line of~\eqref{eqB:L-B}; in this form it holds with no restriction on~$H(0)$.
\end{proof}

This proposition allows one to complete the computation of the constants $\fc^*_k$, $s^*_k$ since they are specializations at $u=u^*$ of certain functions in~$u$. Indeed, by definition, see~\eqref{eq:hkstarorigdef}--\eqref{eq:skorigdef}, we have
\begin{align}
\fc_k^*&=\sum_{m=0}^\infty \fc_{k+m}\frac{{u^*}^m}{m!}=\frac{\delta_{k,1}}{r}-\partial^k_uW(u)\bigm|_{u=u^*},\quad k\ge1,
\\e^{-\sum_{k-1}^\infty s^*_kz^k}&=1-\frac{1}{\frac1r-\fc_1^*}\sum_{i=1}^\infty \fc^*_{i+1}z^i
=\frac{\frac1r-\sum_{i=0}^\infty \fc^*_{i+1}z^i}{\frac1r-\fc_1^*}
\\\notag&=\frac{\sum_{i=0}^\infty z^i\partial_u^{i+1}W(u)}{\partial_uW(u)}\Bigm|_{u=u^*}
=\frac{\frac{\partial_u}{1-z\,\partial_u}W(u)}{\partial_uW(u)}\Bigm|_{u=u^*}.
\end{align}

\begin{corollary}\label{corB:res-formulas}
We have
\begin{align}\label{eqB:LB1}
u^*&=\res_{u=0}u\;\partial_u\!\log\bigl(W(u)\bigr)\,du
=-\res_{u=0}\log\bigl(W(u)/u\bigr)\;du,
\\\frac{1}{r}-\fc_1^*&=\res_{u=0}\partial_uW(u)\;\partial_u\!\log\bigl(W(u)\bigr)\,du,
\\\fc_k^*&=-\res_{u=0}\partial^k_uW(u)\;\partial_u\!\log\bigl(W(u)\bigr)\,du,\quad k\ge2,
\\\label{eqB:LB4}
\sum_{k=1}^\infty s^*_k z^k&=\res_{u=0}
\log\Bigl(\frac{\partial_uW(u)}{\frac{\partial_u}{1-z\,\partial_u}W(u)}\Bigr)\;
  \partial_u\!\log\bigl(W(u)\bigr)\,du.
\end{align}
\end{corollary}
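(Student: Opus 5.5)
The plan is to obtain each formula of Corollary~\ref{corB:res-formulas} by specializing Proposition~\ref{prop:LB} to a suitable $H(u)$. We then rewrite the resulting expressions using the identities for $\fc_k^*$ and $s_k^*$ displayed just before the corollary, namely
\[
\fc_k^*=\frac{\delta_{k,1}}{r}-\partial_u^kW(u)\bigm|_{u=u^*},
\qquad
e^{-\sum_k s^*_kz^k}=\frac{\frac{\partial_u}{1-z\partial_u}W(u)}{\partial_uW(u)}\Bigm|_{u=u^*}.
\]
The implicit equation~\eqref{eq:ustarorigdef} with $C=1/r$ is exactly $W(u^*)=0$, that is~\eqref{eqB:impliciteq}. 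Hence Proposition~\ref{prop:LB} applies with this $W$, as long as $W(0)=O(\hbar^2)$ and $\partial_uW(0)\neq0$ at $\hbar=0$. Both conditions were already observed after~\eqref{eqB:impliciteq}: we have $W(u,0)=\log(-t)$, and its $u$-derivative at $t=-1$ is nonzero.

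For~\eqref{eqB:LB1} I will take $H(u)=u$. The second form of~\eqref{eqB:L-B} gives the first expression immediately. The first form gives $u^*=0-\res\log(W/u)\cdot 1\,du$, which is the second expression.

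For $\frac1r-\fc_1^*$ I will take $H=\partial_uW$. The displayed identity with $k=1$ gives $\frac1r-\fc_1^*=\partial_uW(u^*)$, and the second form of~\eqref{eqB:L-B} then yields the claim.

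For $k\ge2$ I will take $H=\partial_u^kW$. The Kronecker delta vanishes, so $\fc_k^*=-\partial_u^kW(u^*)$, and the formula follows with the minus sign.

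For~\eqref{eqB:LB4} I will take
\[
H(u)=\log\Bigl(\partial_uW(u)\Big/\tfrac{\partial_u}{1-z\partial_u}W(u)\Bigr),
\]
which is a power series in $z$ whose coefficients are power series in $u$ and $\hbar$. The $z^0$ term is $\log 1=0$, and the log is well defined because $\partial_uW$ is invertible near $u=0$. By the displayed identity for $s^*_k$, evaluating $H$ at $u^*$ gives $-\log e^{-\sum s_k^*z^k}=\sum_k s^*_kz^k$. Applying the second form of~\eqref{eqB:L-B} coefficientwise in $z$ then finishes the argument.

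The only delicate point is making sure the residues are legitimate formal objects. $W(u)$ has a zero at $u^*=O(\hbar^2)$ rather than at $0$, so $\partial_u\log W$ has to be expanded in $\hbar$ first. At each order in $\hbar$ this produces a pole of finite order at $u=0$; this is the convention of the Remark following Proposition~\ref{prop:LB}, and the proof of that proposition already handles it. I expect no obstacle beyond checking that each $H$ is a genuine power series in $u$ (with coefficients series in $\hbar$ and $z$), which is clear from the invertibility of $\partial_uW$ at $u=0$.
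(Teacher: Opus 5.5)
Your proposal is correct and follows the paper's own route: the corollary comes from applying the Lagrange--B\"urmann formula of Proposition~\ref{prop:LB} to $W(u)=u/r-\sum_m \fc_m u^m/m!$. You take $H(u)$ equal to $u$, $\partial_uW$, $\partial_u^kW$ and $\log\bigl(\partial_uW/\tfrac{\partial_u}{1-z\partial_u}W\bigr)$ in turn, after rewriting $\fc_k^*$ and $e^{-\sum_k s_k^*z^k}$ as evaluations at $u=u^*$ exactly as displayed before the corollary. Your remarks on nondegeneracy ($W(0)=O(\hbar^2)$, $\partial_uW(0)|_{\hbar=0}\neq0$) and on applying the formula coefficientwise in $z$ make explicit the justifications the paper uses implicitly.
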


The key advantage of these relations is that they can be computed for any choice of the local coordinate on the spectral curve; the residues are independent of a choice of the local coordinate. Eqs.~\eqref{eqB:uw}--\eqref{eqB:skw} of Proposition~\ref{prop:elsvformulazr} reproduce these relations in the local coordinate $w=t+1$, for clarity.

\printbibliography

@article {ABDKS-rationalspectralKP,
	AUTHOR = {Alexandrov, A. and Bychkov, B. and Dunin-Barkowski, P. and
	Kazarian, M. and Shadrin, S.},
	TITLE = {Any {T}opological {R}ecursion on a {R}ational {S}pectral
	{C}urve is {KP} {I}ntegrable},
	JOURNAL = {Comm. Math. Phys.},
	FJOURNAL = {Communications in Mathematical Physics},
	VOLUME = {407},
	YEAR = {2026},
	NUMBER = {4},
	PAGES = {Paper No. 69},
	ISSN = {0010-3616,1432-0916},
	MRCLASS = {99-06},
	MRNUMBER = {5041856},
	DOI = {10.1007/s00220-026-05566-9},
	URL = {https://doi.org/10.1007/s00220-026-05566-9},
}

@article {ABDKS-degenerate-irregular,
	AUTHOR = {Alexandrov, A. and Bychkov, B. and Dunin-Barkowski, P. and
	Kazarian, M. and Shadrin, S.},
	TITLE = {Degenerate and irregular topological recursion},
	JOURNAL = {Comm. Math. Phys.},
	FJOURNAL = {Communications in Mathematical Physics},
	VOLUME = {406},
	YEAR = {2025},
	NUMBER = {5},
	PAGES = {Paper No. 94, 31},
	ISSN = {0010-3616,1432-0916},
	MRCLASS = {14H81 (05E14 14H70 81T45)},
	MRNUMBER = {4887613},
	MRREVIEWER = {Reinier\ Kramer},
	DOI = {10.1007/s00220-025-05274-w},
	URL = {https://doi.org/10.1007/s00220-025-05274-w},
}

@article {ABDKS4,
	AUTHOR = {Alexandrov, A. and Bychkov, B. and Dunin-Barkowski, P. and
	Kazarian, M. and Shadrin, S.},
	TITLE = {Log topological recursion through the prism of {$x$}-{$y$}
	swap},
	JOURNAL = {Int. Math. Res. Not. IMRN},
	FJOURNAL = {International Mathematics Research Notices. IMRN},
	YEAR = {2024},
	NUMBER = {21},
	PAGES = {13461--13487},
	ISSN = {1073-7928,1687-0247},
	MRCLASS = {14H81},
	MRNUMBER = {4819863},
	MRREVIEWER = {Vida\ Milani},
	DOI = {10.1093/imrn/rnae213},
	URL = {https://doi.org/10.1093/imrn/rnae213},
}

@article {ABDKS-sympl,
	AUTHOR = {Alexandrov, Alexander and Bychkov, Boris and Dunin-Barkowski,
	Petr and Kazarian, Maxim and Shadrin, Sergey},
	TITLE = {Symplectic duality via log topological recursion},
	JOURNAL = {Commun. Number Theory Phys.},
	FJOURNAL = {Communications in Number Theory and Physics},
	VOLUME = {18},
	YEAR = {2024},
	NUMBER = {4},
	PAGES = {795--841},
	ISSN = {1931-4523,1931-4531},
	MRCLASS = {14H81 (05E14 14H30 14N10 37K10)},
	MRNUMBER = {4836057},
	MRREVIEWER = {Ali\ Shojaei-Fard},
	DOI = {10.4310/cntp.241203001416},
	URL = {https://doi.org/10.4310/cntp.241203001416},
}

@article {ACEH,
	AUTHOR = {Alexandrov, A. and Chapuy, G. and Eynard, B. and Harnad, J.},
	TITLE = {Weighted {H}urwitz numbers and topological recursion},
	JOURNAL = {Comm. Math. Phys.},
	FJOURNAL = {Communications in Mathematical Physics},
	VOLUME = {375},
	YEAR = {2020},
	NUMBER = {1},
	PAGES = {237--305},
	ISSN = {0010-3616},
	MRCLASS = {14H81 (14H30 33C20 33E30 37K10)},
	MRNUMBER = {4082183},
	MRREVIEWER = {Reinier Kramer},
	DOI = {10.1007/s00220-020-03717-0},
	URL = {https://doi.org/10.1007/s00220-020-03717-0},
}

@article{als,
	AUTHOR = {Alexandrov, A. and Lewanski, D. and Shadrin, S.},
	TITLE = {Ramifications of {H}urwitz theory, {KP} integrability and
	quantum curves},
	JOURNAL = {J. High Energy Phys.},
	FJOURNAL = {Journal of High Energy Physics},
	YEAR = {2016},
	NUMBER = {5},
	PAGES = {124, front matter+30},
	ISSN = {1126-6708},
	MRCLASS = {81P10},
	MRNUMBER = {3521843},
	DOI = {10.1007/JHEP05(2016)124},
	URL = {https://doi.org/10.1007/JHEP05(2016)124},
}

@article {banerjee2026gwdtinvariants5dbps,
    AUTHOR = {Banerjee, Sibasish and Hock, Alexander and Marchal, Olivier},
     TITLE = {G{W}/{DT} invariants and 5{D} {BPS} indices for strips from
              topological recursion},
   JOURNAL = {Lett. Math. Phys.},
  FJOURNAL = {Letters in Mathematical Physics},
    VOLUME = {116},
      YEAR = {2026},
    NUMBER = {1},
     PAGES = {Paper No. 14, 25},
      ISSN = {0377-9017,1573-0530},
   MRCLASS = {14N35 (05A15 14H81 30F30 81T30)},
  MRNUMBER = {5022421},
       DOI = {10.1007/s11005-026-02046-y},
       URL = {https://doi-org.libproxy.ibs.re.kr/10.1007/s11005-026-02046-y},
}

@article{banerjee2025quantumcurvestripgeometries,
   AUTHOR = {Banerjee, Sibasish and Hock, Alexander},
TITLE = {Quantum curve for strip geometries, topological recursion and
	open {GW}/{DT} invariants},
JOURNAL = {Lett. Math. Phys.},
FJOURNAL = {Letters in Mathematical Physics},
VOLUME = {116},
YEAR = {2026},
NUMBER = {1},
PAGES = {Paper No. 22},
ISSN = {0377-9017,1573-0530},
MRCLASS = {14H81 (05A15 30F30 81T30)},
MRNUMBER = {5037182},
DOI = {10.1007/s11005-026-02059-7},
URL = {https://doi.org/10.1007/s11005-026-02059-7},
}

@article {BDKLM,
    AUTHOR = {Borot, Ga\"{e}tan and Do, Norman and Karev, Maksim and
              Lewa\'{n}ski, Danilo and Moskovsky, Ellena},
     TITLE = {Double {H}urwitz numbers: polynomiality, topological recursion
              and intersection theory},
   JOURNAL = {Math. Ann.},
  FJOURNAL = {Mathematische Annalen},
    VOLUME = {387},
      YEAR = {2023},
    NUMBER = {1-2},
     PAGES = {179--243},
      ISSN = {0025-5831,1432-1807},
   MRCLASS = {14H30 (05A15 14N10 51P05)},
  MRNUMBER = {4631045},
       DOI = {10.1007/s00208-022-02457-x},
       URL = {https://doi.org/10.1007/s00208-022-02457-x},
}

@article {BorotKarevLewanski,
	AUTHOR = {Borot, Ga\"etan and Karev, Maksim and Lewa\'nski, Danilo},
	TITLE = {On {ELSV}-type formulae and relations between {$\Omega
	$}-integrals via deformations of spectral curves},
	JOURNAL = {J. Geom. Phys.},
	FJOURNAL = {Journal of Geometry and Physics},
	VOLUME = {207},
	YEAR = {2025},
	PAGES = {Paper No. 105343, 30},
	ISSN = {0393-0440,1879-1662},
	MRCLASS = {14H81 (05A15 14H10 14H60 14H70 14N10 81R10)},
	MRNUMBER = {4816550},
	MRREVIEWER = {Olivier\ Marchal},
	DOI = {10.1016/j.geomphys.2024.105343},
	URL = {https://doi.org/10.1016/j.geomphys.2024.105343},
}

@article {BS-blobbed,
	AUTHOR = {Borot, Ga\"{e}tan and Shadrin, Sergey},
	TITLE = {Blobbed topological recursion: properties and applications},
	JOURNAL = {Math. Proc. Cambridge Philos. Soc.},
	FJOURNAL = {Mathematical Proceedings of the Cambridge Philosophical
	Society},
	VOLUME = {162},
	YEAR = {2017},
	NUMBER = {1},
	PAGES = {39--87},
	ISSN = {0305-0041},
	MRCLASS = {14H81 (32G99 81T30)},
	MRNUMBER = {3581899},
	MRREVIEWER = {Xiaobin Li},
	DOI = {10.1017/S0305004116000323},
	URL = {https://doi.org/10.1017/S0305004116000323},
}

@Article{BDKS1,
	Author = {Bychkov, Boris and Dunin-Barkowski, Petr and Kazarian, Maxim and Shadrin, Sergey},
	Title = {Explicit closed algebraic formulas for {Orlov}-{Scherbin} {{\(n\)}}-point functions},
	FJournal = {Journal de l'{\'E}cole Polytechnique -- Math{\'e}matiques},
	Journal = {J. {\'E}c. Polytech., Math.},
	ISSN = {2429-7100},
	Volume = {9},
	Pages = {1121--1158},
	Year = {2022},
	DOI = {10.5802/jep.202},
	URL = {10.5802/jep.202},
	zbMATH = {7559604},
	Zbl = {1504.37080}
}

@article{bychkov2021topological,
    AUTHOR = {Bychkov, Boris and Dunin-Barkowski, Petr and Kazarian, Maxim
and Shadrin, Sergey},
TITLE = {Topological recursion for {K}adomtsev-{P}etviashvili tau
functions of hypergeometric type},
JOURNAL = {J. Lond. Math. Soc. (2)},
FJOURNAL = {Journal of the London Mathematical Society. Second Series},
VOLUME = {109},
YEAR = {2024},
NUMBER = {6},
PAGES = {Paper No. e12946, 57},
ISSN = {0024-6107,1469-7750},
MRCLASS = {14H81 (14H10 37K10 53D45 81T45)},
MRNUMBER = {4760443},
MRREVIEWER = {Nathan\ Grieve},
DOI = {10.1112/jlms.12946},
URL = {https://doi.org/10.1112/jlms.12946},
}

@article {guay2017generating,
    AUTHOR = {Guay-Paquet, Mathieu and Harnad, J.},
     TITLE = {Generating functions for weighted {H}urwitz numbers},
   JOURNAL = {J. Math. Phys.},
  FJOURNAL = {Journal of Mathematical Physics},
    VOLUME = {58},
      YEAR = {2017},
    NUMBER = {8},
     PAGES = {083503, 28},
      ISSN = {0022-2488,1089-7658},
   MRCLASS = {14H81 (14H30 35Q41 81Q05)},
  MRNUMBER = {3683833},
MRREVIEWER = {Noriko\ Yui},
       DOI = {10.1063/1.4996574},
       URL = {https://doi-org.libproxy.ibs.re.kr/10.1063/1.4996574},
}

@incollection {Harnad,
	AUTHOR = {Harnad, J.},
	TITLE = {Weighted {H}urwitz numbers and hypergeometric
	{$\tau$}-functions: an overview},
	BOOKTITLE = {String-{M}ath 2014},
	SERIES = {Proc. Sympos. Pure Math.},
	VOLUME = {93},
	PAGES = {289--333},
	PUBLISHER = {Amer. Math. Soc., Providence, RI},
	YEAR = {2016},
	ISBN = {978-1-4704-1992-9},
	MRCLASS = {14H30 (05A15 14H81 14N10 20C30)},
	MRNUMBER = {3525997},
	MRREVIEWER = {Francesca\ Vetro},
	DOI = {10.1090/pspum/093/01610},
	URL = {https://doi.org/10.1090/pspum/093/01610},
}

@article {DKPS-qr-ELSV,
	AUTHOR = {Dunin-Barkowski, Petr and Kramer, Reinier and Popolitov,
	Alexandr and Shadrin, Sergey},
	TITLE = {Loop equations and a proof of {Z}vonkine's {$qr$}-{ELSV}
	formula},
	JOURNAL = {Ann. Sci. \'{E}c. Norm. Sup\'{e}r. (4)},
	FJOURNAL = {Annales Scientifiques de l'\'{E}cole Normale Sup\'{e}rieure. Quatri\`eme
	S\'{e}rie},
	VOLUME = {56},
	YEAR = {2023},
	NUMBER = {4},
	PAGES = {1199--1229},
	ISSN = {0012-9593},
	MRCLASS = {14H30 (05E14 14H10 14N10 20C30)},
	MRNUMBER = {4650153},
	DOI = {10.24033/asens.2553},
	URL = {https://doi.org/10.24033/asens.2553},
}

@article {DKOSS-ELSV,
	AUTHOR = {Dunin-Barkowski, P. and Kazarian, M. and Orantin, N. and
	Shadrin, S. and Spitz, L.},
	TITLE = {Polynomiality of {H}urwitz numbers, {B}ouchard-{M}ari\~{n}o
	conjecture, and a new proof of the {ELSV} formula},
	JOURNAL = {Adv. Math.},
	FJOURNAL = {Advances in Mathematics},
	VOLUME = {279},
	YEAR = {2015},
	PAGES = {67--103},
	ISSN = {0001-8708},
	MRCLASS = {14H30 (14H15 57M60)},
	MRNUMBER = {3345179},
	MRREVIEWER = {Milagros Izquierdo},
	DOI = {10.1016/j.aim.2015.03.016},
	URL = {https://doi.org/10.1016/j.aim.2015.03.016},
}

@article {DOSS,
	AUTHOR = {Dunin-Barkowski, P. and Orantin, N. and Shadrin, S. and Spitz,
	L.},
	TITLE = {Identification of the {G}ivental formula with the spectral
	curve topological recursion procedure},
	JOURNAL = {Comm. Math. Phys.},
	FJOURNAL = {Communications in Mathematical Physics},
	VOLUME = {328},
	YEAR = {2014},
	NUMBER = {2},
	PAGES = {669--700},
	ISSN = {0010-3616},
	MRCLASS = {81T45 (14N35 53D45)},
	MRNUMBER = {3199996},
	MRREVIEWER = {Wan Keng Cheong},
	DOI = {10.1007/s00220-014-1887-2},
	URL = {https://doi.org/10.1007/s00220-014-1887-2},
}

@article {Eynard-intersections,
	AUTHOR = {Eynard, B.},
	TITLE = {Invariants of spectral curves and intersection theory of
	moduli spaces of complex curves},
	JOURNAL = {Commun. Number Theory Phys.},
	FJOURNAL = {Communications in Number Theory and Physics},
	VOLUME = {8},
	YEAR = {2014},
	NUMBER = {3},
	PAGES = {541--588},
	ISSN = {1931-4523},
	MRCLASS = {14H10 (11G05 14C17 32G15)},
	MRNUMBER = {3282995},
	MRREVIEWER = {Letterio Gatto},
	DOI = {10.4310/CNTP.2014.v8.n3.a4},
	URL = {https://doi.org/10.4310/CNTP.2014.v8.n3.a4},
}

@article {EO-1st,
	AUTHOR = {Eynard, B. and Orantin, N.},
	TITLE = {Invariants of algebraic curves and topological expansion},
	JOURNAL = {Commun. Number Theory Phys.},
	FJOURNAL = {Communications in Number Theory and Physics},
	VOLUME = {1},
	YEAR = {2007},
	NUMBER = {2},
	PAGES = {347--452},
	ISSN = {1931-4523},
	MRCLASS = {14H15 (14N35 32A27 37K10 37K20 81T45)},
	MRNUMBER = {2346575},
	MRREVIEWER = {Vincent Bouchard},
	DOI = {10.4310/CNTP.2007.v1.n2.a4},
	URL = {https://doi.org/10.4310/CNTP.2007.v1.n2.a4},
}

@article {hock2023xy,
	AUTHOR = {Hock, Alexander},
	TITLE = {{$x - y$} duality in topological recursion for exponential
	variables via quantum dilogarithm},
	JOURNAL = {SciPost Phys.},
	FJOURNAL = {SciPost Physics},
	VOLUME = {17},
	YEAR = {2024},
	NUMBER = {2},
	PAGES = {Paper No. 065, 36},
	ISSN = {2542-4653},
	MRCLASS = {14H81 (05E14 32G34 33B15 81T30)},
	MRNUMBER = {4794300},
	MRREVIEWER = {Ali\ Shojaei-Fard},
	URL = {doi:10.21468/SciPostPhys.17.2.065},
	DOI = {10.21468/SciPostPhys.17.2.065},
}

@article {Hock-SymplecticNonInvariance,
	AUTHOR = {Hock, Alexander},
	TITLE = {Symplectic (non-)invariance of the free energy in topological
	recursion},
	JOURNAL = {Comm. Math. Phys.},
	FJOURNAL = {Communications in Mathematical Physics},
	VOLUME = {406},
	YEAR = {2025},
	NUMBER = {8},
	PAGES = {Paper No. 192, 28},
	ISSN = {0010-3616,1432-0916},
	MRCLASS = {14H81 (30F99)},
	MRNUMBER = {4927821},
	DOI = {10.1007/s00220-025-05373-8},
	URL = {https://doi.org/10.1007/s00220-025-05373-8},
}

@misc{HockSha,
	title={Quantum Curves in the Context of Symplectic Duality}, 
	author={Alexander Hock and Sergey Shadrin},
	year={2025},
	eprint={2504.14924},
	archivePrefix={arXiv},
	primaryClass={math-ph},
	url={https://arxiv.org/abs/2504.14924}, 
}

@article {LPSZ,
	AUTHOR = {Lewanski, Danilo and Popolitov, Alexandr and Shadrin, Sergey
	and Zvonkine, Dimitri},
	TITLE = {Chiodo formulas for the {$r$}-th roots and topological
	recursion},
	JOURNAL = {Lett. Math. Phys.},
	FJOURNAL = {Letters in Mathematical Physics},
	VOLUME = {107},
	YEAR = {2017},
	NUMBER = {5},
	PAGES = {901--919},
	ISSN = {0377-9017},
	MRCLASS = {14H10 (14N10 14N35)},
	MRNUMBER = {3633029},
	MRREVIEWER = {Fabio Perroni},
	DOI = {10.1007/s11005-016-0928-5},
	URL = {https://doi.org/10.1007/s11005-016-0928-5},
}

@article {SSZ,
    AUTHOR = {Shadrin, S. and Spitz, L. and Zvonkine, D.},
     TITLE = {Equivalence of {ELSV} and {B}ouchard-{M}ari\~{n}o conjectures
              for {$r$}-spin {H}urwitz numbers},
   JOURNAL = {Math. Ann.},
  FJOURNAL = {Mathematische Annalen},
    VOLUME = {361},
      YEAR = {2015},
    NUMBER = {3-4},
     PAGES = {611--645},
      ISSN = {0025-5831,1432-1807},
   MRCLASS = {14H10 (53D45)},
  MRNUMBER = {3319543},
MRREVIEWER = {Felix\ Janda},
       DOI = {10.1007/s00208-014-1082-y},
       URL = {https://doi.org/10.1007/s00208-014-1082-y},
}

@article {rELSV-2,
	AUTHOR = {Borot, Ga\"{e}tan and Kramer, Reinier and Lewanski, Danilo and
	Popolitov, Alexandr and Shadrin, Sergey},
	TITLE = {Special cases of the orbifold version of {Z}vonkine's
	{$r$}-{ELSV} formula},
	JOURNAL = {Michigan Math. J.},
	FJOURNAL = {Michigan Mathematical Journal},
	VOLUME = {70},
	YEAR = {2021},
	NUMBER = {2},
	PAGES = {369--402},
	ISSN = {0026-2285},
	MRCLASS = {14N10 (05E10 14N35 30F30)},
	MRNUMBER = {4278700},
	MRREVIEWER = {Hsian-Hua Tseng},
	DOI = {10.1307/mmj/1592877614},
	URL = {https://doi.org/10.1307/mmj/1592877614},
}

@article {rELSV-1,
	AUTHOR = {Kramer, R. and Lewanski, D. and Popolitov, A. and Shadrin, S.},
	TITLE = {Towards an orbifold generalization of {Z}vonkine's
	{$r$}-{ELSV} formula},
	JOURNAL = {Trans. Amer. Math. Soc.},
	FJOURNAL = {Transactions of the American Mathematical Society},
	VOLUME = {372},
	YEAR = {2019},
	NUMBER = {6},
	PAGES = {4447--4469},
	ISSN = {0002-9947},
	MRCLASS = {14H30 (14H10 14H81 14N10 53D45)},
	MRNUMBER = {4009392},
	MRREVIEWER = {Navid Nabijou},
	DOI = {10.1090/tran/7793},
	URL = {https://doi.org/10.1090/tran/7793},
}

@article {JPT-1,
	AUTHOR = {Dunin-Barkowski, P. and Lewanski, D. and Popolitov, A. and
	Shadrin, S.},
	TITLE = {Polynomiality of orbifold {H}urwitz numbers, spectral curve,
	and a new proof of the {J}ohnson-{P}andharipande-{T}seng
	formula},
	JOURNAL = {J. Lond. Math. Soc. (2)},
	FJOURNAL = {Journal of the London Mathematical Society. Second Series},
	VOLUME = {92},
	YEAR = {2015},
	NUMBER = {3},
	PAGES = {547--565},
	ISSN = {0024-6107},
	MRCLASS = {14H10 (14H30 32G99 37K10)},
	MRNUMBER = {3431649},
	MRREVIEWER = {Andrei B. Bogatyr\"{e}v},
	DOI = {10.1112/jlms/jdv047},
	URL = {https://doi.org/10.1112/jlms/jdv047},
}

@article {JPT,
	AUTHOR = {Johnson, P. and Pandharipande, R. and Tseng, H.-H.},
	TITLE = {Abelian {H}urwitz-{H}odge integrals},
	JOURNAL = {Michigan Math. J.},
	FJOURNAL = {Michigan Mathematical Journal},
	VOLUME = {60},
	YEAR = {2011},
	NUMBER = {1},
	PAGES = {171--198},
	ISSN = {0026-2285},
	MRCLASS = {14D23 (14H10 14H30)},
	MRNUMBER = {2785870},
	MRREVIEWER = {Arvid Siqveland},
	DOI = {10.1307/mmj/1301586310},
	URL = {https://doi.org/10.1307/mmj/1301586310},
}

@article {Chiodo,
	AUTHOR = {Chiodo, Alessandro},
	TITLE = {Towards an enumerative geometry of the moduli space of twisted
	curves and {$r$}th roots},
	JOURNAL = {Compos. Math.},
	FJOURNAL = {Compositio Mathematica},
	VOLUME = {144},
	YEAR = {2008},
	NUMBER = {6},
	PAGES = {1461--1496},
	ISSN = {0010-437X},
	MRCLASS = {14H10 (14C17 14H60 14N10 14N35)},
	MRNUMBER = {2474317},
	MRREVIEWER = {Hsian-Hua Tseng},
	DOI = {10.1112/S0010437X08003709},
	URL = {https://doi.org/10.1112/S0010437X08003709},
}

@article {JPPZ,
	AUTHOR = {Janda, F. and Pandharipande, R. and Pixton, A. and Zvonkine,
	D.},
	TITLE = {Double ramification cycles on the moduli spaces of curves},
	JOURNAL = {Publ. Math. Inst. Hautes \'{E}tudes Sci.},
	FJOURNAL = {Publications Math\'{e}matiques. Institut de Hautes \'{E}tudes
	Scientifiques},
	VOLUME = {125},
	YEAR = {2017},
	PAGES = {221--266},
	ISSN = {0073-8301},
	MRCLASS = {14H10 (14N35)},
	MRNUMBER = {3668650},
	MRREVIEWER = {Emily Clader},
	DOI = {10.1007/s10240-017-0088-x},
	URL = {https://doi.org/10.1007/s10240-017-0088-x},
}

@article {GLN,
	AUTHOR = {Giacchetto, Alessandro and Lewa\'{n}ski, Danilo and Norbury, Paul},
	TITLE = {An intersection-theoretic proof of the {H}arer-{Z}agier
	formula},
	JOURNAL = {Algebr. Geom.},
	FJOURNAL = {Algebraic Geometry},
	VOLUME = {10},
	YEAR = {2023},
	NUMBER = {2},
	PAGES = {130--147},
	ISSN = {2313-1691},
	MRCLASS = {14N10 (05A15 14H10 14H60)},
	MRNUMBER = {4562996},
	MRREVIEWER = {Trygve Johnsen},
	URL = {doi:10.14231/AG-2023-004},
	DOI = {10.14231/AG-2023-004},
}

@article {ELSV,
	AUTHOR = {Ekedahl, Torsten and Lando, Sergei and Shapiro, Michael and
	Vainshtein, Alek},
	TITLE = {Hurwitz numbers and intersections on moduli spaces of curves},
	JOURNAL = {Invent. Math.},
	FJOURNAL = {Inventiones Mathematicae},
	VOLUME = {146},
	YEAR = {2001},
	NUMBER = {2},
	PAGES = {297--327},
	ISSN = {0020-9910},
	MRCLASS = {14H30 (14H10 14N35)},
	MRNUMBER = {1864018},
	MRREVIEWER = {Ravi D. Vakil},
	DOI = {10.1007/s002220100164},
	URL = {https://doi.org/10.1007/s002220100164},
}

@article {Givental-semisimple,
	AUTHOR = {Givental, Alexander B.},
	TITLE = {Semisimple {F}robenius structures at higher genus},
	JOURNAL = {Internat. Math. Res. Notices},
	FJOURNAL = {International Mathematics Research Notices},
	YEAR = {2001},
	NUMBER = {23},
	PAGES = {1265--1286},
	ISSN = {1073-7928,1687-0247},
	MRCLASS = {53D45 (14N35)},
	MRNUMBER = {1866444},
	MRREVIEWER = {Gilberto\ Bini},
	DOI = {10.1155/S1073792801000605},
	URL = {https://doi.org/10.1155/S1073792801000605},
}

@article {Janda-P1,
	AUTHOR = {Janda, Felix},
	TITLE = {Relations on {$\overline M_{g,n}$} via equivariant
	{G}romov-{W}itten theory of {$\mathbb{P}^1$}},
	JOURNAL = {Algebr. Geom.},
	FJOURNAL = {Algebraic Geometry},
	VOLUME = {4},
	YEAR = {2017},
	NUMBER = {3},
	PAGES = {311--336},
	ISSN = {2313-1691,2214-2584},
	MRCLASS = {14H10 (14N35)},
	MRNUMBER = {3652083},
	MRREVIEWER = {Martijn\ Kool},
	DOI = {10.14231/AG-2017-018},
	URL = {https://doi.org/10.14231/AG-2017-018},
}

@misc{alexandrov2025newspinpolynomialrelations,
	title={A new spin on polynomial relations among kappa classes}, 
	author={Alexander Alexandrov and Boris Bychkov and Petr Dunin-Barkowski and Maxim Kazarian and Sergey Shadrin},
	year={2025},
	eprint={2508.17865},
	archivePrefix={arXiv},
	primaryClass={math.AG},
	url={https://arxiv.org/abs/2508.17865}, 
}

@misc{blot2025cohomologicalrepresentationsquantumtau,
	title={Cohomological representations of quantum tau functions}, 
	author={Xavier Blot and Danilo Lewański and Sergey Shadrin},
	year={2025},
	eprint={2411.03499},
	archivePrefix={arXiv},
	primaryClass={math.AG},
	url={https://arxiv.org/abs/2411.03499}, 
}

@misc{BLS-inprep,
	title={A new integrable system associated to a cohomological field theory with a unit (tentative title)}, 
	author={Xavier Blot and Danilo Lewański and Sergey Shadrin},
	year={2026},
	addendum={In preparation.},
}

@article {Bini,
	AUTHOR = {Bini, Gilberto},
	TITLE = {Generalized {H}odge classes on the moduli space of curves},
	JOURNAL = {Beitr\"age Algebra Geom.},
	FJOURNAL = {Beitr\"age zur Algebra und Geometrie. Contributions to Algebra
	and Geometry},
	VOLUME = {44},
	YEAR = {2003},
	NUMBER = {2},
	PAGES = {559--565},
	ISSN = {0138-4821},
	MRCLASS = {14H10 (14C40)},
	MRNUMBER = {2017857},
	MRREVIEWER = {John\ B.\ Little},
	url = {http://eudml.org/doc/227725},
}

@article {GiaKraLew,
	AUTHOR = {Giacchetto, Alessandro and Kramer, Reinier and Lewa\'{n}ski,
	Danilo},
	TITLE = {A new spin on {H}urwitz theory and {ELSV} via theta
	characteristics},
	JOURNAL = {Selecta Math. (N.S.)},
	FJOURNAL = {Selecta Mathematica. New Series},
	VOLUME = {31},
	YEAR = {2025},
	NUMBER = {5},
	PAGES = {Paper No. 90, 83},
	ISSN = {1022-1824,1420-9020},
	MRCLASS = {14N10 (05A15 14H10 14N35 81R12)},
	MRNUMBER = {4963700},
	DOI = {10.1007/s00029-025-01077-y},
	URL = {https://doi.org/10.1007/s00029-025-01077-y},
}

@incollection {OrlovSch-1,
	AUTHOR = {Orlov, A. Yu. and Scherbin, D. M.},
	TITLE = {Multivariate hypergeometric functions as {$\tau$}-functions of
	{T}oda lattice and {K}adomtsev-{P}etviashvili equation},
	NOTE = {Advances in nonlinear mathematics and science},
	JOURNAL = {Phys. D},
	FJOURNAL = {Physica D. Nonlinear Phenomena},
	VOLUME = {152/153},
	YEAR = {2001},
	PAGES = {51--65},
	ISSN = {0167-2789,1872-8022},
	MRCLASS = {37K20 (33C80 33D80 37K05 37K30 39A12)},
	MRNUMBER = {1837897},
	MRREVIEWER = {Piotr\ G.\ Grinevich},
	DOI = {10.1016/S0167-2789(01)00158-0},
	URL = {https://doi.org/10.1016/S0167-2789(01)00158-0},
}

@article {OrlovSch-2,
	AUTHOR = {Orlov, A. Yu. and Shcherbin, D. M.},
	TITLE = {Hypergeometric solutions of soliton equations},
	JOURNAL = {Teoret. Mat. Fiz.},
	FJOURNAL = {Teoreticheskaya i Matematicheskaya Fizika},
	VOLUME = {128},
	YEAR = {2001},
	NUMBER = {1},
	PAGES = {84--108},
	ISSN = {0564-6162,2305-3135},
	MRCLASS = {37K10 (33C70 35Q51 35Q53)},
	MRNUMBER = {1904047},
	MRREVIEWER = {Alexander\ V.\ Shapovalov},
	DOI = {10.1023/A:1010402200567},
	URL = {https://doi.org/10.1023/A:1010402200567},
}

@article {alexandrov2025blobbedtopologicalrecursionkp,
	AUTHOR = {Alexandrov, A. and Bychkov, B. and Dunin-Barkowski, P. and
	Kazarian, M. and Shadrin, S.},
	TITLE = {Blobbed topological recursion and {KP} integrability},
	JOURNAL = {Selecta Math. (N.S.)},
	FJOURNAL = {Selecta Mathematica. New Series},
	VOLUME = {32},
	YEAR = {2026},
	NUMBER = {2},
	PAGES = {Paper No. 25},
	ISSN = {1022-1824,1420-9020},
	MRCLASS = {37K10 (14H70 14H81 81T45)},
	MRNUMBER = {5039140},
	DOI = {10.1007/s00029-026-01135-z},
	URL = {https://doi.org/10.1007/s00029-026-01135-z},
}

@article {Arnold,
	AUTHOR = {Arnol'd, V. I.},
	TITLE = {Topological classification of complex trigonometric
	polynomials and the combinatorics of graphs with an identical
	number of vertices and edges},
	JOURNAL = {Funktsional. Anal. i Prilozhen.},
	FJOURNAL = {Funktsional\cprime ny\u{\i} Analiz i ego Prilozheniya},
	VOLUME = {30},
	YEAR = {1996},
	NUMBER = {1},
	PAGES = {1--17, 96},
	ISSN = {0374-1990,2305-2899},
	MRCLASS = {32S50 (05C30 14E20 20F36)},
	MRNUMBER = {1387484},
	MRREVIEWER = {Aleksandr\ G.\ Aleksandrov},
	DOI = {10.4213/faa501},
	URL = {https://doi.org/10.4213/faa501},
}

@article {KMMM,
	AUTHOR = {Kharchev, S. and Marshakov, A. and Mironov, A. and Morozov,
	A.},
	TITLE = {Generalized {K}azakov-{M}igdal-{K}ontsevich model: group
	theory aspects},
	JOURNAL = {Internat. J. Modern Phys. A},
	FJOURNAL = {International Journal of Modern Physics A. Particles and
	Fields. Gravitation. Cosmology. Astrophysics. Accelerator
	Physics},
	VOLUME = {10},
	YEAR = {1995},
	NUMBER = {14},
	PAGES = {2015--2051},
	ISSN = {0217-751X,1793-656X},
	MRCLASS = {81T30 (58F07 81T40)},
	MRNUMBER = {1332645},
	MRREVIEWER = {Henrik\ Aratyn},
	DOI = {10.1142/S0217751X9500098X},
	URL = {https://doi.org/10.1142/S0217751X9500098X},
}

@article {Lagrange,
	AUTHOR = {Gessel, Ira M.},
	TITLE = {Lagrange inversion},
	JOURNAL = {J. Combin. Theory Ser. A},
	FJOURNAL = {Journal of Combinatorial Theory. Series A},
	VOLUME = {144},
	YEAR = {2016},
	PAGES = {212--249},
	ISSN = {0097-3165,1096-0899},
	MRCLASS = {05A15 (05A19)},
	MRNUMBER = {3534068},
	MRREVIEWER = {L\'aszl\'o\ T\'oth},
	DOI = {10.1016/j.jcta.2016.06.018},
	URL = {https://doi.org/10.1016/j.jcta.2016.06.018},
}

@unpublished{Zvonkine,
	title={A preliminary text on the r-{ELSV} formula},
	author={Zvonkine, Dimitri},
	year={2006},
	note={Preprint},
}

\end{document}